\begin{document}
\title{Analytic twists of modular forms}
\author{Alexandre Peyrot}

\maketitle

\def \l {{\lambda}}
\def \a {{\alpha}}
\def \b {{\beta}}
\def \f {{\phi}}
\def \r {{\rho}}
\def \R {{\mathbb R}}
\def \H {{\mathbb H}}
\def \N {{\mathbb N}}
\def \C {{\mathbb C}}
\def \Z {{\mathbb Z}}
\def \F {{\Phi}}
\def \Q {{\mathbb Q}}
\def \e {{\epsilon }}
\def\GL{\ensuremath{\mathop{\textrm{\normalfont GL}}}}
\def\SL{\ensuremath{\mathop{\textrm{\normalfont SL}}}}
\def\Gal{\ensuremath{\mathop{\textrm{\normalfont Gal}}}}
\def\SU{\ensuremath{\mathop{\textrm{\normalfont SU}}}}
\def\SO{\ensuremath{\mathop{\textrm{\normalfont SO}}}}

\newtheorem{prop}{Proposition}
\newtheorem{claim}{Claim}
\newtheorem{lemma}{Lemma}
\newtheorem{thm}{Theorem}
\newtheorem{ktf}{\textit{Kuznetsov Trace Formula}}
\newtheorem{defn}{Definition}

\theoremstyle{definition}
\newtheorem{exmp}{Example}

\theoremstyle{remark}
\newtheorem{rmk}{Remark}

\newcommand{\mods}[1]{\,(\mathrm{mod}\,{#1})}

\begin{abstract}
We investigate non-correlation of Fourier coefficients of Maass forms against a class of real oscillatory functions, in analogy to known results with Frobenius trace functions. We also establish an equidistribution result for twisted horocycles as a consequence of our non-correlation result.
\end{abstract}

\setcounter{tocdepth}{1}
\tableofcontents

\section{Introduction}
In this paper we are interested in sums of Fourier coefficients of $\GL_2$ Maass forms against a certain class of oscillatory functions. The type of oscillatory functions we consider can be thought as archimedean analogs of trace functions studied by Fouvry, Kowalski and Michel in \cite{FKM}. Our main result gives a non-correlation statement between Fourier coefficients of Maass forms against a family of functions, $K_t: \R_{>0}\rightarrow \C$, depending on a large real parameter $t$.

\subsection{Setup}
We let throughout $f$ be a fixed cuspidal Maass Hecke eigenform for $\SL_2(\Z)$, and denote by $1/4 +t_f^2$ the associated eigenvalue of the Laplacian. The form $f$ admits a Fourier expansion
$$f(z)= \sum_{n\not= 0} \r_f(n)|n|^{-1/2}W_{it_f}(4\pi |n| y) e(nx),$$
where $W_\nu$ is a Whittaker function,
$$W_{it} (y) = \frac{e^{-y/2}}{\Gamma\left(\frac{1}{2}+it\right)} \int_0^\infty e^{-x} x^{it-\frac{1}{2}} \left(1+\frac{x}{y}\right)^{it-\frac{1}{2}} \mathrm{d}x.$$
The Fourier coefficients, $\r_f(n)$, are normalized so that by Rankin-Selberg,
\begin{equation}\label{rs}
\sum_{n\leq X} |\r_f(n)|^2 \asymp X.
\end{equation}
We moreover know that the Fourier coefficients oscillate substantially. For example, the following estimate
\begin{equation}\label{addosci}
\sum_{n\leq x} \r_f(n)e(\alpha n) \ll_f x^{1/2 + \epsilon}
\end{equation}
holds for any $\epsilon >0$ uniformly for all $\alpha\in \R$ (see \cite{Iwaspec} theorem 8.1). In order to understand better the oscillatory nature of the Fourier coefficients, we make the following definition.
\begin{defn}
Let $(K(n))_{n\in \N}$ be a bounded sequence of complex numbers. We say that $(K(n))$ does \emph{not} correlate with $(\r_f(n))$ if we have 
$$\sum_{n\leq x} \r_f(n) K(n) \ll_{f,A} x(\log x)^{-A},$$
for all $A\geq 1, x>1$. 
\end{defn}
Non-correlation statements are therefore a way to measure the extent to which the oscillations of a given sequence ``lines up" with the oscillations of the Fourier coefficients. For example, (\ref{addosci}) gives a non-correlation statement for the additive twist $K(n)=e(\alpha n)$ with a power saving of $1/2 - \epsilon$. Another important example of non-correlation arises when $K(n) = \mu(n)$, the M\"obius function, in which case non-correlation is an incarnation of the Prime Number Theorem (see \cite{FouvryGangulyPNT} for a general result combining this and additive twists). Obtaining power saving statements against the M\"obius function would be equivalent to proving a strong zero-free region towards the Riemann Hypothesis for the $L$-function attached to $f$. We give here a final example, which will be the main motivation for our work: let $p$ be a prime number and let $K$ be an isotypic trace function of conductor $p$, then \cite{FKM} gives a non-correlation result for $(K(n))$ with a  power saving of $1/8 - \epsilon$. 

We will study non-correlation against a family of functions $(K_t)_{t\in \R}$,
$$K_t : \R_{>0} \rightarrow \C,$$
where $t$ is a parameter which we will let grow to infinity. 
\begin{defn}\label{anatrace}
A family of smooth functions $(K_t)_{t\in \R}, K_t:\R_{>0} \rightarrow \C$ is called a family of analytic trace functions if there exist real numbers $a<b, b> 0$ and a family of analytic functions $(M_t(s))_{t\in \R}$ in the strip $a<\Re(s)<b$, such that the following conditions hold. 
\begin{enumerate}[1.]
\item The following integral converges for any $a<\sigma<b$,
\begin{equation}\label{mellintrace}
\frac{1}{2\pi i} \int_{(\sigma)} M_t(s) x^{-s} \mathrm{d}s,
\end{equation}
and is equal to $K_t(x)$ for all $x\in \R_{>0}, t\in \R$.
\item There exist constants $c_1,c_2$ depending on the family $(K_t)_{t\in \R}$ , independent of $t$, such that we may write $M_t(\sigma +i\nu) = g_t(\sigma +i\nu) e(f_t(\sigma+i\nu)),$ in such a way that for all $x\in [t,2t]$, the following 
\begin{equation}\label{condg}
g_t^{(j)}(\sigma + i\nu) \ll_j \nu^{\sigma-1/2-j} \hspace{1 cm} \forall j\geq 0,
\end{equation}
holds, as well as the following conditions on $f_t$.
\begin{enumerate}
\item  
Whenever $|\nu|\leq c_1t$ or $|\nu| \geq c_2 t$, we have
\begin{equation}\label{condfone}
\left|f_t'(\sigma + i\nu) - \frac{1}{2\pi} \log(x)\right| \gg 1.
\end{equation}

\item When $c_1 t\leq |\nu| \leq c_2 t$, either (\ref{condfone}) holds, or we have

\begin{equation}\label{condftwo}
f_t''(\sigma+i\nu) \gg \nu^{-1},
\end{equation}
while for all $\epsilon>0,$  $j\geq 0$,
\begin{equation}\label{condf}
f_t^{(j)} (\sigma + i\nu) \ll_{j,\epsilon} \nu^{1+\epsilon-j}.
\end{equation}

\item Finally, we require that
\begin{equation}\label{condad}
f_t''(\sigma+i\nu) - \frac{1}{2\pi \nu} \gg \nu^{-1},
\end{equation}
whenever $c_1 t\leq |\nu| \leq c_2 t$.
\end{enumerate}
\end{enumerate}

\end{defn}

\begin{rmk}
Throughout the paper, we will abuse notation and say that $K_t$ is an analytic trace function when it arises as part of such a family.
\end{rmk}

\begin{rmk}
Conditions (\ref{mellintrace}) - (\ref{condf}) guarantee by means of stationary phase that the integral representation is concentrated around multiplicative character of conductor $t$. Condition (\ref{condad}) ensures that we avoid functions such as $e(x)$, as motivated in Section \ref{sectex}.
\end{rmk}

\begin{rmk}
By the properties of the Mellin transform, we note that if $K_t(x)$ is an analytic trace function, then for any constant $\alpha \in \R_{>0},$ we have that $K_t(\alpha x)$ is also an analytic trace function.
\end{rmk}

\begin{rmk}
We note that in interesting examples, in conjunction with condition (\ref{condfone}), we will also have some stationary points in the region $c_1 t \leq |\nu| \leq c_2 t$, guaranteeing that $||K_t||_\infty \asymp 1$.
\end{rmk}

\begin{rmk}
We note that in practice, we may always ensure that condition (\ref{mellintrace}) holds, by studying $K_t(x) V\left(\frac{x}{t}\right),$ where $V$ is a smooth compactly supported function in $[\frac{1}{2},2]$. In that case, $M_t(s)$ is given by $\int_0^\infty K_t(x)x^{s-1}\mathrm{d}x,$ and the integral in (\ref{mellintrace}) converges absolutely.
\end{rmk}

We give here some examples of analytic trace functions (see Section \ref{sectex} for proofs).  
\begin{exmp}
The normalized $J$-Bessel function of order $t$,
$$F_{it}(x) := t^{1/2} \Gamma\left(\frac{1}{2}+it\right) J_{it}(x),$$
is an analytic trace function. 
\end{exmp}
This should be thought of as an archimedean analog of Kloosterman sums. We now give as a second example that of higher rank Bessel functions as appearing in \cite{HighBessel}, in analogy to hyper-Kloosterman sums. 
\begin{exmp}
For any $n\geq 3$, the $n$-th rank Bessel function of order $t$,
$$J_{n,t} := \frac{t^{\frac{n-1}{2}}}{2\pi in} \int_{(\frac{1}{4})} \Gamma\left(\frac{s-int}{n}\right) \Gamma\left(\frac{s}{n}+ \frac{it}{n-1}\right)^{n-1} e\left(\frac{s}{4}\right) x^{-s}\mathrm{d}s,$$
is an analytic trace function.
\end{exmp}

We will study sums of the shape
$$S(t) := \sum_n \r_f(n) K_t(n) V\left(\frac{n}{t}\right),$$
where $K_t$ is an analytic trace function and $V$ is a smooth function supported in $[1,2]$ and such that $V^{(j)}(x) \ll_j 1$. for convenience we also normalize $V$ so that $\int V(y) \mathrm{d}y =1$. We will show in Section \ref{anak} that any analytic trace function, $K_t$, satisfies $||K_t||_\infty \ll 1$, so that by Cauchy-Schwarz and (\ref{rs}),  we have that
$$S(t) \ll t.$$
Our main result improves on that bound.

\begin{thm}\label{thm1}
Let $K_t:\R \rightarrow \C$ be an analytic trace function. We have
$$S(t) \ll t^{1- 1/8 + \epsilon},$$
where the implicit constant depends only on  $f,\epsilon$ and on $||K_t||_\infty$. 
\end{thm}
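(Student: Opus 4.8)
The plan is to attack $S(t)$ by the "delta-method / amplification via Voronoi" strategy familiar from non-correlation problems, but adapted to the archimedean setting. First I would open up $K_t(n)$ using its Mellin representation (\ref{mellintrace}): writing $K_t(n) = \frac{1}{2\pi i}\int_{(\sigma)} M_t(s) n^{-s}\,\mathrm{d}s$ and using the factorization $M_t(\sigma+i\nu) = g_t(\sigma+i\nu)e(f_t(\sigma+i\nu))$, the sum $S(t)$ becomes an integral over $\nu$ of a twisted sum $\sum_n \r_f(n) n^{-\sigma-i\nu} e(f_t(\sigma+i\nu)) V(n/t)$. By the stationary phase heuristic recorded in the remarks, the decay estimate (\ref{condg}) on $g_t$ together with the growth bound (\ref{condf}) on $f_t$ confines the effective range of $\nu$ to a dyadic window $|\nu|\asymp t$, and in that window $K_t(n)$ behaves like an oscillatory function $\asymp n^{i\nu}$ of "analytic conductor" $t$. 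So morally $S(t)$ is a sum of $\r_f(n)$ against an additive-type twist whose frequency is governed by $f_t'$; the gist of conditions (\ref{condfone})–(\ref{condad}) is that this frequency is never close to $\frac{1}{2\pi}\log x$ (which would make it a genuine eigenvalue-matching resonance) except possibly on a short arc, where instead one has a clean second-derivative lower bound (\ref{condftwo}), (\ref{condad}) that makes stationary phase applicable.

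Next I would square and apply Cauchy–Schwarz to detach $\r_f(n)$ from the oscillatory weight, or — more efficiently — dualize via the Voronoi summation formula for $f$ on $\SL_2(\Z)$. Applying Voronoi to $\sum_n \r_f(n) e(f_t(\cdot))V(n/t)$ turns it into a dual sum $\sum_m \r_f(m) \widetilde{H}(m)$ where $\widetilde{H}$ is a Hankel-type transform (a Bessel transform of order $it_f$) of the smooth weight $x \mapsto e(f_t(\sigma+i\nu)) x^{-\sigma-i\nu} V(x/t)$. The point of Voronoi is that the Bessel kernel $J_{it_f}(4\pi\sqrt{mx})$ contributes its own phase $\sim 2\sqrt{mx}$, and combined with the phase of $f_t$ one performs stationary phase in $x$: the stationary point exists only when $m$ lies in a range dictated by $f_t'$, and condition (\ref{condfone}) (the separation of $f_t'$ from $\frac{1}{2\pi}\log x$) guarantees that the relevant dual length $m$ is either very short or genuinely in the "conductor-dropping" range. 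Tracking the sizes, one expects the dual sum to have length roughly $t^{\,\text{(something)}}/t \sim t^{1-1}$ after Voronoi shortens it, and then a further trivial bound with (\ref{rs}) should give the saving.

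Concretely, I anticipate the argument splits into the two regimes of Definition \ref{anatrace}: (i) when (\ref{condfone}) holds throughout — here repeated integration by parts in $x$ (no stationary point) makes $\widetilde{H}(m)$ negligible unless $m$ is tiny, and the surviving main term is bounded directly, yielding in fact strong savings; (ii) the resonant arc $c_1 t \le |\nu| \le c_2 t$ with (\ref{condftwo})/(\ref{condad}) — here stationary phase in $x$ produces an honest main term of the form $m^{1/2}\,t^{-1/2}\cdot(\text{phase})$ with $m$ ranging over an interval of length $\asymp t$, after which Cauchy–Schwarz in $m$ against (\ref{rs}) and a second application of Voronoi (or a large-sieve / Kuznetsov input — hence the statement of the Kuznetsov trace formula environment in the preamble) beats down the diagonal. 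The numerology $1/8$ is the hallmark of the "two Voronois then Cauchy–Schwarz, lose a square root each time" scheme that produces the $1/8$ in \cite{FKM}, and the off-diagonal terms will be controlled by the fact that condition (\ref{condad}) forbids $f_t$ from degenerating to the linear phase $e(x)$, which would otherwise resonate with the Bessel phase and destroy the saving.

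The main obstacle, I expect, is precisely the stationary-phase analysis on the resonant arc: one must show that after Voronoi the phase $f_t(\sigma+i\nu) - (\sigma+i\nu)\log x + 2\sqrt{mx}/(2\pi)$ (schematically) has a nondegenerate stationary point in $x$ with second derivative of exact order $\nu^{-1}$ — this is what (\ref{condftwo}) is for — and then that the resulting oscillatory integral, as a function of the new variables $m$ and $\nu$, still oscillates enough (in $m$ via (\ref{condad}), which separates $f_t''$ from the "trivial" value $\frac{1}{2\pi\nu}$ attached to $e(x)$) to permit a second Cauchy–Schwarz without a fatal loss. Making the dependence on $||K_t||_\infty$ explicit and uniform, and handling the transition zones $|\nu|\asymp c_1 t$, $|\nu|\asymp c_2 t$ where (\ref{condfone}) and (\ref{condftwo}) trade off, will be the fiddly technical core.
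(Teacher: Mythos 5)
Your opening paragraph is essentially correct and matches the paper: one opens $K_t$ via its Mellin representation, uses (\ref{condg})--(\ref{condf}) to localize the $\nu$-integral to $|\nu|\asymp t$, and interprets conditions (\ref{condftwo}), (\ref{condad}) as the non-degenerate stationary phase input. You have also correctly identified the role of (\ref{condad}) in separating $f_t$ from the linear phase $e(x)$.

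However, there is a genuine gap at the next step. You propose to apply Voronoi summation \emph{directly} to $\sum_n \r_f(n)n^{-\sigma-i\nu}V(n/t)$ and claim this shortens the dual sum. This does not work as stated: the $\GL_2$ Voronoi formula requires an additive character $e(an/c)$, and applied with $c=1$ it is merely the functional equation of $L(f\times|\cdot|^{i\nu},s)$. Since $\nu\asymp t$, the archimedean conductor of this $L$-function is $\asymp t^2$, so after dualizing a sum of length $t$ you obtain a dual sum still of length $\asymp t^2/t = t$ --- no saving. (Your estimate of dual length $t^{1-1}$ is therefore off by a factor $t$.) What is missing is precisely Munshi's conductor-dropping version of the Kloosterman circle method: the paper introduces an artificial second variable $m$ and a $v$-integral of length $K$, detects $n=m$ via the $\delta$-symbol with moduli $q\asymp Q=(t/K)^{1/2}$, then applies Poisson to the $m$-sum and Voronoi to the $n$-sum \emph{with the additive twist $e(n\bar m/q)$ produced by the circle method}. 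It is this interplay of the short modulus $q$ and the $v$-averaging of length $K$ that truncates the Voronoi-dual sum to length $\ll K t^\epsilon$. After Cauchy--Schwarz in the dual variable, a second Poisson, and the two-dimensional second-derivative bound (applied with the crucial nonvanishing from (\ref{condad})), one optimizes $K = t^{1/2}$, $Q=t^{1/4}$ to get the $t^{7/8}$ bound. Without the circle method there is no mechanism in your sketch to drop the conductor below $t$.

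Two smaller points: the Kuznetsov trace formula is not used anywhere in the argument (the \texttt{ktf} theorem environment in the preamble is declared but never invoked --- you inferred too much from it), and a plain Cauchy--Schwarz in the Voronoi-dual variable against (\ref{rs}) is not enough; one must open the square, apply Poisson again, and carefully separate the diagonal $n=0$ from the off-diagonal congruence $n\equiv q\overline{m'}-q'\overline{m}\pmod{qq'}$, which is where the final counting argument lives.
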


\begin{rmk}
For simplicity we have studied the case where $n\asymp t$. We note that for $N\leq t$, one may study similarly
$$Z(N):= \sum_n \r_f(n) K_t(n) V\left(\frac{n}{N}\right).$$
If for $x\asymp N$, conditions (\ref{condfone}) - (\ref{condad}) hold (which is the case in practice), we may show that
$$Z(N) \ll t^{1/2+\epsilon} N^{3/8},$$
which improves on the trivial bound so long as $N \gg t^{4/5+\epsilon}$. 
\end{rmk}

Our bound has an application to the geometric question of equidistribution of horocycle flows with respect to a twisted signed measure. Let us recall that for every continuous compactly supported function $f$ on $\SL_2(\Z)\backslash\H$, we have 
$$\int_0^1 f(x+iy) \mathrm{d}x \rightarrow \mu\left(\hbox{SL}_2(\Z)\backslash\H\right)^{-1}\int_{\SL_2(\Z)\backslash\H} f(z) \mathrm{d}\mu(z),$$
as $y\rightarrow 0$, where $\mu(z)=\frac{\mathrm{d}x\mathrm{d}y}{y^2}$ denotes the hyperbolic measure (see \cite{zaghoro}). In \cite{MR2068968} Str\"{o}mbergsson gives  a similar result by restricting to subsegments of hyperbolic length $y^{-1/2-\delta}$, i.e. that for any $\delta > 0$ and $f$ as above,
$$\frac{1}{\beta-\alpha} \int_\alpha^\beta f(x+iy) \mathrm{d}x \rightarrow \mu\left(\hbox{SL}_2(\Z)\backslash\H\right)^{-1}\int_{\SL_2(\Z)\backslash\H} f(z) \mathrm{d}\mu(z),$$
uniformly as $y \rightarrow 0$ so long as $\beta-\alpha$ remains bigger than $y^{1/2-\delta}$. We use Theorem \ref{thm1} to give the following twisted version of Str\"ombergsson's result, which is analogous to what is proven in \cite{FKM} for horocycles twisted by Frobenius trace functions.
\begin{thm}\label{horothm}
Let $(K_t)_{t\in \R}$ be a family of analytic trace functions. Let $f$ be a Maass form on $\SL_2(\Z)\backslash\H$, and $V$ be a smooth real valued function with compact support in $[\frac{1}{2},\frac{5}{2}]$ such that $V^{(j)}(x) \ll 1$, for all $j\geq 0$. We then have for any $\delta>0$,
$$\frac{1}{\beta-\alpha} \int_\alpha^\beta f(x+iy) K_{1/y}\left(\frac{x}{y}\right) V(x) \mathrm{d}x \rightarrow 0,$$
uniformly as $y \rightarrow 0$ so long as $\beta-\alpha$ remains bigger than $y^{1/8-\delta}$.
\end{thm}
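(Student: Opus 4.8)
The plan is to deduce Theorem \ref{horothm} from Theorem \ref{thm1} by spectrally expanding the Maass form along the horocycle and recognizing the resulting integrals as (essentially) the sums $S(t)$ after a shift of the variable $x$. First I would substitute $y = 1/t$ with $t \to \infty$, and open the Fourier expansion of $f$,
$$f(x+iy) = \sum_{n \neq 0} \r_f(n)|n|^{-1/2} W_{it_f}(4\pi |n| y) e(nx),$$
into the integral $\int_\alpha^\beta f(x+iy) K_{1/y}(x/y) V(x)\,\mathrm{d}x$. Since $W_{it_f}(4\pi |n| y)$ decays exponentially once $|n| y \gg 1$, i.e. $|n| \gg t$, only the terms with $|n| \ll t^{1+\e}$ contribute up to negligible error; on that range $W_{it_f}(4\pi|n|/t)$ is a smooth, slowly varying weight that can be absorbed (by a Mellin or Fourier decomposition) into the cutoff function without affecting the shape of the estimate. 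The constant term of $f$ vanishes since $f$ is cuspidal, which is what lets us avoid the main-term contribution present in Strömbergsson's untwisted statement.

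Next I would deal with the twisting factor $K_{1/y}(x/y) = K_t(tx)$ and the compact $x$-integration. Writing the integral over $[\alpha,\beta]$ via a smooth partition of unity into intervals, and using additive reciprocity / a change of variables $u = tx$, each piece becomes a sum of the form $\sum_n \r_f(n) e(n\xi) K_t(n + \text{shift})\,\widetilde V(n/t)$ for suitable phases $\xi$ and smooth weights $\widetilde V$ supported in a dyadic range around $t$. By the remark following Definition \ref{anatrace}, $K_t(\alpha x)$ is again an analytic trace function, and one checks that multiplying by $e(n\xi)$ and translating the argument by a bounded amount preserves membership in the family of analytic trace functions (this only perturbs $f_t$ by a linear term in $\nu$ and by a bounded quantity, which is compatible with conditions (\ref{condfone})–(\ref{condad}) — here the robustness built into the definition via the parameters $c_1, c_2$ and the $\gg 1$ lower bounds does the work). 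Thus each piece is bounded by Theorem \ref{thm1} as $\ll t^{1 - 1/8 + \e} = t^{7/8 + \e}$.

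Summing over the $O(1)$ (or mildly many, after the dyadic/smooth decomposition of the Whittaker weight, $O(t^\e)$) pieces gives
$$\int_\alpha^\beta f(x+iy) K_{1/y}\!\left(\tfrac{x}{y}\right) V(x)\,\mathrm{d}x \ll t^{-1} \cdot t^{7/8 + \e} = t^{-1/8 + \e} = y^{1/8 - \e},$$
where the extra factor $t^{-1} = y$ comes from the normalization $|n|^{-1/2}$ and the change of variables Jacobian (equivalently, from matching $\sum_{n \asymp t}$ against an integral of length $\asymp 1$). Dividing by $\beta - \alpha \geq y^{1/8 - \delta}$ yields a bound of $y^{(1/8-\e) - (1/8-\delta)} = y^{\delta - \e}$, which tends to $0$ as $y \to 0$ provided we choose $\e < \delta$. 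I expect the main obstacle to be the bookkeeping in the second step: verifying carefully that the additive twist $e(n\xi)$ and the bounded translation of the argument of $K_t$ — together with the absorption of the Whittaker weight $W_{it_f}(4\pi n/t)$ — really do produce a bona fide analytic trace function (or a short sum of them) to which Theorem \ref{thm1} applies with a uniform implied constant, since the conditions (\ref{condfone})–(\ref{condad}) are phrased in terms of the derivatives of $f_t$ and one must track how these transform. The analytic input (Theorem \ref{thm1}) and the exponential decay of $W_{it_f}$ are the substantive ingredients; everything else is decomposition and change of variables.
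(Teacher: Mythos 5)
Your high-level plan --- open the Fourier expansion of $f$, truncate using the decay of $W_{it_f}$, recognize the inner $x$-integral as a function of $n$ to which Theorem~\ref{thm1} applies, and do the bookkeeping to land on $y^{1/8-\e}/(\beta-\alpha)$ --- matches the paper, and your final bookkeeping is correct. The gap is the crucial middle step. Your claim that, after a change of variables and ``additive reciprocity,'' the inner integral $\int K_t(tx)e(nx)V(x)\,\mathrm{d}x$ becomes something of the shape $e(n\xi)K_t(n+\text{shift})\widetilde V(n/t)$ is not right, and the mechanism you propose for preserving membership in the class (``bounded translation of the argument and a linear perturbation of $f_t$'') would not work even if it were: the Fourier transform of $K_t$ is not $K_t$ evaluated at a shifted point times a character, and multiplying by $e(n\xi)$ with $\xi\asymp1$ on the range $n\asymp t$ adds a phase of conductor $\asymp t$, the same order as the conductor of $K_t$ --- this is not a small perturbation of $f_t$, and indeed condition~(\ref{condad}) is there precisely to keep the class away from such characters $e(\alpha x)$.

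What the paper actually proves, and what you would need to supply, is Proposition~\ref{ftrsf}: a closure-under-Fourier-transform statement asserting that $\hat K_t(x):=t^{1/2}\int_1^2 K_t(tu)V(u)e(-xu)\,\mathrm{d}u$ equals, up to an $O(t^{-1/2})$ error, a genuinely \emph{new} analytic trace function $\tilde K_t$ whose Mellin phase is $\tilde f_t(1-\sigma+i\nu)=\frac{\nu}{2\pi}\log(t\nu)+f_t(\sigma-i\nu)$. The proof passes through the Mellin representation of $K_t$, applies stationary phase to $V^\dagger$, and then verifies (\ref{condg})--(\ref{condad}) for $\tilde K_t$; in doing so one finds that (\ref{condftwo}) and (\ref{condad}) are exchanged, with (\ref{condad}) for $f_t$ producing (\ref{condftwo}) for $\tilde f_t$ and vice versa. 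This is the substantive content and it cannot be reduced to a shift of argument. Once you have that lemma, writing $y^{-1/2}\int_\alpha^\beta K_{1/y}(x/y)e(nx)V(x)\,\mathrm{d}x$ via Fourier inversion as $\frac{1}{2\pi i}\int_{\R}\hat K_{1/y}(z+n)\frac{e(\beta z)-e(\alpha z)}{z}\,\mathrm{d}z$, restricting to $z+n\asymp y^{-1}$, and applying Theorem~\ref{thm1} to $\tilde K_{1/y}$ completes the proof exactly along the lines you intended.
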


\subsection{Outline of proof of Theorem 1}
We will show in Section \ref{anak} that our definition of analytic trace function implies that we may essentially write 
$$K_t(x) = \frac{1}{2\pi} \int_{\nu \asymp t} g_t(\sigma+i\nu) e(f_t(\sigma+i\nu)) x^{-\sigma-i\nu} \mathrm{d}\nu.$$
Interchanging order of summation and integration, we may therefore write
$$S(t) = \frac{1}{2\pi} \int_{\nu \asymp t} g_t(\sigma+i\nu)e(f_t(\sigma+i\nu)) \sum_{n=1}^\infty \r_f(n)n^{-\sigma-i\nu} V\left(\frac{n}{t}\right) \mathrm{d}\nu.$$
We then adapt the circle method of Munshi, as in \cite{MR3369905}, allowing us to write the inner sum essentially as
$$\frac{1}{K}\int_K^{2K} \sum_{q \asymp Q} \sum_{\substack{a\asymp Q\\ (a,q)=1}}\frac{1}{aq} \sum_{n\asymp t} \r_f(n) n^{iv} e\left(\frac{n\bar{a}}{q} - \frac{nx}{aq}\right) \sum_{m\asymp t} m^{-i(\nu+v)} e\left(-\frac{m\bar{a}}{q} +\frac{mx}{aq}\right) \mathrm{d}v,$$
where $K\leq t$ is a parameter that will ultimately be chosen optimally to be $K=t^{1/2}$, and $Q=(t/K)^{1/2}$. We may now apply Poisson summation to the $m$-sum, and Voronoi summation to the $n$-sum to arrive at the following expression for $S(t)$,
$$ \sum_{n\ll K} \frac{\r_f(n)}{\sqrt{n}} \sum_{q\asymp Q} \sum_{\substack{(m,q)=1\\ 1\leq |m| \ll q}} e\left(\frac{n\bar{m}}{q}\right) \int_{-K}^K\int_{\nu\asymp t}n^{-i\tau/2} g(q,m,\tau,\nu)e(f(q,m,\tau,\nu)) \mathrm{d}\nu \mathrm{d}\tau,$$
where $g$ is a non-oscillatory amplitude function of size $K$ and $f$ is a well understood phase. In particular, we note that (\ref{condad}) implies that $f''(q,m,\tau,\nu) \gg |\nu|^{-1}$, so that we may use second derivative bounds for multivariable integrals and save in the integral. Applying the Cauchy-Schwarz inequality to get rid of the Fourier coefficients, and using the second derivative bound to save $(Kt)^{1/2}$ in the integral, we arrive at
\begin{align*}
S(t) &\ll Kt^{1/4} \left(\sum_{q,q' \asymp Q} \sum_{m,m' \asymp Q}\left(\frac{Q^{-2}}{K^{1/2}} +   \sum_{\substack{n \asymp t\\ n \equiv q\overline{m'} - q'\overline{m} \mod qq'}} \frac{1}{K^{3/2}|n|^{1/2}}\right)\right)^{1/2}\\
&\ll K^{1/4}t^{3/4} + \frac{t}{K^{1/4}},
\end{align*}
which upon taking $K= t^{1/2}$ gives the desired result.

\subsection{Notations}
Throughout the paper, we will let $f(x) \ll g(x),$ $f(x) \gg g(x)$ and $f(x) = O(g(x))$ denote the usual Vinogradov symbols. The notation $f(x) \asymp g(x)$ will be used to mean that both $f(x) \ll g(x)$ and $g(x) \ll f(x)$ hold. Moreover, any subscript in these notations will be taken to mean that the implied constants are allowed to depend on those parameters. The notation $\bar{a} \mods q$ will always be used to denote the multiplicative inverse of $a$ modulo $q$.

\subsection{Acknowledgements}
I would like to thank Philippe Michel for suggesting this problem to me and for the guidance received throughout this project. I am also very grateful for the numerous enlightening conversations with Ian Petrow. This paper benefited from suggestions and comments from Pierre Le Boudec,  Ramon M. Nunes and Paul Nelson.

\section{Stationary phase integrals}
Throughout the paper, we will need several stationary phase lemmas to estimate oscillatory integrals. In particular, we will regularly be faced with a special kind of oscillatory integral which we now define. Let $W$ be any smooth real valued function, with support in $[a,b] \subset (0,\infty)$, and such that $W^{(j)}(x) \ll_{a,b,j} 1$. We then define 
$$W^\dagger(r,s):= \int_0^\infty W(x)e(-rx)x^{s-1}\mathrm{d}x,$$
where $r\in \R$ and $s\in \C$. Munshi gives in \cite{MR3369905} estimations and asymptotics for $W^\dagger$, however we will also need a slightly more precise version of this asymptotic. To this purpose, we quote from \cite{BKY} a version of the stationary lemma. 
\begin{lemma}\label{BKY}
Let $0<\delta<1/10,$ and $X,Y,V, V_1,Q>0, Z:= Q+X+Y+V_1+1$, and assume that
 $$Y\geq Z^{3\delta}, V_1\geq V \geq \frac{QZ^{\delta/2}}{Y^{1/2}}.$$ 
Suppose that $w$ is a smooth function on $\R$ with support on an interval $[a,b]$ of finite length $V_1$, satisfying
$$w^{(j)}(t) \ll_{j} XV^{-j},$$
 for all $j\geq 0$. Suppose that $h$ is a smooth function on $[a,b]$, such that there exists a unique point $t_0$ in the interval such that $h'(t_0)=0,$ and furthermore that 
\begin{align*}
h^{''}(t) \gg \frac{Y}{Q^2}, &\, h^{(j)}(t) \ll_j \frac{Y}{Q^{j}}, & \hbox{ for } j= 1,2,3,\cdots, t \in [a,b].  
\end{align*} 
Then, the integral defined by 
$$I:= \int_{-\infty}^\infty w(t) e^{ih(t)}\mathrm{d}t$$
has an asymptotic expansion of the form 
$$I= \frac{e^{ih(t_0)}}{\sqrt{h^{''}(t_0)}} \sum_{n\leq 3 \delta^{-1}A} p_n(t_0) + O_{A,\delta}( Z^{-A}),$$
and 
\begin{equation}\label{pn}
p_n(t_0):= \frac{\sqrt{2\pi}e^{\pi i/4}}{n!} \left(\frac{i}{2h^{''}(t_0)}\right)^n G^{(2n)}(t_0),
\end{equation}
where $A$ is arbitrary, and 
\begin{equation}\label{G}
G(t):= w(t) e^{iH(t)}; H(t) = h(t)-h(t_0) - \frac{1}{2} h^{''}(t_0) (t-t_0)^2.
\end{equation}
Furthermore, each $p_n$ is a rational function in $h',h'',\cdots,$ satisfying
\begin{equation}\label{pcontrol}
\frac{\mathrm{d}^j}{\mathrm{d}t_0^j}p_n(t_0)\ll_{j,n} X\left(V^{-j}+Q^{-j}\right)\left((V^2Y/Q^2)^{-n}+Y^{-n/3}\right).
\end{equation}
\end{lemma}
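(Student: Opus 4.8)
The plan is to follow the classical stationary-phase method with a full asymptotic expansion (as in the appendix of \cite{BKY}, going back to Huxley): localize to a short interval around the unique stationary point $t_0$, complete the square to peel off a Gaussian, evaluate the resulting Gaussian integral against the smooth amplitude by Fourier inversion, and truncate the Taylor series that arises, bounding the tail by means of the hypotheses on $w$ and $h$.

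First I would split $w=w_0+w_1$ by a smooth partition of unity, with $w_0$ supported in $|t-t_0|\le L$ and $w_1$ supported in $|t-t_0|\ge L/2$, where $L\asymp QZ^{\delta/4}Y^{-1/2}$ — chosen above the natural width $QY^{-1/2}$ of the stationary bump but still below $V$, which is legitimate precisely because of the hypothesis $V\ge QZ^{\delta/2}Y^{-1/2}$. On the support of $w_1w$ one has $|h'(t)|=|h'(t)-h'(t_0)|\gg (Y/Q^2)|t-t_0|\gg Y^{1/2}Z^{\delta/4}/Q$, while $h^{(j)}\ll Y/Q^j$ and $w_1w$ (together with $1/h'$) varies on a scale $\gg L$; each integration by parts then multiplies the effective amplitude by $\ll |h'|^{-1}(L^{-1}+|h''|/|h'|)\ll Z^{-\delta/2}$, so after $O_{A,\delta}(1)$ steps one gets $\int w_1(t)w(t)e^{ih(t)}\,\mathrm{d}t\ll_A Z^{-A}$ (using $Y\ge Z^{3\delta}$ to absorb the derivatives of $w$, and $V_1\le Z$ to bound the length of the range). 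Thus, up to $O_A(Z^{-A})$, we may replace $w$ by $w_0w$ and work on an interval of length $O(L)$. On that interval I would write $h(t)=h(t_0)+\tfrac12 h''(t_0)(t-t_0)^2+H(t)$ with $H$ as in (\ref{G}); then $H(t_0)=H'(t_0)=H''(t_0)=0$, $H^{(j)}(t_0)=h^{(j)}(t_0)\ll Y/Q^j$ for $j\ge 3$, and $|H(t)|\ll YL^3/Q^3=O(Z^{-3\delta/4})$ on the localized range. Factoring $e^{ih}=e^{ih(t_0)}e^{\frac i2 h''(t_0)(t-t_0)^2}e^{iH}$ and putting $G=we^{iH}$ as in (\ref{G}), the integral becomes $I=e^{ih(t_0)}\int G(t)\,e^{\frac i2 h''(t_0)(t-t_0)^2}\,\mathrm{d}t+O_A(Z^{-A})$, the cutoff $w_0$ being harmless to restore once more.

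Next I would evaluate $\int G(t)e^{\frac i2 a(t-t_0)^2}\,\mathrm{d}t$ with $a=h''(t_0)>0$ by Plancherel. Shifting $t_0$ to the origin and using that the Fourier transform of $t\mapsto e^{iat^2/2}$ equals $\sqrt{2\pi/a}\,e^{i\pi/4}e^{-i\xi^2/(2a)}$, this integral is $\frac{e^{i\pi/4}}{\sqrt{2\pi a}}\int \widehat G(\xi)e^{-i\xi^2/(2a)}\,\mathrm{d}\xi$. Expanding $e^{-i\xi^2/(2a)}=\sum_{n\le N}\frac1{n!}\big(\tfrac{-i}{2a}\big)^n\xi^{2n}+O(\cdots)$ with $N=3\delta^{-1}A$ and using $\frac1{2\pi}\int\widehat G(\xi)\xi^{2n}\,\mathrm{d}\xi=(-1)^n G^{(2n)}(0)$, one recovers exactly the main term $\frac{e^{ih(t_0)}}{\sqrt{h''(t_0)}}\sum_{n\le N}p_n(t_0)$ with $p_n$ as in (\ref{pn}) — the sign $e^{i\pi/4}$ being correct because $h''(t_0)>0$. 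The remainder is controlled by the decay of $\widehat G$ (equivalently, by bounds on $G^{(m)}$) weighed against the gain $(a^{-1}L^{-2})^{-n}\asymp(L^2Y/Q^2)^{-n}$ from each term of the expansion. I expect the main difficulty to lie here: one must verify that the per-term gain is a genuine fixed power of $Z^\delta$ — this is exactly what the three quantitative hypotheses $Y\ge Z^{3\delta}$, $V\ge QZ^{\delta/2}Y^{-1/2}$ and $V_1\ge V$ are arranged to provide — and that truncating at $N=3\delta^{-1}A$ terms yields an error that is uniformly $O_{A,\delta}(Z^{-A})$, with no residual dependence on $X,Y,Q,V,V_1$.

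Finally, the derivative bounds (\ref{pcontrol}) follow by differentiating (\ref{pn}) in $t_0$. Since $H,H',H''$ vanish at $t_0$, Fa\`a di Bruno applied to $e^{iH}$ together with the Leibniz rule gives $\partial_t^{2n}G\big|_{t_0}\ll X\big(V^{-2n}+(Y^{1/3}/Q)^{2n}\big)$ — the second term arising by packing in as many derivatives $h^{(k)}(t_0)\ll Y/Q^k$ ($k\ge 3$) as possible — whence, with $|h''(t_0)|^{-n}\ll(Q^2/Y)^n$, the $j=0$ case of (\ref{pcontrol}) follows. Each further $\mathrm{d}/\mathrm{d}t_0$ either lands on the factor $h''(t_0)^{-n}$, costing $h'''(t_0)/h''(t_0)\ll 1/Q$, or on one of the $w^{(i)}(t_0)$ (costing $V^{-1}$) or $h^{(k)}(t_0)$ (costing $1/Q$) inside $\partial_t^{2n}G|_{t_0}$; here one must remember that, because $H$ depends on the parameter $t_0$, this is a total derivative $\mathrm{d}/\mathrm{d}t_0$ and not $\partial_t^{2n+1}G|_{t_0}$ (for instance $\partial_t^2 G|_{t_0}=w''(t_0)$ exactly). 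Hence $j$ such derivatives cost $\ll(V^{-1}+Q^{-1})^j$, which is (\ref{pcontrol}). The only genuinely delicate bookkeeping is again matching the number of terms to the per-term gain in the previous step; everything else is routine combinatorics of Taylor coefficients.
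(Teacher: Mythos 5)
The paper does not prove this lemma at all: it is stated verbatim as a quotation from \cite{BKY} (it is essentially Proposition 8.2 there), and the surrounding text makes clear that the author is simply importing it. So there is no ``paper's proof'' to compare against; I will instead assess your sketch on its own terms.

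Your outline is a faithful reconstruction of the argument in \cite{BKY}: dyadic localization to $|t-t_0|\le L$ with $L\asymp QZ^{\delta/4}Y^{-1/2}$ (which sits between the natural width $Q/Y^{1/2}$ and $V$ precisely because $V\ge QZ^{\delta/2}Y^{-1/2}$), repeated integration by parts off the bump with a per-step gain $|h'|^{-1}\bigl(L^{-1}+|h''|/|h'|\bigr)\ll Z^{-\delta/2}$, extraction of the Gaussian and Fourier inversion against $\widehat G$, and Taylor expansion of $e^{-i\xi^2/(2a)}$ truncated at $N=3\delta^{-1}A$ terms. The main-term bookkeeping is correct (the sign of $e^{i\pi/4}$ matches because the hypothesis $h''(t)\gg Y/Q^2>0$ forces $h''(t_0)>0$), and your analysis of $\partial_{t_0}^j p_n(t_0)$ via Fa\`a di Bruno with $H^{(\nu)}(t_0)=h^{(\nu)}(t_0)$ for $\nu\ge 3$ correctly reproduces \eqref{pcontrol}, including the $Y^{-n/3}$ branch from packing derivatives of weight three into $H$.

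Two small remarks. First, a slip of notation: having split $w=w_0+w_1$, the tail integral is $\int w_1(t)\,e^{ih(t)}\,\mathrm{d}t$, not $\int w_1(t)w(t)\,e^{ih(t)}\,\mathrm{d}t$. Second, and more substantively, you acknowledge but do not carry out the truncation bound for the Gaussian expansion; this is genuinely the only place where the hypotheses $Y\ge Z^{3\delta}$, $V\ge QZ^{\delta/2}Y^{-1/2}$ are spent, and one has to be a little careful that the derivatives picked up from the artificial cutoff $w_0$ (scale $L^{-1}$, slightly larger than $V^{-1}$) do not spoil the per-term gain of a fixed power of $Z^\delta$. Since $w_0\equiv 1$ near $t_0$ the cutoff contributes nothing to $G^{(2n)}(t_0)$, but it does enter the remainder; with $L\asymp QZ^{\delta/4}Y^{-1/2}$ the loss $L^{-1}$ is still absorbed by the gain $a^{-1}L^{-2}\asymp Z^{-\delta/2}$, so the argument closes, but this is exactly the delicate balance you flagged and it should be made explicit in a full write-up.
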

We want to extract the first five terms in the asymptotic expansion, in order to have a small enough error term that will be easy to deal with. We therefore compute
$$p_0(t_0)= \sqrt{2\pi}e(1/8) w(t_0),$$
and 
$$G'(t)= w'(t) e^{iH(t)} + iw(t)H'(t)e^{iH(t)} ,$$
$$G''(t)= e^{iH(t)}(w''(t)+2 iw'(t) H'(t)  +iw(t) H''(t) - w(t)H'(t)^2 ).$$
We now see that $H(t_0)=0,$ while 
$$H'(t)= h'(t) -h''(t_0) (t-t_0),$$
and
$$H''(t)= h''(t) - h''(t_0).$$
Hence, we see that also $H'(t_0), H''(t_0) = 0$.  We therefore have
$$p_1(t_0)= \sqrt{2\pi}e(1/8) \frac{i}{2h''(t_0)} w''(t_0).$$
Noting that only the terms that don't contain $H^{(i)}$ for $i=0,1,2$ survive, and that $H^{(j)}(t)=h^{(j)}(t)$ for $j\geq 3$, we have
\begin{align*}
G^{(4)}(t_0)&= w^{(4)}(t_0) + 4i w'(t_0)h^{(3)}(t_0) +iw(t_0)h^{(4)}(t_0),
\end{align*}
and thus
$$p_2(t_0) = - \frac{\sqrt{2\pi}e\left(\frac{1}{8}\right)}{8 h''(t_0)^2} (w^{(4)}(t_0) + 4i w'(t_0)h^{(3)}(t_0) +iw(t_0)h^{(4)}(t_0)).$$
In general, $G^{(2n)}(t_0)$ is a linear combination of terms of the form 
$$w^{(\nu_0)}(t_0)H^{(\nu_1)}(t_0)\cdots H^{(\nu_l)},$$
where $\nu_0 +\cdots + \nu_l = 2n$.\\
We now wish to use these in the context of the study of $W^\dagger(r,s),$ where we write $s= \sigma + i \beta \in \C$. We may thus use the lemma above with 
$$w(x)= W(x)x^{\sigma-1},$$
and
$$h(x) = -2\pi rx + \beta\log x.$$
Then, 
\begin{equation}\label{sech}
h'(x)= -2\pi r + \frac{\beta}{x}, \, \hbox{ and } h^{(j)}(x) = (-1)^{j-1}(j-1)!\frac{\beta}{x^j},
\end{equation}
for $j\geq 2$. The unique stationary point is given by 
$$x_0 = \frac{\beta}{2\pi r}.$$
We now let
$$\check{W}(x) := x^{1-\sigma} \sum_{n=0}^5 p_n(x),$$
and claim it is non-oscillatory in the following sense.
\begin{claim}
Let $\beta \gg 1$. Then for all $j\geq 0$, and $x\in [a,b],$
$$\check{W}^{j}(x) \ll_{\sigma,j,a,b} 1.$$
\end{claim}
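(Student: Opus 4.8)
The plan is to unwind the definition $\check W(x) = x^{1-\sigma}\sum_{n=0}^5 p_n(x)$ and to control the derivatives of each summand $x^{1-\sigma}p_n(x)$ separately, using the explicit formulas for $p_0,\dots,p_2$ computed above and the general bound \eqref{pcontrol} for the higher ones. The point is that in the present application we apply Lemma \ref{BKY} with parameters chosen so that $x$ ranges over a fixed compact interval $[a,b]$: here $w(x) = W(x)x^{\sigma-1}$ has support of bounded length, so $V_1 \asymp 1$, $X \asymp_{\sigma,a,b} 1$, and $V \asymp 1$; and $h(x) = -2\pi rx + \beta\log x$ has $h^{(j)}(x) = (-1)^{j-1}(j-1)!\,\beta/x^j$ by \eqref{sech}, so that on $[a,b]$ one has $h''(x) \asymp \beta$ and $h^{(j)}(x) \asymp_j \beta$ for all $j\ge 2$; thus in the notation of the lemma we may take $Q \asymp 1$ and $Y \asymp \beta$. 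Since $\beta \gg 1$ by hypothesis, the hypotheses $Y \ge Z^{3\delta}$ and $V_1 \ge V \ge QZ^{\delta/2}/Y^{1/2}$ of Lemma \ref{BKY} are met for $\beta$ large (and for bounded $\beta$ the claim is trivial by direct estimation of the integral, absorbing everything into the implied constant).

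First I would record that, with these parameter choices, the general derivative bound \eqref{pcontrol} specializes to
$$\frac{\mathrm d^j}{\mathrm dx^j} p_n(x) \ll_{j,n,\sigma,a,b} \bigl(\beta^{-n} + \beta^{-n/3}\bigr) \ll_{j,n,\sigma,a,b} 1$$
for $x\in[a,b]$, using $V^2Y/Q^2 \asymp \beta$ and $\beta \gg 1$; in particular each $p_n$ and all its derivatives are $O(1)$ on $[a,b]$. Next, multiplying by the smooth, bounded, non-vanishing-derivative-controlled factor $x^{1-\sigma}$ on the compact interval $[a,b]$ (where $x^{1-\sigma}$ and all its derivatives are $\ll_{\sigma,a,b} 1$ since $x$ is bounded away from $0$), and applying the Leibniz rule, gives
$$\frac{\mathrm d^j}{\mathrm dx^j}\bigl(x^{1-\sigma}p_n(x)\bigr) \ll_{j,n,\sigma,a,b} 1.$$
Summing over $0\le n\le 5$ yields $\check W^{(j)}(x) \ll_{\sigma,j,a,b} 1$, which is the claim. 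For the lowest-order terms one can alternatively just differentiate the explicit expressions for $p_0,p_1,p_2$ written above, noting that $w^{(k)}(x) = (W(x)x^{\sigma-1})^{(k)} \ll_{\sigma,k,a,b} 1$ on $[a,b]$, that $h''(x)\asymp\beta \gg 1$ so $1/h''(x)^m \ll 1$, and that each $h^{(k)}(x)\ll_k \beta$ combines with the powers of $1/h''$ to stay bounded — this makes the structure transparent without invoking \eqref{pcontrol}.

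The only genuine subtlety — and the step I would be most careful about — is checking that the hypotheses of Lemma \ref{BKY} (especially the lower bound $V \ge QZ^{\delta/2}/Y^{1/2}$ and the requirement of a \emph{unique} stationary point in $[a,b]$) are actually in force for the range of $r$ relevant here: one needs $x_0 = \beta/(2\pi r)$ to lie in $[a,b]$, i.e. $r \asymp \beta$, for the asymptotic expansion to be the right description, and otherwise $W^\dagger(r,s)$ is negligible by repeated integration by parts and the claim about $\check W$ is vacuous or trivial. So the proof should open by dispatching the non-stationary regime, then fix $r\asymp\beta$, verify $Q\asymp 1$, $Y\asymp\beta$, $X,V,V_1\asymp 1$, invoke \eqref{pcontrol}, and conclude by Leibniz as above. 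Everything past the parameter bookkeeping is routine.
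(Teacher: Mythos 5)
Your proposal is correct and takes essentially the same route as the paper: reduce by the Leibniz rule to controlling $p_n^{(l)}(x)$ on $[a,b]$, and observe that with $X,V,V_1,Q\asymp 1$ and $Y\asymp\beta$ the derivatives of the $p_n$ are $O(1)$ because $h''\asymp\beta$ dominates all the higher $h^{(j)}\asymp\beta$ in the rational expressions. Your systematic appeal to \eqref{pcontrol} is a slightly cleaner formalization of the paper's informal ``the higher derivatives of $h$ don't grow compared to the powers of $h''$ in the denominator,'' but it is the same argument. One small remark: the ``genuine subtlety'' you flag at the end (checking the BKY hypotheses and the unique stationary point) is not actually needed for \emph{this} claim --- $\check{W}$ is defined directly as $x^{1-\sigma}\sum_{n\le 5}p_n(x)$, with $p_n$ an explicit rational function of $h^{(j)}(x)$ and $w^{(j)}(x)$, so the bound is a direct estimate on that explicit expression; those hypotheses only enter when Lemma \ref{newstat} asserts that $W^\dagger$ is asymptotic to an expression involving $\check{W}$.
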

\begin{proof}
We compute
$$\check{W}^{(j)}(x) = \sum_{l=0}^j \left(\begin{array}{l} j\\l\end{array}\right) (x^{1-\sigma})^{(j-l)} \sum_{n=0}^5p_n^{(l)}(x).$$
Now, it is clear that $(x^{1-\sigma})^{(j-l)} \ll_{j, \sigma,a,b}1,$ and so we just need to control the derivatives of each $p_n$. Since $w$ is a product of a power of $x$ with $W$ and $W^{(j)}(x)\ll_j 1$, we can easily see that $p_0(x) \ll_{j,\sigma,a,b} 1$. Now 
$$h''(x_0)= -\frac{\beta}{x^j},$$
and since $\beta \gg 1,$ by the same argument as for $p_0$, it is clear that $p_1(x) \ll 1$. We may apply the same reasoning for $p_2$, and more generally for any $p_n$, since (\ref{sech}) implies the higher derivatives of $h$ don't grow compared to the powers of $h''$ in the denominator.
\end{proof}

We may now give the following result for $W^\dagger(r,s)$.
\begin{lemma}\label{newstat}
Let $r\in \R$ and $s=\sigma + i\beta \in \C$, such that $x_0= \frac{\beta}{2\pi r} \in[a/2, 2b]$. Then,
\begin{align*}
W^\dagger(r,s) =& \frac{\sqrt{2\pi} e(1/8)}{\sqrt{-\beta}} \left(\frac{\beta}{2\pi r}\right)^\sigma \left(\frac{\beta}{2\pi er}\right)^{i\beta} \check{W}\left(\frac{\beta}{2\pi r}\right)+O(\min\{|\beta|^{-5/2},|r|^{-5/2}\}). 
\end{align*}
\end{lemma}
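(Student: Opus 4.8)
The plan is to apply Lemma \ref{BKY} directly to the integral defining $W^\dagger(r,s)$, with the choices $w(x) = W(x)x^{\sigma-1}$ and $h(x) = -2\pi r x + \beta\log x$ already laid out above, and then simply collect the first six terms $p_0,\dots,p_5$ of the resulting asymptotic expansion, packaging them into $\check W$. The point of carrying five terms rather than, say, one, is precisely to make the tail of the expansion negligible: Lemma \ref{BKY} gives an error of the form $Z^{-A}$ for arbitrary $A$ once we take enough terms, but here we only want to go far enough that the \emph{truncation} error from the terms $p_n$ with $n\geq 6$ — controlled by \eqref{pcontrol} — together with the $O(Z^{-A})$ tail, is $O(\min\{|\beta|^{-5/2},|r|^{-5/2}\})$. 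Since each successive $p_n$ gains a factor roughly $h''(x_0)^{-1}\asymp \beta^{-1}$ (using \eqref{sech}, $h''(x_0) = -\beta/x_0^2$ and $x_0\asymp 1$ on the support), $p_n$ has size $\asymp \beta^{-n}$ up to the overall $\beta^{-1/2}$; so $p_6$ is already $\ll \beta^{-13/2}$, comfortably inside the claimed error once we also note $|\beta|\asymp|r|$ in the relevant range (which follows from $x_0 = \beta/(2\pi r) \in [a/2,2b]$).

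First I would verify that the hypotheses of Lemma \ref{BKY} are met with an appropriate dictionary of parameters. The function $w(x) = W(x)x^{\sigma-1}$ is supported on the fixed interval $[a,b]$, so $V_1 \asymp 1$, and $w^{(j)}(x) \ll_{j,\sigma,a,b} 1$, giving $X\asymp 1$ and $V\asymp 1$. For $h$, formula \eqref{sech} shows $h^{(j)}(x) \ll_j |\beta|$ for $j\geq 2$ and $h''(x) \gg |\beta|$ on $[a,b]$ (once $x_0$ lies in $[a/2,2b]$, which by assumption it does, so that $r\asymp\beta$ and $h''$ does not vanish or degenerate), so we may take $Y\asymp |\beta|$ and $Q\asymp 1$; then $Z\asymp |\beta|+1$, and the conditions $Y\geq Z^{3\delta}$, $V_1\geq V\geq QZ^{\delta/2}Y^{-1/2}$ hold for $|\beta|$ large, i.e. under the standing assumption $\beta\gg 1$. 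There is a unique stationary point $x_0 = \beta/(2\pi r)$ in the interval, as required. One slightly delicate point: Lemma \ref{BKY} requires $x_0$ to lie in the support interval $[a,b]$ of $w$, whereas the statement I am proving only assumes $x_0\in[a/2,2b]$; I would handle the boundary cases $x_0\in[a/2,a]\cup[b,2b]$ separately by a trivial integration-by-parts bound on $W^\dagger(r,s)$ (no stationary point in the support then, since $h'(x) = -2\pi r + \beta/x$ is monotone and bounded away from zero there), which gives a saving of $\min\{|\beta|,|r|\}^{-N}$ for any $N$, absorbed into the error term; in that regime $\check W(\beta/(2\pi r))$ should be interpreted via the support of $W$ so that the main term is likewise negligible.

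With the hypotheses checked, the main term from Lemma \ref{BKY} is $\frac{e^{ih(x_0)}}{\sqrt{h''(x_0)}}\sum_{n\leq 3\delta^{-1}A} p_n(x_0) + O_{A,\delta}(Z^{-A})$. Here $h(x_0) = -2\pi r x_0 + \beta\log x_0 = -\beta + \beta\log(\beta/(2\pi r))$, so $e^{ih(x_0)}$, after converting $e^{i\theta}$ to $e(\theta/2\pi)$, becomes $(\beta/(2\pi e r))^{i\beta}$; and $\sqrt{h''(x_0)} = \sqrt{-\beta/x_0^2} = \sqrt{-\beta}\,(2\pi r/\beta)$, giving the prefactor $\frac{1}{\sqrt{-\beta}}(\beta/(2\pi r))$. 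I then truncate the sum at $n=5$, writing $\sum_{n=0}^5 p_n(x_0) = x_0^{\sigma-1}\,x_0^{1-\sigma}\sum_{n=0}^5 p_n(x_0) = x_0^{\sigma-1}\check W(x_0)$ by the definition of $\check W$ preceding the Claim — but I must be careful about the extra factor: matching against the stated form, the main term is $\frac{\sqrt{2\pi}e(1/8)}{\sqrt{-\beta}}(\beta/(2\pi r))^\sigma (\beta/(2\pi e r))^{i\beta}\check W(\beta/(2\pi r))$, and one checks the bookkeeping of the $x_0^{\sigma}$ versus $x_0^{\sigma-1}$ powers and the explicit $\sqrt{2\pi}e(1/8) = \sqrt{2\pi}e^{\pi i/4}$ coming from $p_0 = \sqrt{2\pi}e^{\pi i/4}w(t_0)$ absorbs correctly — this is the routine part. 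The truncation error is $\frac{1}{\sqrt{|\beta|}}\sum_{n\geq 6}|p_n(x_0)| \ll \frac{1}{\sqrt{|\beta|}}\cdot |\beta|^{-6} \ll |\beta|^{-5/2}$ by \eqref{pcontrol} (with $X,V,Q\asymp 1$, $Y\asymp|\beta|$, the bound reads $p_n \ll |\beta|^{-n}$), and since $r\asymp\beta$ in this range this is also $\ll |r|^{-5/2}$, completing the estimate. The main obstacle is bookkeeping rather than conceptual: correctly tracking the powers of $x_0$, the phase $h(x_0)$, and the constant $\sqrt{2\pi}e^{\pi i/4}$ through the substitution, and cleanly disposing of the near-endpoint cases where $x_0$ is only known to lie in the slightly enlarged interval $[a/2,2b]$.
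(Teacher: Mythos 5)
Your argument is essentially the paper's: a direct application of Lemma~\ref{BKY} with $X=V=Q\asymp 1$, $Y\asymp|\beta|$, $V_1\asymp 1$, the phase $h(x)=-2\pi rx+\beta\log x$, and amplitude $w(x)=W(x)x^{\sigma-1}$, then truncating the expansion at $n=5$ and reading off $h(x_0)$, $h''(x_0)$. The bookkeeping of the stationary values, the observation $|r|\asymp|\beta|$, and the endpoint discussion are all fine (and the latter is handled more carefully than in the paper, which leaves it implicit).

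One piece of your reasoning, however, misreads \eqref{pcontrol} in a way that matters conceptually even though the final bound survives. You assert that $p_n(x_0)\ll|\beta|^{-n}$, ``each successive $p_n$ gaining a factor $h''(x_0)^{-1}$.'' But with $X,V,Q\asymp 1$ and $Y\asymp|\beta|$, the bound \eqref{pcontrol} reads $p_n\ll (V^2Y/Q^2)^{-n}+Y^{-n/3}\asymp|\beta|^{-n}+|\beta|^{-n/3}$, and it is the \emph{second} term that dominates: $p_n\ll|\beta|^{-n/3}$. Thus $p_6\ll|\beta|^{-2}$, not $|\beta|^{-6}$, and the truncation error is
$$\frac{1}{\sqrt{|\beta|}}\sum_{n\ge 6}|p_n(x_0)|\ll\frac{1}{\sqrt{|\beta|}}\,|\beta|^{-2}=|\beta|^{-5/2},$$
which is exactly the claimed error and is sharp for this truncation. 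This is precisely why the paper declares ``we want to extract the first five terms'': under your heuristic, already $p_2\ll|\beta|^{-2}$ and two terms would suffice, but the correct scaling $Y^{-n/3}$ forces the cutoff at $n=5$. The final estimate you state is right, but the mechanism you offer for it is not.

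Two smaller remarks. First, as you note, the stated main term carries an explicit $\sqrt{2\pi}\,e(1/8)$ while $p_0$ already contains this constant inside $\check W$; this apparent double-counting is a bookkeeping artifact of the paper's formulation of $\check W$, not something you introduced, and it has no bearing on the order of magnitude. Second, in your endpoint argument (say $x_0\in[a/2,a)$), the phase derivative $h'(x)=2\pi r(x_0-x)/x$ is \emph{not} uniformly bounded away from zero on $[a,b]$ when $x_0$ is close to $a$; what saves the repeated integration by parts is that $W$ and all its derivatives vanish to infinite order at $x=a$, so each boundary contribution and each new factor of $w^{(j)}/h'$ stays controlled. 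Your write-up glosses over this, but the claim is still correct.
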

\begin{proof}
This is a direct application of Lemma \ref{BKY} with $X=V=Q=1, Y=\max\{|\beta|, |r|\}, V_1=b-a,$ using the above computations as well as (\ref{pcontrol}).  
\end{proof}
We also quote from \cite{MR3369905} the following lemma.
\begin{lemma}\label{Munshilemma}
$$W^\dagger(r,s) = O_{a,b,\sigma,j} \left(\min\left\{\left(\frac{1+|\beta|}{|r|}\right)^j, \left(\frac{1+|r|}{|b|}\right)^j\right\}\right).$$
\end{lemma}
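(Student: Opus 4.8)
The final statement to prove is Lemma \ref{Munshilemma}, which asserts the bound
$$W^\dagger(r,s) = O_{a,b,\sigma,j}\left(\min\left\{\left(\frac{1+|\beta|}{|r|}\right)^j,\left(\frac{1+|r|}{|b|}\right)^j\right\}\right),$$
where $W^\dagger(r,s)=\int_0^\infty W(x)e(-rx)x^{s-1}\,\mathrm{d}x$ with $W$ smooth, supported in $[a,b]\subset(0,\infty)$, and $W^{(j)}(x)\ll_j 1$, and $s=\sigma+i\beta$.

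\textbf{Plan of proof.} The idea is the classical one: the integrand has an oscillatory factor $e(-rx)x^{i\beta} = e\bigl(-rx+\frac{\beta}{2\pi}\log x\bigr)$, and whenever the phase has no stationary point in the support we can integrate by parts repeatedly to extract arbitrary savings. First I would rewrite
$$W^\dagger(r,s)=\int_a^b W(x)x^{\sigma-1}\,e\!\left(-rx+\tfrac{\beta}{2\pi}\log x\right)\mathrm{d}x =: \int_a^b w(x)\,e(\varphi(x))\,\mathrm{d}x,$$
with $w(x)=W(x)x^{\sigma-1}$ (so $w^{(k)}(x)\ll_{k,\sigma,a,b}1$) and $\varphi(x)=-rx+\frac{\beta}{2\pi}\log x$. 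Then $\varphi'(x)=-r+\frac{\beta}{2\pi x}$ and $\varphi^{(k)}(x)=(-1)^{k-1}(k-1)!\frac{\beta}{2\pi x^k}$ for $k\geq 2$. The standard non-stationary-phase integration-by-parts lemma gives, for any $j\geq 0$,
$$\int_a^b w(x)e(\varphi(x))\,\mathrm{d}x \ll_j \int_a^b \left|\left(\frac{1}{2\pi i}\frac{1}{\varphi'(x)}\frac{\mathrm{d}}{\mathrm{d}x}\right)^{\!j} w(x)\right|\,\mathrm{d}x,$$
provided $\varphi'$ does not vanish on $[a,b]$; one differentiates the operator out and bounds every resulting term by powers of $\left(\inf_{[a,b]}|\varphi'|\right)^{-1}$ times quantities that are $\ll 1$ (using that $\varphi^{(k)}/\varphi'$ is controlled since on $[a,b]$ the higher derivatives $\varphi^{(k)}$ are $\ll |\beta|$ and bounded below in a way compatible with $|\varphi'|$). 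So the whole matter reduces to getting a good lower bound for $|\varphi'(x)|=|{-r}+\frac{\beta}{2\pi x}|$ on $[a,b]$, in two complementary ranges.

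\textbf{The two bounds.} For the first bound, $\bigl(\tfrac{1+|\beta|}{|r|}\bigr)^j$: if $|r|$ is large compared with $|\beta|$, then on $[a,b]$ we have $\left|\frac{\beta}{2\pi x}\right|\leq \frac{|\beta|}{2\pi a}$, so if, say, $|r|\geq \frac{|\beta|}{\pi a}$ then $|\varphi'(x)|\geq |r|-\frac{|\beta|}{2\pi a}\gg |r|$, and also $|\varphi^{(k)}(x)|\ll_k |\beta|\ll |r|$ trivially, so each application of the operator $\frac{1}{\varphi'}\frac{\mathrm{d}}{\mathrm{d}x}$ costs at most $\ll 1/|r|\ll (1+|\beta|)/|r|$ (after absorbing the $1+|\beta|$, which only helps when $|\beta|\geq 1$; for $|\beta|\leq 1$ the factor $1$ suffices and the argument is the same with $|\beta|$ replaced by $O(1)$). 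When $|r|$ is not large compared with $|\beta|$, the claimed bound $\bigl(\tfrac{1+|\beta|}{|r|}\bigr)^j \gg 1$ holds for $|r|\ll 1+|\beta|$ up to constants only if $|r|\gg$ something, so one must be slightly careful: the bound is only nontrivial when $|r|> 1+|\beta|$, and in that regime the integration by parts as above applies. For the second bound, $\bigl(\tfrac{1+|r|}{|b|}\bigr)^j$ — I read $|b|$ here as $|\beta|$, matching the phase's $\log$-derivative — one argues symmetrically: when $|\beta|$ is large compared with $|r|b$, we have $\left|\frac{\beta}{2\pi x}\right|\geq \frac{|\beta|}{2\pi b}\geq 2|r|$ say, hence $|\varphi'(x)|\geq \frac{|\beta|}{2\pi x}-|r|\gg \frac{|\beta|}{b}\gg \frac{1+|r|}{b}$ (again handling $|r|\leq 1$ separately), and $|\varphi^{(k)}(x)|\ll_k |\beta|/x^k\ll (|\beta|/b)\cdot b^{-(k-1)}$ so each application of the operator costs $\ll b/|\beta|\ll b/(1+|r|)$. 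In both cases the trivial bound $W^\dagger\ll_{a,b}1$ (valid always since $w$ is bounded and the interval has finite length) takes care of the complementary ranges where the claimed min-bound is itself $\gg 1$, so the min is genuine.

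\textbf{Main obstacle.} The real content is purely bookkeeping: making the integration-by-parts lemma quantitative with the stated exponents requires checking that, in the relevant ranges, the ratios $\varphi^{(k)}(x)/\varphi'(x)$ (and products thereof that arise when the operator $\frac{1}{\varphi'}\frac{d}{dx}$ is iterated) are bounded by the appropriate power of the small quantity, uniformly on $[a,b]$. Since $\varphi^{(k)}(x)\ll_k |\beta| x^{-k}$ and in each regime $|\varphi'(x)|$ is comparable to the dominant of $|r|$ and $|\beta|/b$, this is straightforward but needs the case split to be organized cleanly. I would therefore state and prove the general non-stationary-phase bound once (or cite it, e.g. from \cite{BKY} or a standard reference), and then simply verify the two chains of inequalities above. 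There is no genuine difficulty here because, unlike Lemma \ref{newstat}, we are explicitly away from the stationary point $x_0=\beta/(2\pi r)$ — indeed the hypothesis "$x_0\in[a/2,2b]$" of Lemma \ref{newstat} and the non-triviality ranges of the present min-bound are essentially complementary — so no stationary-phase analysis is needed, only repeated integration by parts.
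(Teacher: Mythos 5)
Your proposal is correct, and it reconstructs the argument behind this lemma faithfully: the paper itself offers no proof, quoting the bound directly from Munshi \cite{MR3369905} (his Lemma~4), where the justification is precisely this non-stationary-phase integration by parts. Your reading of $|b|$ as a typo for $|\beta|$ is right — with the literal interval endpoint $b$ the second bound would be useless — and your observation that the nontrivial regimes of the two bounds force $x_0=\beta/(2\pi r)$ outside $[a,b]$, so no stationary-phase analysis is needed, is exactly the point that makes the lemma elementary and complementary to Lemma~\ref{newstat}.
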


\section{Analysis of $K_t$}\label{anak} 
In this section, we analyse further the integral representation of $K_t$. We make a partition of unity in the integral: let $\mathcal{I}= \{0\} \cup_{j\geq 0} \{\pm\left(\frac{4}{3}\right)^j\}$, such that  for each $l\in \mathcal{I}$, we take a smooth function $W_l(x)$ supported in $[\frac{3l}{4},\frac{4l}{3}]$ for $l\not= 0$ and such that 
$$x^kW_l^{(k)}(x)\ll_k 1,$$
for all $k\geq 0$. for $l=0$, take $W_0(x)$ supported in $[-2,2]$ with $W_0^{(k)}(x)\ll_l 1$. and such that $1= \sum_{l\in \mathcal{I}} W_l(x).$ We then let for any $i\in \mathcal{I}$, 
$$I_{l,t}(x) :=\frac{1}{2\pi} \int_\R g_t(\sigma + i\nu) e(f_t(\sigma + i\nu)) x^{-\sigma -i\nu} W_l(\nu) \mathrm{d}\nu.$$ 
We prove the following result.
\begin{lemma}\label{locK}
Let $K_t$ be an analytic trace function. We have, for $x\in [t,2t] $, and any $\epsilon >0$,
$$K_t(x) = \sum_{\hbox{Supp}(W_l)\subset [\pm t^{1-\epsilon},\pm t^{1+\epsilon}] \cup [-t^\epsilon,t^\epsilon]} I_{l,t}(x) + O(t^{-1000}).$$
Moreover, we also have
$$\max_{x \in [t,2t]} |K_t(x)| \ll 1.$$
\end{lemma}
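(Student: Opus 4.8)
The plan is to split the defining Mellin integral for $K_t(x)$ using the partition of unity $1 = \sum_{l\in\mathcal{I}} W_l(\nu)$ along the vertical line $\Re(s) = \sigma$, so that $K_t(x) = \sum_{l\in\mathcal{I}} I_{l,t}(x)$, and then to show that for $x\in[t,2t]$ the only pieces $I_{l,t}(x)$ that are not negligible are those with support either very close to the "diagonal" scale $|\nu|\asymp t$ (more precisely $|\nu|\in[t^{1-\epsilon},t^{1+\epsilon}]$) or very close to $\nu = 0$ (i.e. $|\nu|\le t^\epsilon$). The mechanism for this is stationary phase in the $\nu$-variable: the phase of the integrand is $2\pi f_t(\sigma+i\nu) - \frac{\nu}{2\pi}\log x$ up to normalisation, whose $\nu$-derivative is (a constant multiple of) $f_t'(\sigma+i\nu) - \frac{1}{2\pi}\log x$. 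By hypothesis \eqref{condfone}, whenever $|\nu|\le c_1 t$ or $|\nu|\ge c_2 t$ this derivative is $\gg 1$, so there is no stationary point; combined with the derivative bounds \eqref{condg} on $g_t$ and \eqref{condf} on $f_t$, repeated integration by parts on each dyadic piece $I_{l,t}(x)$ with $\mathrm{Supp}(W_l)$ in that range gives a saving of an arbitrary power, yielding $I_{l,t}(x) \ll t^{-1000}$ (and summing over the $O(\log t)$ relevant $l$ keeps this negligible after adjusting the exponent). This disposes of all $l$ except those in the window $c_1 t \le |\nu| \le c_2 t$, which is contained in $[t^{1-\epsilon},t^{1+\epsilon}]$ for $t$ large, plus the single bounded piece near $0$; that is exactly the asserted sum.

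For the bound $\max_{x\in[t,2t]}|K_t(x)|\ll 1$, I would first observe that by the preceding reduction it suffices to bound each of the $O(\log t)$ surviving pieces $I_{l,t}(x)$ by $\ll 1$ (or $\ll t^\epsilon$, then remove the $\epsilon$ by a cleaner argument — see below). For the piece near $\nu=0$, $W_0$ has bounded support and $g_t(\sigma+i\nu)\ll \langle\nu\rangle^{\sigma-1/2}\ll 1$ there, so $I_{0,t}(x)\ll 1$ trivially. For a dyadic piece at scale $|\nu|\asymp N$ with $c_1 t\le N\le c_2 t$, we have $\int |g_t(\sigma+i\nu)W_l(\nu)|\,\mathrm{d}\nu \ll N\cdot N^{\sigma-1/2} = N^{\sigma+1/2}$, which is much larger than $1$; the required cancellation must come from the oscillation of $e(f_t(\sigma+i\nu))$. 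Here one invokes stationary phase properly: by \eqref{condftwo} we have $f_t''(\sigma+i\nu)\gg \nu^{-1}\asymp t^{-1}$ on the part of the range where \eqref{condfone} fails, so Lemma \ref{BKY} (applied with parameters $X \asymp N^{\sigma-1/2}$, $Q^2 \asymp N/ (\text{second-derivative size})\asymp N^2$ appropriately, $Y\asymp N$) produces a main term of size $\frac{X}{\sqrt{f_t''}}\asymp N^{\sigma-1/2}\cdot N^{1/2} = N^{\sigma}$ — wait, this needs care — and after incorporating the $x^{-\sigma}$ factor with $x\asymp t\asymp N$ and summing we land at $\ll 1$; on the complementary part where \eqref{condfone} holds, nonstationary-phase integration by parts again gives a negligible contribution. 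I would carry this out dyadically and sum the $O(\log t)$ contributions.

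The main obstacle is the bookkeeping in the stationary-phase step for the $\infty$-norm bound: one must check that the hypotheses of Lemma \ref{BKY} are genuinely met on each dyadic block (in particular that there is a \emph{unique} stationary point, which may require further subdividing the window using the fact that on one sub-range \eqref{condfone} holds and on the complement \eqref{condftwo}–\eqref{condf} hold, and that the region where neither is clean has measure controlled), and that the resulting main term, once multiplied by $x^{-\sigma-i\nu}$ and integrated, has modulus $\ll 1$ uniformly in $x\in[t,2t]$ — the size conditions \eqref{condg} are calibrated precisely so that $g_t$ times the square-root-of-second-derivative factor $\asymp \nu^{1/2}$ produces $\nu^\sigma$, which is then cancelled by $x^{-\sigma}\asymp \nu^{-\sigma}$. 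A secondary subtlety is removing the $t^\epsilon$ losses coming from summing over dyadic scales; since the surviving scales all satisfy $|\nu|\asymp t$ there are in fact only $O(1)$ of them once $c_1,c_2$ are fixed, so no genuine loss occurs and the clean bound $\ll 1$ follows. I would also remark that this is the only place in the paper where conditions \eqref{condftwo} and \eqref{condf}, beyond \eqref{condfone}, are used to control $K_t$ itself rather than the arithmetic sum.
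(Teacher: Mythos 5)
Your proof is essentially the same as the paper's for the first claim (decomposition into the near-zero and near-$t$ pieces plus a negligible error): you localise via the partition of unity and kill the middle scales $t^\epsilon \ll |\nu| \le c_1 t$ and $|\nu| \ge c_2 t$ by non-stationary phase using \eqref{condfone} together with the derivative controls on $g_t$ and $f_t$. That part is correct.

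For the sup-norm bound on the $|\nu|\asymp t$ pieces, the paper does something simpler than what you propose. Rather than invoking Lemma \ref{BKY} (which, as you correctly flag, requires a \emph{unique} stationary point in the support of each $W_l$, and hence further case-splitting into "\eqref{condfone} holds" versus "stationary point present" subregions), the paper applies the plain second-derivative (van der Corput) bound for oscillatory integrals: since \eqref{condftwo} gives $|f''|\gg |\nu|^{-1}\asymp t^{-1}$, one gets
$$I_{l,t}(x) \ll x^{-\sigma}\bigl(\sup|g_tW_l|+\mathrm{Var}(g_tW_l)\bigr)\cdot t^{1/2} \ll t^{-\sigma}\cdot t^{\sigma-1/2}\cdot t^{1/2}\ll 1,$$
with no need to locate or count stationary points or to verify the hypotheses of the asymptotic-expansion lemma. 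This is exactly the size calibration you identified ($g_t\cdot \nu^{1/2}\asymp \nu^\sigma$ cancelling against $x^{-\sigma}$), but obtained through a weaker and more robust estimate. So your route is viable but more laborious; the paper's choice of tool sidesteps the "main obstacle" you describe, and is the one that really matches the way conditions \eqref{condg} and \eqref{condftwo} were calibrated. Your remark that only $O(1)$ dyadic scales lie in $[c_1t,c_2t]$, and that the near-zero pieces are polynomially small (so the $O(\log t)$ count of those doesn't hurt), is the correct way to close the sup-norm bound, and matches the paper.
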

\begin{proof}
Condition (\ref{mellintrace}) implies that we may write
\begin{equation}\label{spectralexp}
K_t(x) = \frac{1}{2\pi i } \int_{(\sigma)} M_t(s) x^{-s} \mathrm{d}s= \frac{1}{2\pi} \int_\R g_t(\sigma + i\nu)e(f_t(\sigma+i\nu)) x^{-\sigma-i\nu} \mathrm{d}\nu,
\end{equation}
for any $\sigma \in [a,b]$. We now wish to run a stationary phase argument to localise the integral around the points without too much oscillation. 
If $l\ll t^\epsilon$ for some small $0< \epsilon < \sigma/(1/2+\sigma)$, then 
$$I_{i,t} (x) \ll t^{\epsilon +\epsilon (\sigma-1/2)-\sigma} = o(1),$$
as long as we take $\sigma >0$. We now fix such an $\epsilon$ and look at $l$ such that Supp$(W_l)\subset [\pm t^\epsilon, \pm \infty)$, and look at 
$$x^\sigma I_{l,t}(x) = \int_\R g_t(\sigma +i \nu) W_l(\nu) e\left(f_t(\sigma + i\nu) - \frac{\nu}{2\pi}\log(x)\right) \mathrm{d}\nu,$$
for $x\in [t,2t]$. We now compute a few derivatives, in order to apply stationary phase arguments. We have by (\ref{condg})
$$(g_t(\sigma+i\nu)W_l(\nu))^{(j)}(\nu) \ll_j i^{\sigma-1/2-j}, \hspace{1 cm} \forall j\geq 0,$$
while by (\ref{condfone})
$$f_t'(\sigma+i\nu) - \frac{\log(x)}{2\pi} \gg 1,$$
if $\nu \not\asymp t$ and by (\ref{condf}) 
$$f_t^{(j)}(\sigma +i \nu) \ll l^{1+\epsilon/2 -j}.$$ 
Therefore, in the case that $\nu \not\asymp t$, we may use Lemma \ref{BKY} (with $X= l^{\sigma -1/2}, U=l, \beta-\alpha= 3l/2, R=1, Y=l^{1+\epsilon/2}$ and $Q= l$), to deduce that 
$$I_{l,t}(x) \ll_A l^{-A},$$
for any $A>0$.\\

In the case that $\nu \asymp t$, we use the second derivative bound for oscillatory integrals along with (\ref{condftwo}) to deduce that 
$$I_{l,t}(x) \ll 1.$$
\end{proof}
To conclude this section we note that the case where Supp$(W_l) \subset [-t^\epsilon,t^\epsilon]$ can be handled as follows. Since $V$ is a smooth compactly supported function, it admits a Mellin transform,
$$\tilde{V}(s) = \int_0^\infty V(x) x^{s-1} \mathrm{d}x,$$
that decays very rapidly in vertical strips. One can thus write for any $\alpha \in \R,$
$$V(x) = \int_{(\alpha)} \tilde{V}(s) x^{-s} \mathrm{d}s.$$
Using this, we write for any $\sigma \geq 0$, 
\begin{align*}
\sum_{n=1}^\infty \r_f(n) I_{l,t}(n) V\left(\frac{n}{t}\right) &= \int_\R M_t(\sigma + i\nu)  W_l(\nu) \sum_{n=1}^\infty \r_f(n) n^{-\sigma-i\nu} V\left(\frac{n}{t}\right) \mathrm{d}\nu \\
 &= \int_\R \int_{(\alpha)} M_t(\sigma+i\nu) W_l(\nu) \tilde{V}(s) t^s L(f, \sigma+i\nu +s) \mathrm{d}s\mathrm{d}\nu\\
&\ll t^{1/2+\epsilon},
\end{align*}
by the rapid decay of $\tilde{V}$.\par
 We will therefore only focus on the cases where the support of $W_l$ is close to $t$. This may be interpreted as the fact that the spectral decomposition of any analytic trace function, $K_t$, concentrates around multiplicative characters of conductor $t$.

\section{Proof of Theorem 1}
Following Munshi \cite{MR3369905} we adapt Kloosterman's version of the circle method along with a conductor dropping mechanism. We quote here the following proposition in \cite{IwaniecKowalski}.

\begin{prop}
Let 
$$\delta(n) = \left\{\begin{array}{ll} 1 & \hbox{ if } n=0;\\
0 & \hbox{ otherwise.}\end{array}\right.$$
Then, for any real number $Q\geq 1$, we have
$$\delta(n) = 2\Re \int_0^1 \sideset{}{^*}\sum_{1\leq q\leq Q<a\leq q+Q} \frac{1}{aq} e\left(\frac{n\bar{a}}{q} - \frac{nx}{aq}\right) \mathrm{d}x.$$
\end{prop}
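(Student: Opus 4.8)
The plan is to recognise the right-hand side as the Farey dissection of the circle $\R/\Z$, starting from the tautology $\delta(n)=\int_{\R/\Z}e(n\alpha)\,\mathrm{d}\alpha$, valid for $n\in\Z$. Since neither side changes on replacing $Q$ by $\lfloor Q\rfloor$, we may assume $Q\in\Z$. Recall the Farey fractions of order $Q$, i.e.\ the points $b/q$ with $1\le q\le Q$, $b\bmod q$, $(b,q)=1$, arranged in cyclic order on $\R/\Z$: two of them $b/q<b'/q'$ are consecutive exactly when $b'q-bq'=1$ and $q+q'>Q$, in which case their mediant is $\frac bq+\frac1{q(q+q')}$. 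Cutting the circle at all the mediants partitions it into arcs $\mathfrak{M}(b/q)$, one around each Farey point; I further split each $\mathfrak{M}(b/q)$ at $b/q$ into a right half $\mathfrak{M}^{+}(b/q)$ and a left half $\mathfrak{M}^{-}(b/q)$.

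First I would evaluate the right halves. If $b'/q'$ is the right Farey neighbour of $b/q$, put $a:=q+q'$; then $Q-q<q'\le Q$ forces $Q<a\le Q+q$ and $(a,q)=(q',q)=1$, while $\mathfrak{M}^{+}(b/q)=[\,b/q,\ b/q+\tfrac1{qa}\,]$. Substituting $\alpha=\frac bq+\frac x{qa}$ and using $b\equiv-\overline{q'}\equiv-\bar a\pmod q$ — which is just $b'q-bq'=1$ read modulo $q$ — gives
$$\int_{\mathfrak{M}^{+}(b/q)}e(n\alpha)\,\mathrm{d}\alpha=\frac1{qa}\int_0^1 e\!\left(\frac{-n\bar a}{q}+\frac{nx}{qa}\right)\mathrm{d}x.$$
The crucial point is combinatorial: for fixed $q\le Q$, the map $b\mapsto a=q+q'(b,q)$ is a bijection from $(\Z/q)^{\times}$ onto $\{a:Q<a\le Q+q,\ (a,q)=1\}$. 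Indeed, given such an $a$, the integer $q'=a-q$ lies in $(Q-q,Q]$ and is coprime to $q$, and by the Farey adjacency criterion there is a unique residue $b\bmod q$ (namely $b\equiv-\overline{q'}$) whose right neighbour has denominator $q'$; both sets have $\varphi(q)$ elements. Summing over all Farey points therefore converts $\sum_{b/q}\int_{\mathfrak{M}^{+}(b/q)}e(n\alpha)\,\mathrm{d}\alpha$ into
$$S:=\sum_{q\le Q}\ \sum_{\substack{Q<a\le Q+q\\(a,q)=1}}\frac1{qa}\int_0^1 e\!\left(\frac{-n\bar a}{q}+\frac{nx}{qa}\right)\mathrm{d}x.$$

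For the left halves I would use symmetry. The reflection $\alpha\mapsto-\alpha$ preserves the Farey set of order $Q$ and its dissection, sends $\mathfrak{M}(b/q)$ to $\mathfrak{M}(-b/q)$ while interchanging its two halves, and turns $e(n\alpha)$ into $\overline{e(n\alpha)}$ because $n\in\Z$; hence $\int_{\mathfrak{M}^{-}(b/q)}e(n\alpha)\,\mathrm{d}\alpha=\overline{\int_{\mathfrak{M}^{+}(-b/q)}e(n\alpha)\,\mathrm{d}\alpha}$, and after relabelling $b\mapsto-b$ the sum over all left halves is $\overline S$. Since the halves $\mathfrak{M}^{\pm}(b/q)$ tile $\R/\Z$, adding the two contributions gives $\delta(n)=S+\overline S=2\Re S$. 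Pulling the finite sum inside the integral and replacing the bracket by its conjugate (legitimate under $\Re$) flips the sign of the exponent and produces exactly the claimed identity; as a consistency check, the case $n=0$ records merely that the right halves have total length $\tfrac12$.

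The main obstacle is the Farey bookkeeping in the middle paragraph: proving the neighbour-denominator bijection cleanly, and pinning down the arc endpoints — the point shared by two adjacent arcs has measure zero, so it is harmless that it be assigned arbitrarily, but this should be noted. It is essential to use the precise adjacency criterion ``$b'q-bq'=1$ and $q+q'>Q$'' rather than a qualitative approximation, and to keep careful track of which endpoint of each arc is the mediant with the right neighbour versus the left one. Everything else — the linear change of variable on each arc, the congruence $b\equiv-\bar a\pmod q$, and the reflection symmetry yielding the factor $2\Re$ — is then routine.
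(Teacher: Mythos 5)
The paper states this proposition without proof, quoting it directly from Iwaniec--Kowalski. Your Farey-dissection argument is correct and is essentially the classical proof of Kloosterman's refinement given there: the neighbour-denominator bijection $b\leftrightarrow a=q+q'$, the congruence $b\equiv -\bar a \mods q$ extracted from $b'q-bq'=1$, and the reflection $\alpha\mapsto-\alpha$ producing the factor $2\Re$ are all carried out accurately.
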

In particular, we will use this proposition with $Q:=(t/K)^{1/2},$ where $t^{\epsilon'}<K<t^{1-\epsilon'}$ (for some $\epsilon'>0$) is a parameter to be chosen optimally later. We let
$$S_l(t) : = \sum_{n=1}^\infty \r_f(n) I_{l,t}(n) V\left(\frac{n}{t}\right),$$
and note that in order to bound non-trivially $S(t)$, it is sufficient to do so for $S_{l}(t)$, for $i$ such that Supp$W_l \subset [\pm t^{1-\epsilon}, \pm t^{1+\epsilon}]$, as follows from the previous section.
 We may thus write
\begin{align*}
S_l(t) & = \sum_{n=1}^\infty \r_f(n) I_{l,t}(n) V\left(\frac{n}{t}\right)\\
&= \frac{1}{K} \int_\R V\left(\frac{v}{K}\right) \sum_{\substack{n,m=1\ n=m}}^\infty \r_f(n) I_{l,t}(m) \left(\frac{n}{m}\right)^{iv} V\left(\frac{n}{t}\right) U\left(\frac{m}{t}\right) \mathrm{d}v\\
&= S_l^+(t) + S_l^-(t),
\end{align*}
where $U$ is a smooth functions supported in $[1/2,5/2]$, with $U(x) = 1 $ for $x\in$ Supp$(V)$ and $U^{(j)} \ll_j 1$, and 
\small
\begin{align*}
S_l^\pm(t) = & \frac{1}{K} \int_0^1 \int_\R V\left(\frac{v}{K}\right) \sideset{}{^*}\sum_{1\leq q \leq Q <a \leq Q+q} \frac{1}{aq}\\
&\times \sum_{n,m=1}^\infty \r_f(n)n^{iv} I_{l,t}(m) m^{-iv} e\left(\pm \frac{(n-m)\bar{a}}{q} \mp \frac{(n-m)x}{aq}\right) V\left(\frac{n}{t}\right)U\left(\frac{m}{t}\right) \mathrm{d}v\mathrm{d}x.
\end{align*} 
\normalsize
We will now describe the analysis for $S_l^+(t)$ (the analysis for $S_l^-(t)$ being completely analogous). 

\subsection{Summation formulae}
We start with the $m$-sum, which we split into congruence classes mod $q$, and after applying Poisson summation, we obtain
\begin{align*}
&\sum_{m=1}^\infty I_{l,t}(m) m^{-iv} U\left(\frac{m}{t}\right) e\left(-\frac{m\bar{a}}{q}\right) e\left(\frac{mx}{aq}\right) \\
&= \sum_{\substack{m\in \Z \\ m\equiv \bar{a} \mods q}} \frac{t^{1-\sigma-iv}}{2\pi} \int_\R t^{-i\nu} M_t(\sigma + i\nu) W_l(\nu) U^\dagger\left(\frac{t(ma-x)}{aq},1-\sigma - i(\nu +v)\right) \mathrm{d}\nu.
\end{align*}
We now note that  since $|\nu| \in [t^{1-\epsilon}, t^{1+\epsilon}]$, we may as in \cite{MR3369905} use Lemma \ref{Munshilemma} to deduce that only the contribution from  $1\leq |m|\ll qt^\epsilon$ is non-negligible. We take a dyadic subdivision to obtain the following.

\begin{lemma}
$$S_l^+(t)= \frac{t^{1-\sigma}}{K} \sum_{1\leq C\leq (t/K)^{1/2}} S_l(t,C) + O(t^{-1000}),$$
where $C$ runs over dyadic integers and
\begin{align*}
&S_l(t,C)= \frac{1}{2\pi} \int_\R \int_0^1 \int_\R M_t(\sigma+i\nu) W_l(\nu) t^{-i(v+\nu)} V\left(\frac{v}{K}\right) \sum_{C<q\leq 2C} \sum_{\substack{(m,q)=1 \\ 1\leq |m| \ll qt^\epsilon}} \frac{1}{aq} \\
&\times U^\dagger\left(\frac{t(ma-x)}{aq},1-\sigma-i(v+\nu)\right)  \sum_{n=1}^\infty \r_f(n) n^{iv} e\left(\frac{nm}{q}\right) e\left(-\frac{nx}{aq}\right) V\left(\frac{n}{t}\right) \mathrm{d}v\mathrm{d}x\mathrm{d}\nu
\end{align*} 
and $a=a_Q(m,q)$ is the unique multiplicative inverse of $m \mod q$ in $(Q,q+Q]$.
\end{lemma}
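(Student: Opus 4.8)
The plan is to dispatch the $m$-sum in $S_l^+(t)$ by Poisson summation in residue classes modulo $q$, in the spirit of \cite{MR3369905}. Collecting the $m$-dependent factors (for the $+$ choice of signs in $e(\pm(n-m)\bar{a}/q\mp(n-m)x/(aq))$ the relevant part is $e(-m\bar{a}/q)e(mx/(aq))$), it suffices to understand
$$\mathcal{M} := \sum_{m=1}^\infty I_{l,t}(m)\,m^{-iv}\,U\!\left(\frac{m}{t}\right)e\!\left(-\frac{m\bar{a}}{q}\right)e\!\left(\frac{mx}{aq}\right).$$
Substituting the spectral representation $I_{l,t}(m)=\frac{1}{2\pi}\int_\R M_t(\sigma+i\nu)W_l(\nu)\,m^{-\sigma-i\nu}\,\mathrm{d}\nu$ (with $M_t=g_te(f_t)$) and interchanging the (absolutely convergent, bounded) $\nu$-integral with the (finite, since $U$ has compact support) $m$-sum, we are reduced to treating $\sum_{m\ge 1}m^{-\sigma-i(\nu+v)}U(m/t)e(-m\bar{a}/q)e(mx/(aq))$; the factor $1/(aq)$ and the integrals in $v$ and $x$ are merely carried along.

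Extending the $m$-sum to all of $\Z$ (harmless, as $U(m/t)$ forces $m\asymp t$) and breaking it into classes $m\equiv b\,(q)$, Poisson summation together with orthogonality of additive characters collapses the dual sum to the congruence $k\equiv\bar{a}\,(q)$, while the Fourier transform of $y\mapsto y^{-\sigma-i(\nu+v)}U(y/t)e(yx/(aq))$ at $k/q$ equals, after the change of variables $y=tw$, exactly $t^{1-\sigma-i(\nu+v)}\,U^\dagger\!\left(\frac{t(ka-x)}{aq},\,1-\sigma-i(\nu+v)\right)$. Hence
$$\mathcal{M}=\frac{t^{1-\sigma-iv}}{2\pi}\int_\R t^{-i\nu}M_t(\sigma+i\nu)W_l(\nu)\sum_{k\equiv\bar{a}\,(q)}U^\dagger\!\left(\frac{t(ka-x)}{aq},\,1-\sigma-i(\nu+v)\right)\mathrm{d}\nu.$$
Since $(a,q)=1$, every surviving $k$ has $(k,q)=1$; and in the dyadic decomposition $C<q\le 2C$, $C\ge 1$, one always has $q\ge 2$, so $k\equiv\bar{a}\not\equiv 0\,(q)$ forces $|k|\ge 1$.

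Next I would truncate the $k$-sum using Lemma \ref{Munshilemma}. In $U^\dagger(r,s)$ here $s=1-\sigma-i(\nu+v)$, and on the support of $W_l$ we have $|\Im s|=|\nu+v|\asymp|\nu|\asymp t^{1\pm\epsilon}$ because $|v|\le 2K\le 2t^{1-\epsilon'}\ll|\nu|$; also, since $a\asymp Q$, $x\in[0,1]$ and $Q$ is large, $|ka-x|\asymp|k|Q$ for $|k|\ge 1$, whence $|r|=\frac{t|ka-x|}{aq}\asymp\frac{t|k|}{q}$. Therefore $(1+|\Im s|)/|r|\ll t^{\epsilon}q/|k|$, which is $\ll t^{-\epsilon}$ once $|k|\gg qt^{2\epsilon}$, so by Lemma \ref{Munshilemma} (taking $j$ large enough to beat all the polynomial-in-$t$ factors) the contribution of $|k|\gg qt^{2\epsilon}$ is $O(t^{-1000})$; after renaming $2\epsilon\mapsto\epsilon$ only $1\le|k|\ll qt^\epsilon$ remains. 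Finally I would reparametrize: for each $k$ with $(k,q)=1$ the conditions on $a$ (coprime to $q$, in $(Q,Q+q]$, with $\bar{a}\equiv k\,(q)$) determine $a=a_Q(k,q)$ uniquely, and $\bar{a}\equiv k\,(q)$ converts the factor $e(n\bar{a}/q)$ in the $n$-sum into $e(nk/q)$. Pulling $t^{1-\sigma}$ out front, combining $t^{-iv}t^{-i\nu}=t^{-i(v+\nu)}$, moving $\frac{1}{2\pi}\int\mathrm{d}\nu$ to the outside, renaming $k\mapsto m$, and splitting $\sum_q$ into dyadic blocks $C<q\le 2C$ produces exactly $\frac{t^{1-\sigma}}{K}\sum_{1\le C\le (t/K)^{1/2}}S_l(t,C)$, with total error $O(t^{-1000})$.

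I expect the truncation step to be the main obstacle: one must verify carefully that on the relevant ranges the oscillatory integral $U^\dagger$ really has a large frequency ($|r|\gg t^{1+\epsilon}$ as soon as $|k|$ exceeds $q$ by a small power of $t$) while its length parameter stays $\asymp t$, so that Lemma \ref{Munshilemma} yields an arbitrarily large power saving; by comparison, the bookkeeping of the Poisson reparametrization $a\leftrightarrow\bar{m}$ and of the resulting phases is routine, though it must be tracked with care so that the $n$-sum emerges with $e(nm/q)$ rather than $e(n\bar{m}/q)$.
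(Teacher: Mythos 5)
Your argument mirrors the paper's proof: Poisson summation in $m$ modulo $q$, truncation of the dual frequency via Lemma \ref{Munshilemma} using $|\nu+v|\asymp t^{1\pm\epsilon}$ and $|r|\asymp t|k|/q$, and the reparametrization $a = a_Q(m,q)$ converting $e(n\bar a/q)$ into $e(nm/q)$, followed by a dyadic split in $q$. The only difference is that you spell out the truncation estimate that the paper cites from \cite{MR3369905} without detail, which is a welcome clarification rather than a deviation.
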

We wish to use the Voronoi summation on the $n$-sum. We quote from \cite{MR1915038}  the following formula. 
\begin{lemma}
Let $g$ be a Hecke-Maass form over $\SL_2(\Z)$ and spectral parameter $t_g$. Let $F$ be a smooth function decaying at infinity, which vanishes in a neighborhood of the origin. Then, for $(a,c)=1$, we have
\begin{align*}
\sum_{n\geq 1} \r_g(n) e\left(\frac{an}{c}\right) F(n) 
&=\frac{1}{c}\sum_{\pm}\sum_{n\geq 1} \r_f(\mp n) e\left(\pm \frac{n\bar{a}}{c}\right)V^\pm\left(\frac{n}{c^2}\right),
\end{align*}
where
\begin{align*}
V^-(y)&= \int_0^\infty F(x) J_g(4\pi\sqrt{xy}) \mathrm{d}x\\
V^+(y) &= \int_0^\infty F(x) K_g(4\pi\sqrt{xy})\mathrm{d}x,
\end{align*}
and
\begin{align*}
J_g(x)&= -\frac{\pi}{\sin(\pi it_g)}\left(J_{2it_g}(x)-J_{-2it_g}(x)\right),
\end{align*}
and
\begin{align*}
K_g(x)&= 4\cos(\pi it_g) K_{2it_g}(x).
\end{align*}

\end{lemma}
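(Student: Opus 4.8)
The statement to prove is the Voronoi summation formula for a Hecke--Maass form, which is classical; the plan is to reduce it to the functional equation of the relevant Dirichlet series via a Mellin-transform argument. First I would attach the additive character to the form: the twisted series $D(s,a/c) = \sum_{n\ge 1} \r_g(n) e(an/c) n^{-s}$ is, up to the factor $c^{-2s}$, a linear combination of the standard $L$-functions $L(g \otimes \psi, s)$ over Dirichlet characters $\psi \bmod c$, each of which has an analytic continuation and a functional equation relating $s$ to $1-s$ with a gamma factor built from $\Gamma(\frac{s+it_g}{2})\Gamma(\frac{s-it_g}{2})$ and a sign encoding the Gauss sums; collecting these one obtains a functional equation for $D(s,a/c)$ itself relating it to $D(1-s, -\bar a/c)$, with the $\pm$ split coming from the even/odd decomposition of the completed $L$-function (the two archimedean kernels, reflecting the Bessel-$J$ versus Bessel-$K$ alternative). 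This is where the asymmetry between $V^+$ and $V^-$ and the appearance of $J_g$ and $K_g$ originates.

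Next I would write $F(n)$ via its Mellin transform, $F(n) = \frac{1}{2\pi i}\int_{(\sigma)} \tilde F(s) n^{-s}\,ds$ with $\sigma$ large, so that
$$\sum_{n\ge 1} \r_g(n) e\Bigl(\frac{an}{c}\Bigr) F(n) = \frac{1}{2\pi i}\int_{(\sigma)} \tilde F(s)\, D(s,a/c)\,ds.$$
Since $F$ vanishes near $0$ and decays at infinity, $\tilde F$ is entire and of rapid decay in vertical strips, so one may shift the contour freely. I would then move the line of integration to $\Re(s) = 1-\sigma$ (to the left of all poles — here there are none since $g$ is cuspidal, so no residual term appears), apply the functional equation to replace $D(s,a/c)$ by the dual series in $1-s$ at argument $-\bar a/c$, and re-expand the dual series term by term. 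After the change of variable $s \mapsto 1-s$ the $s$-integral for each dual term $n$ becomes exactly the Mellin--Barnes integral representation of $\int_0^\infty F(x)\, \mathcal{B}_g(4\pi\sqrt{nx/c^2})\,dx$, where $\mathcal{B}_g$ is the Bessel kernel ($J_g$ or $K_g$) whose Mellin transform is the ratio of gamma factors appearing in the functional equation; the standard Mellin-transform identities for $J_\nu$ and $K_\nu$ (e.g. $\int_0^\infty J_\nu(x) x^{s-1}\,dx = 2^{s-1}\Gamma(\frac{s+\nu}{2})/\Gamma(\frac{2-s+\nu}{2})$ and the analogue for $K_\nu$) identify the kernels as stated, including the $\sin(\pi i t_g)$ and $\cos(\pi i t_g)$ normalizations.

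The main obstacle is bookkeeping the gamma factors and root numbers correctly so that the two pieces of the functional equation assemble into precisely $J_g$ and $K_g$ with the right constants, and justifying the contour shift and term-by-term re-summation of the dual series (which requires a convexity/subconvexity bound — or merely the trivial bound on $\r_g(n)$ and Stirling for the gamma factors — to control $D(1-s,-\bar a/c)$ uniformly on vertical lines, together with the rapid decay of $\tilde F$). I would also need the elementary fact that $a_Q(m,q)$-type inverses behave well, i.e. that $e(an/c)$ with $(a,c)=1$ transforms into $e(\mp n\bar a/c)$ under the functional equation — this is the standard reciprocity in the Gauss sums. None of these steps is deep, but the sign- and gamma-factor accounting is the part where errors creep in, so I would cross-check the final kernels against the classical Voronoi formula for holomorphic forms (replacing $\Gamma(\frac{s\pm it_g}{2})$ by $\Gamma(s + \frac{k-1}{2})$) as a sanity check.
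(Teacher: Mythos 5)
The paper does not prove this lemma: it is quoted verbatim from the reference \cite{MR1915038} (Kowalski--Michel--Vanderkam), where the Voronoi formula for level $1$ Maass forms is derived directly from the automorphy of $g$ under $\SL_2(\Z)$ — one integrates $g$ along a vertical line through the cusp $a/c$, uses $g(\gamma z)=g(z)$ for a matrix $\gamma$ with bottom row $(c,\, -a)$ to relate the integral to one through $-\bar a/c$, and reads off the functional equation of the additively twisted $L$-series directly, with no recourse to a multiplicative character expansion. So there is no "paper proof" to compare against; what you have written is an independent route.

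That route, as written, has a genuine gap at the very first step. You claim that $D(s,a/c)=\sum_{n\ge1}\r_g(n)e(an/c)n^{-s}$ is (up to $c^{-2s}$) a linear combination of the $L(g\otimes\psi,s)$ over Dirichlet characters $\psi\bmod c$. This fails for $n$ with $(n,c)>1$: there $\psi(n)=0$ for every $\psi\bmod c$, while $e(an/c)\ne 0$ and $\r_g(n)\ne 0$ in general for a level-$1$ form. The additive character expands via Gauss sums into characters of every modulus $c'\mid c$, each weighted by a divisor condition, so $D(s,a/c)$ is really a two-dimensional sum over $(c',\psi \bmod c')$ involving $L(g\otimes\psi,s)$ restricted to arithmetic progressions, not a clean linear combination of twisted $L$-functions. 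Assembling a usable functional equation from that requires either substantial extra bookkeeping or an inductive argument on $c$, and getting the dual twist $-\bar a/c$ and the uniform $c^2$-conductor dependence out of it is precisely the delicate part you have waved away. The direct automorphy argument avoids all of this, which is why the literature uses it.

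Once the functional equation for the \emph{additively} twisted series is in hand, the remainder of your sketch (Mellin inversion of $F$, contour shift with no poles since $g$ is cuspidal, term-by-term re-summation justified by Stirling plus polynomial bounds on $\r_g(n)$, and identification of the two archimedean kernels with $J_g$ and $K_g$ via the Mellin transforms of $J_\nu$ and $K_\nu$) is correct and standard. But as presented, the argument does not close the first step, and you should either prove the functional equation for $D(s,a/c)$ directly from the automorphy of $g$, or spell out carefully how the character decomposition handles $(n,c)>1$ and composite $c$.
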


We now use \cite[p. 326, 331]{tablestransf} that
\begin{align*}
K_{2ir}(x)&=\frac{1}{4}\frac{1}{2\pi i} \int_{(\sigma')} \left(\frac{x}{2}\right)^{-s} \Gamma\left(\frac{s}{2}+ir\right)\Gamma\left(\frac{s}{2}-ir\right) \mathrm{d}s, & |\Re(2ir)| < \sigma'\\
J_{2ir}(x)&=\frac{1}{2}\frac{1}{2\pi i} \int_{(\sigma')} \left(\frac{x}{2}\right)^{-s} \frac{\Gamma(s/2+ir)}{\Gamma(1-s/2+ir)}\mathrm{d}s,& -\Re(2ir) < \sigma' <1,
\end{align*}
and define
\begin{align*}
\gamma_-(s)&= \frac{-\pi}{4\pi i \sin(\pi it_g)} \left\{\frac{\Gamma(s/2 +i t_g)}{\Gamma(1-s/2 + i t_g)} - \frac{\Gamma(s/2- i t_g)}{\Gamma(1-s/2-it_g)}\right\}\\
\gamma_+(s)&= \frac{4\cos(\pi i t_g)}{8\pi i} \Gamma\left(\frac{s}{2}+i t_g\right)\Gamma\left(\frac{s}{2} - it_g\right) 
\end{align*}
to deduce that for any $0<\sigma' <1$,
$$V^-(y)=  \int_0^\infty F(x) \int_{(\sigma')} (2\pi \sqrt{xy})^{-s} \gamma_-(s) \mathrm{d}s \mathrm{d}x,$$
and
$$V^+(y)=  \int_0^\infty F(x) \int_{(\sigma')} (2\pi \sqrt{xy})^{-s} \gamma_+(s)  \mathrm{d}s \mathrm{d}x.$$
We are now ready to plug all of this into effect. 
\small
\begin{align*}
&\sum_{n\geq 1} \r_f(n) e\left(\frac{nm}{q}\right) n^{iv} e\left(\frac{-nx}{aq}\right) V\left(\frac{n}{t}\right)
&= \frac{t^{1+iv}}{q} \sum_\pm \sum_{n\geq 1} \r_f(\mp n) e\left(\pm \frac{n\bar{m}}{q}\right) I(n,q,v,x),
\end{align*}
\normalsize
where 
\begin{align*}
I(n,q,v,x)& = \int_{(\sigma')} \left(\frac{2\pi\sqrt{nt}}{q}\right)^{-s} \gamma_\pm(s) \int_0^\infty y^{iv}e\left(\frac{-tyx}{aq}\right)V(y) y^{-s/2} \mathrm{d}y\mathrm{d}s\\
&= \int_{(\sigma')} \left(\frac{2\pi  \sqrt{nt}}{q}\right)^{-s} \gamma_\pm(s) V^\dagger\left(\frac{tx}{aq},1+iv-s/2\right)\mathrm{d}s.
\end{align*}
By Stirling's formula:
\begin{align*}
&\Gamma(\sigma' + it)\\
&= \sqrt{2\pi} \exp\left(\frac{-\pi |t|}{2}\right)|t|^{\sigma' - 1/2} \left|\frac{t}{e}\right|^{it} \exp(\hbox{sign}(t)i\pi(\sigma'-1/2)/2)(1+O(|t|^{-1})),
\end{align*}
for $|t| \geq 1$ and bounded $\sigma'$, we deduce that
$$\gamma_\pm(\sigma' + i\tau) \ll 1 + |\tau|^{\sigma'-1}. $$
Now, by Lemma \ref{Munshilemma}, 
$$V^\dagger\left(\frac{tx}{aq},1+iv-s/2\right) \ll \min\left\{1, \left(\frac{ (Kt)^{1/2}}{|v-\tau/2| q}\right)^j\right\}.$$
Thus, shifting the contour to $\sigma'=M$ a large positive integer and taking $j=  M +1$ for instance, we see that if $n\gg K t^\epsilon$, then
the integral is negligible (by splitting the integral into a box around $|v-\frac{\tau}{2}|q\leq (Kt)^{1/2}$ and its complement). In the remaining range, we study this more closely. We shift our contour to $\sigma=1$ (the $\gamma_+$ contribution is trivial, so we only consider $\gamma_-$), and note that \begin{align*}
\gamma_-(i\tau+1)&= \left(\frac{|\tau|}{2e}\right)^{i\tau} \Phi_-(\tau),
\end{align*}
where $\Phi_-'(\tau) \ll |\tau|^{-1}.$ We thus have
\small
\begin{align*}
I(n,q,v,x)&= \frac{qi}{2\pi\sqrt{nt}} \sum_{J\in \mathcal{J}} \int_\R \left(\frac{2\pi \sqrt{nt}}{q}\right)^{-i\tau} \gamma_\pm(i\tau+1) V^\dagger\left(\frac{tx}{aq},\frac{1}{2}+i(v-\tau/2)\right) W_J(\tau)\mathrm{d}\tau \\& + O(t^{-1000}),
\end{align*}
\normalsize
where $\mathcal{J}$ is a collection of $O(\log t)$ integers such that $J\in \mathcal{J}$ if and only if Supp$W_J \subset [-(tK)^{1/2}t^\epsilon/C,(tK)^{1/2}t^\epsilon/C]$. We have proven the following:

\begin{lemma}\label{lemmastc}
\small
\begin{align*}
S_l(t,C)&= \frac{iKt^{1/2}}{4\pi^2} \sum_\pm \sum_{J\in \mathcal{J}} \sum_{n\ll Kt^\epsilon} \frac{\r_f(\mp n)}{\sqrt{n}} \sum_{C<q\leq 2C} \sum_{\substack{(m,q)=1\\ 1\leq |m| \ll qt^\epsilon}} \frac{e\left(\pm \frac{n\bar{m}}{q}\right)}{aq} I^*_\pm(q,m,n)\\
& + O(t^{-10000}),
\end{align*}
\normalsize
where
\small
\begin{align*}
I_\pm^*(q,m,n) &= \int_{\R^2} M_t(\sigma+i\nu)W_l(\nu) t^{-i\nu} \left(\frac{2\pi \sqrt{nt}}{q}\right)^{-i\tau} \gamma_\pm (i\tau+1) I^{**}(q,m,\tau,\nu) W_J(\tau) \mathrm{d}\tau \mathrm{d}\nu,
\end{align*}
\normalsize
and 
\small
\begin{align*}
I^{**}(q,m,\tau,\nu) &= \int_0^1 \int_\R V(v) V^\dagger\left(\frac{tx}{aq},i\left(kv-\frac{\tau}{2}\right)+\frac{1}{2}\right)\\
&\hspace{1 cm} \times U^\dagger\left(\frac{t(ma-x)}{aq},1-\sigma-i(Kv+\nu)\right) \mathrm{d}v\mathrm{d}x.
\end{align*}
\end{lemma}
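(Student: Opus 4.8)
The proof consists of collecting the computations carried out on the preceding pages into a single identity, so the plan is largely bookkeeping, with the genuine content concentrated in two truncation steps. I would organize it as follows. Starting from the expression for $S_l(t,C)$ in the previous lemma, whose innermost sum is $\sum_{n\geq1}\r_f(n)n^{iv}e(nm/q)e(-nx/(aq))V(n/t)$, apply the Voronoi summation formula quoted from \cite{MR1915038} in the variable $n$: modulus $q$, additive character $e(nm/q)$ (admissible since $(m,q)=1$), and smooth weight $F(y)=y^{iv}e(-yx/(aq))V(y/t)$. Rescaling $y\mapsto y/t$ in the resulting Bessel transform produces the factor $t^{1+iv}/q$ and the argument $2\pi\sqrt{nt}/q$; substituting the Mellin--Barnes representations of $K_g$ and $J_g$ recorded above then turns that transform into $I(n,q,v,x)=\int_{(\sigma')}(2\pi\sqrt{nt}/q)^{-s}\gamma_\pm(s)\,V^\dagger(tx/(aq),1+iv-s/2)\,\mathrm{d}s$, with $\gamma_\pm$ as defined.

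The two truncations come next. Combining Stirling's formula, which gives $\gamma_\pm(\sigma'+i\tau)\ll1+|\tau|^{\sigma'-1}$, with Lemma \ref{Munshilemma} for $V^\dagger(tx/(aq),1+iv-s/2)$ (whose first argument is of size $\asymp(Kt)^{1/2}x/q$ because $a\asymp Q=(t/K)^{1/2}$), and shifting the $s$-contour far to the right, I would show that the $\gamma_+$ term contributes $O(t^{-10000})$ outright --- here the rapid decay of $K_g$ does the work --- and that in the $\gamma_-$ term only $n\ll Kt^\epsilon$ survives. For such $n$ I would move the $\gamma_-$-contour back to $\Re(s)=1$, noting that no poles are crossed (all poles of $\gamma_-$ have non-positive real part), write $\gamma_-(1+i\tau)=\left(\frac{|\tau|}{2e}\right)^{i\tau}\Phi_-(\tau)$ with $\Phi_-'(\tau)\ll|\tau|^{-1}$ via Stirling so that the oscillation sits entirely in the explicit factor, and invoke Lemma \ref{Munshilemma} once more --- now for $V^\dagger(tx/(aq),\frac{1}{2}+i(v-\tau/2))$ --- to localise $\tau$ to $|\tau|\ll(tK)^{1/2}t^\epsilon/C$, which I then decompose into the $O(\log t)$ dyadic ranges indexed by $\mathcal{J}$.

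Finally I would reassemble the pieces: rescaling $v\mapsto Kv$ in the remaining $v$-integral supplies the factor $K$ in the prefactor $iKt^{1/2}/(4\pi^2)$ (the $i/(4\pi^2)$ being the product of the $i/(2\pi)$ appearing in $I(n,q,v,x)$ with the $1/(2\pi)$ in front of $S_l(t,C)$, and the $t^{1/2}$ coming from $t^{1+iv}/q$ against $q/\sqrt{nt}$ once the factor $n^{-1/2}$ is absorbed into $\r_f(\mp n)/\sqrt{n}$), after which the $v$- and $x$-integrals form $I^{**}(q,m,\tau,\nu)$, the $\tau$- and $\nu$-integrals form $I^*_\pm(q,m,n)$, and $e(\pm n\bar{m}/q)/(aq)$ stays in the outer sum. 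The step I expect to be the main obstacle is the first truncation: one must check that the Stirling growth of $\gamma_\pm$ is genuinely beaten by the decay furnished by Lemma \ref{Munshilemma}, uniformly over the dyadic range $C\leq(t/K)^{1/2}$ of $q$ and over $x\in[0,1]$ --- and in particular to handle the region $x$ near $0$, where the first argument of $V^\dagger$ degenerates, one falls back on the support of $V$ and $U$ and on the alternative estimate in Lemma \ref{Munshilemma}.
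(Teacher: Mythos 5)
Your proposal reproduces the paper's proof essentially step by step: Voronoi summation in the $n$-variable, substitution of the Mellin--Barnes representations of $J_g$ and $K_g$, the crude Stirling bound $\gamma_\pm(\sigma'+i\tau)\ll 1+|\tau|^{\sigma'-1}$ together with Lemma~\ref{Munshilemma} and a contour shift to $\Re(s)=M$ to truncate to $n\ll Kt^\epsilon$, the shift back to $\Re(s)=1$ with the phase-amplitude decomposition $\gamma_-(1+i\tau)=(|\tau|/2e)^{i\tau}\Phi_-(\tau)$, the localisation of $\tau$ to $|\tau|\ll (tK)^{1/2}t^\epsilon/C$ via Lemma~\ref{Munshilemma}, the dyadic decomposition over $\mathcal J$, and the final bookkeeping of constants (including the correct accounting of $iKt^{1/2}/(4\pi^2)$ via the rescaling $v\mapsto Kv$ and the cancellation of $t^{iv}$). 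The one place where you slightly over-reach is in asserting that the $\gamma_+$ term is $O(t^{-10000})$ outright via the exponential decay of $K_g$; this is in fact true, since the argument $4\pi\sqrt{xy}\gg K^{1/2}\gg t^{\epsilon'/2}$, but the paper formally retains the $\sum_\pm$ in the lemma statement and only remarks that the $\gamma_+$ contribution is "trivial", so your version proves a marginally sharper statement than what is asked for without changing the method.
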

\normalsize
In the next two subsections we evaluate $I^{**}(q,m,\tau,\nu)$.
\subsection{Analysis of the integrals}
We apply lemma \ref{newstat} to
\begin{align*}
&U^\dagger\left(\frac{t(ma-x)}{aq},1-\sigma-i(Kv+\nu)\right)\\&=e\left(\frac{1}{8}\right)\left(\frac{Kv+\nu}{2\pi}\right)^{1/2-\sigma} \left(\frac{aq}{t(x-ma)}\right)^{1-\sigma} \left(\frac{(Kv+\nu)aq}{2\pi et(x-ma)}\right)^{-i(Kv+\nu)} \\
&\hspace{5 cm}\times \check{U}\left(\frac{(Kv+\nu)aq}{2\pi t(x-ma)}\right) + O(t^{-5/2}).
\end{align*}
Hence,
\small
\begin{align*}
I^{**}(q,m,\tau,\nu) =& c_1 \int_0^1 \int_\R V(v) V^\dagger\left(\frac{tx}{aq},i\left(Kv-\frac{\tau}{2}\right) +\frac{1}{2}\right) (Kv+\nu)^{1/2-\sigma} \left(\frac{aq}{t(x-ma)}\right)^{1-\sigma} \\
&\times \left(\frac{(Kv+\nu)aq}{2\pi et(x-ma)}\right)^{-i(Kv+\nu)} \check{U}\left(\frac{(Kv+\nu)aq}{2\pi t(x-ma)}\right) \mathrm{d}v\mathrm{d}x+O(t^{-5/2}), 
\end{align*}
\normalsize
for some constant $c_1$. We now use lemma 5 of \cite{MR3369905} to
\small
 \begin{align*}
V^\dagger\left(\frac{tx}{aq}, i(Kv-\tau/2)+\frac{1}{2}\right) & = \frac{(aq)^{1/2} e(-\frac{1}{8})}{(tx)^{1/2}}  \left(\frac{(Kv-\frac{\tau}{2})aq}{2e\pi tx}\right)^{i(Kv-\frac{\tau}{2})} V\left(\frac{(Kv-\tau/2)aq}{2\pi tx}\right)\\
& + O\left(\min\left\{|Kv-\tau/2|^{-3/2},\left(\frac{tx}{aq}\right)^{-3/2}\right\}\right). 
\end{align*}
\normalsize
Hence,
\small
\begin{align*}
I^{**}(q,m,\tau,\nu) = &c_2 \int_0^1\int_\R V(v) V\left(\frac{(Kv-\frac{\tau}{2})aq}{2\pi tx}\right) \left(\frac{(Kv-\frac{\tau}{2})aq}{2e\pi tx}\right)^{i(Kv-\frac{\tau}{2})} \check{U}\left(\frac{(kv+\nu)aq}{2\pi t(x-ma)}\right)\\
& \left(\frac{aq}{t}\right)^{\frac{3}{2}-\sigma} \frac{(\nu+Kv)^{\frac{1}{2}-\sigma}}{(x-ma)^{1-\sigma}} \left(\frac{(Kv+\nu)aq}{2\pi et(x-ma)}\right)^{-i(Kv+\nu)} \mathrm{d}v\frac{\mathrm{d}x}{x^{1/2}} +E+O(t^{-\frac{5}{2}}),
\end{align*}
\normalsize
for some constant $c_2$ and where $E$ comes from the error term of $V^\dagger$ which we will now describe. We first note that since $V^\dagger \left(\frac{tx}{aq}, i(Kv-\tau/2)+\frac{1}{2}\right)$ does not depend on $\nu$, neither does the error term, and therefore we may perform the $\nu$-integral without losing control of the phase, before plugging absolute values. We thus estimate
\small
\begin{align*}
&\int_\R M_t(\sigma+i\nu) W_l(\nu) t^{-i\nu} (Kv+\nu)^{1/2-\sigma} \left(\frac{(Kv+\nu)aq}{2\pi et(x-ma)}\right)^{-i(Kv+\nu)} \check{U}\left(\frac{(Kv+\nu)aq}{2\pi t(x-ma)}\right) \mathrm{d}\nu\\
&= \int_\R g(\nu)e(f(\nu))\mathrm{d}\nu,
\end{align*}
\normalsize
where, temporarily, we define
$$g(\nu)=g_t(\sigma +i\nu) W_l(\nu) (Kv+\nu)^{1/2-\sigma} \check{U}\left(\frac{(Kv+\nu)aq}{2\pi t(x-ma)}\right),$$
and
$$2\pi f(\nu)= 2\pi f_t(\sigma+i\nu) - \nu\log t - (Kv+\nu) \log\left|\frac{(Kv+\nu)aq}{2\pi et(x-ma)}\right|.$$
We have 
$$2\pi f''(\nu) = 2\pi f_t''(\sigma +i\nu) - \frac{1}{Kv+\nu} \gg \nu^{-1},$$
by (\ref{condad}). Noting that  $g(\nu) \ll  1,$ and $\int |g'(\nu)| \ll t^\epsilon$, we may use the second derivative bound for oscillatory integrals (see \cite{Srinstat}, Lemma 5) to deduce that 
\begin{equation}\label{extint}
\int_\R g(\nu)e(f(\nu)) \mathrm{d}\nu \ll t^{1/2+\epsilon}.
\end{equation}
Our error term, $E$, therefore satisfies 
\begin{align*}
&\int_\R M_t(\sigma +i\nu) W_l(\nu) t^{-i\nu} E \mathrm{d}\nu\\
&\ll t^{\sigma-1/2+\epsilon} \int_0^1\int_1^2 \min\left\{ \left|Kv-\frac{\tau}{2}\right|^{-3/2}, \left(\frac{tx}{aq}\right)^{-3/2}\right\} \mathrm{d}v\mathrm{d}x.\\
\end{align*} 
This integral is the same than the one appearing in \cite{MR3369905}, where it is proved that 
$$\int_0^1\int_1^2 \min\left\{ \left|Kv-\frac{\tau}{2}\right|^{-3/2}, \left(\frac{tx}{aq}\right)^{-3/2}\right\} \mathrm{d}v\mathrm{d}x \ll \frac{1}{K^{3/2}} \min\left\{1,\frac{10K}{|\tau|}\right\}t^\epsilon.$$
Moreover, we note that 
$$\int_\R g_t(\sigma + i\nu) W_l(\nu) t^{-5/2} \ll t^{-2+\sigma},$$
and thus (keeping in mind that $t^\epsilon<K<t^{1-\epsilon}$),
$$\int_\R M_t(\sigma+i\nu)W_l(\nu) t^{-i\nu}(E+O(t^{-5/2})) \mathrm{d}\nu \ll \frac{t^{\sigma+\epsilon}}{t^{1/2}K^{3/2}} \min\left\{1,\frac{10K}{|\tau|}\right\}.$$
We now treat the main term. Let $\delta'>0$ to be determined later and examine the contribution from $x<1/K^{1-\delta'}$. Using (\ref{extint}) and that $u^\alpha \check{U}(u), v^\alpha V(v) \ll 1,$ for all $\alpha \in \R$, (and thus $t(x-ma)(aq)^{-1}\gg t^{1-\epsilon}$), we estimate
\small
\begin{align*}
&\left(\frac{aq}{t}\right)^{1/2} \int_0^{K^{\delta'-1}}\int_\R V(v) V\left(\frac{(Kv-\frac{\tau}{2})aq}{2\pi tx}\right) \left(\frac{aq}{t(x-ma)}\right)^{1-\sigma} \left|\int_\R g(\nu) e(f(\nu))\mathrm{d}\nu\right| \mathrm{d}v\frac{\mathrm{d}x}{x^{1/2}}\\
&\ll t^\epsilon \int_0^{K^{\delta'-1}} \int_{Kv-\frac{\tau}{2}\asymp \frac{tx}{aq}} V(v) \frac{t^\sigma}{t^{1/2} (Kv-\frac{\tau}{2})^{1/2}} \mathrm{d}v\mathrm{d}x
 \ll \frac{t^{1/2+\sigma+\epsilon}}{K^{3-\epsilon}aq},
\end{align*}  
\normalsize
upon taking $\delta' = 2\epsilon/3$. We now look at the contribution from $x\in [K^{\delta'-1},1].$ We now reset temporarily 
$$g(v)= (\nu+Kv)^{1/2-\sigma} \left( \frac{aq}{t(x-ma)}\right)^{1-\sigma} V(v)V\left(\frac{(Kv-\frac{\tau}{2})aq}{2\pi tx}\right) \check{U}\left(\frac{(Kv+\nu)aq}{2\pi t(x-ma)}\right),$$
and
$$f(v)= \frac{Kv-\frac{\tau}{2}}{2\pi} \log\left(\frac{(Kv-\frac{\tau}{2})aq}{2e\pi tx}\right) - \frac{Kv+\nu}{2\pi} \log\left(\frac{(Kv+\nu)aq}{2\pi et(x-ma)}\right).$$
 Then, 
 \small
$$f'(v)= -\frac{K}{2\pi} \log\left(\frac{(\nu+Kv)x}{(Kv-\frac{\tau}{2})(x-ma)}\right), f^{(j)}(v)= -\frac{(j-2)!(-K)^j}{2\pi(\nu+Kv)^{j-1}} + \frac{(j-2)!(-K)^j}{2\pi (Kv-\frac{\tau}{2})^{j-1}},$$
\normalsize
for $j\geq 2$, and the stationary point is given by
$$v_0= - \frac{(2\nu+\tau)x -\tau ma}{2Kma}.$$
Now, since $\nu \gg t^{1-\epsilon}$, we have that in the support of the integral,
$$f^{(j)} \asymp \frac{tx}{aq} \left(\frac{Kaq}{tx}\right)^j,$$
for $j \geq 2$, and 
$$g^{(j)}(v) \ll t^{-1/2+\epsilon}\left(1+\frac{Kaq}{tx}\right)^j,$$
for $j\geq 0$. Moreover, we can write
$$f'(v)= \frac{K}{2\pi} \log\left(1+\frac{K(v_0-v)}{\nu+Kv}\right)- \frac{K}{2\pi} \log\left(1+\frac{K(v_0-v)}{Kv-\tau/2}\right),$$
and note that in the support of the integral we have $0\leq Kv-\tau/2 \ll tx/aq \ll K^{1/2}t^{1/2}.$ It follows that if $v_0 \not\in [.5,3]$, then in the support of the integral we have
$$|f'(v)|\gg K \min\left\{1,\frac{Kaq}{tx}\right\}.$$
We now use Lemma 8.1 of \cite{BKY} with 
\begin{align*}
X=t^{-1/2+\epsilon}, & \,U(=V)= \min\left\{1,\frac{tx}{Kaq}\right\},& R=K\min\left\{1,\frac{Kaq}{tx}\right\},\\
&Y=\frac{tx}{aq},& Q=\frac{tx}{Kaq}, 
\end{align*}
so that, choosing $K>t^{1/3+\epsilon}$,
\begin{align*}
\int_\R g(v) e(f(v)) \mathrm{d}v&\ll t^{-1/2+\epsilon} \left[\left(\left(\frac{tx}{aq}\right)^{1/2} \min\left\{1,\frac{Kaq}{tx}\right\}\right)^{-A} +K^{-A}\right]\\
&\ll t^{-1/2 +\epsilon} \left[(t^{3\epsilon/4})^{-A} + (K^{\delta'})^{-A} + K^{-A}\right] \ll t^{-B},
\end{align*}
for any $B>0$. In the case where $v_0 \in [.5,3]$, we will use Lemma \ref{BKY}, with $\delta=1/100, A=10000 \delta'^{-1}$ and the same $X,Y,V$ and $Q$ as above. We have
$$\int_\R g(v)e(f(v)) \mathrm{d}v = \frac{e(f(v_0))}{\sqrt{2\pi f''(v_0)}} \sum_{n=0}^{300 A} p_n(v_0) +O_{\delta'}\left(\left(\frac{tx}{aq}\right)^{-A}\right)$$
where $p_n$ is given by (\ref{pn}). Now, since $x\in[K^{\delta'-1},1]$, we have $tx/aq\gg K^{\delta'},$ and therefore the error term is negligible. 

\subsection{Contribution from $n\geq 1$ terms}
We find that
$$f(v_0)= -\frac{\nu+\tau/2}{2\pi} \log\left(-\frac{(\nu+\tau/2)q}{2e\pi tm}\right),$$
and 
\begin{equation}\label{f2v0}
f''(v_0)=\frac{K^2(ma)^2}{2\pi (\nu+\tau/2)(x-ma)x},
\end{equation}
and
\begin{equation}\label{fjv0}
f^{(j)}(v_0) = \frac{(j-2)!(-K)^j(ma)^{j-1}((x-ma)^{j-1}+(-x)^{j-1})}{2\pi (\nu+\tau/2)^{j-1}(ma-x)^{j-1}x^{j-1}}.
\end{equation}
We also find
\begin{align}\label{gv0}
\nonumber g(v_0)&=\left(\frac{tm}{(\nu+\frac{\tau}{2})q}\right)^\sigma \left(\frac{aq}{t}\right) \left(\frac{-(\nu+\frac{\tau}{2})}{(x-ma)ma}\right)^{1/2}\\
&\times V\left(\frac{\tau}{2K}-\frac{(\nu+\frac{\tau}{2})x}{Kma}\right)\check{U}\left(\frac{-(\nu+\frac{\tau}{2})q}{2\pi tm}\right)V\left(-\frac{(\nu+\frac{\tau}{2})q}{m2\pi t}\right).
\end{align}
We wish to keep the term $n=0$ and show that the terms with $n\geq 1$ can be absorbed into an error term. We thus look to bound
\begin{align*}
\int_\R M_t(\sigma+i\nu) W_l(\nu) t^{-i\nu} \frac{e(f(v_0))}{\sqrt{f''(v_0)}} p_n(v_0) \mathrm{d}\nu = \int_\R \tilde{g_n}(\nu) e(\tilde{f}(\nu))\mathrm{d}\nu,
\end{align*}
where
$$\tilde{g_n}(\nu) := \frac{\sqrt{2\pi}(x-ma)^{1/2}x^{1/2}}{Kma} g_t(\sigma+i\nu) W_l(\nu) (\nu+\tau/2)^{1/2} p_n\left(-\frac{(2\nu+\tau)x-\tau ma}{2Kma}\right),$$
and 
$$\tilde{f}(\nu):=  f_t(\sigma +i\nu)- \frac{\nu}{2\pi} \log t- \frac{\nu+\frac{\tau}{2}}{2\pi} \log\left(-\frac{(\nu+\frac{\tau}{2})q}{2e\pi tm}\right).$$
We compute
$$\tilde{f}'(\nu) = f_t'(\sigma+i\nu) - \frac{\log t}{2\pi} - \frac{1}{2\pi} \log\left(-\frac{(\nu+\frac{\tau}{2})q}{2e\pi tm}\right)-\frac{1}{2\pi},$$
and 
$$\tilde{f}''(\nu) = f_t''(\sigma+i\nu) - \frac{1}{2\pi (\nu+\frac{\tau}{2})}.$$
In order to estimate the size of $\tilde{g_n}$, we estimate first
$$p_1(v_0) \ll \frac{g''(v_0)}{f''(v_0)} \ll \frac{XQ^2}{V^2Y},\hspace{1 cm} p_2(v_0)\ll \frac{XQ^4}{V^4Y^2} + \frac{XQ}{VY} + \frac{X}{Y},$$
while, by (\ref{pcontrol}), for $n\geq 3$  we have
$$p_n(v_0) \ll X\left(\left(\frac{V^2Y}{Q^2}\right)^{-n}+Y^{-n/3}\right).$$
We now distinguish two cases. If $x\leq \frac{Kaq}{t}$, then $V=Q= \frac{tx}{Kaq}$, and thus
$$p_n(v_0) \ll \frac{X}{Y},$$
for all $n\geq 1,$ since $Y= \frac{tx}{aq}\gg K^{\delta'}.$ We then show by (\ref{pcontrol}) that
$$\tilde{g_n}'(\nu)\ll \frac{(x-ma)^{1/2}x^{1/2}(\nu+\frac{\tau}{2})^{1/2}}{Kma\nu^{3/2-\sigma}}\frac{X}{Y},$$
so that by the second derivative bound for oscillatory integrals (using that $q\asymp m$, by the support of $\check{U}$),
$$\int_\R \tilde{g_n}(\nu)e(f(\nu))\mathrm{d}\nu \ll \frac{t^{\sigma+\epsilon}(aq)^{1/2} }{Ktx^{1/2}}.$$
Therefore the total contribution from this part is dominated by
$$\left(\frac{aq}{t}\right)^{1/2} \int_{K^{\delta'-1}}^1 \frac{t^{\sigma+\epsilon}(aq)^{1/2}}{Ktx} \mathrm{d}x \ll \frac{t^{\sigma+\epsilon}}{K^2t^{1/2}}.$$
For $x> \frac{Kaq}{t},$ we have $V=1$, and so 
$$p_n(v_0) \ll \frac{t^{1/2+\epsilon}x}{K^2aq}.$$
In this region, we first pass the $x$ integral inside the $\nu$-integral, and since the phase does not depend on $x$, the same analysis holds, replacing $\tilde{g}_n(\nu)$ by
\begin{align*}
\hat{g_n}(\nu):=\left(\frac{aq}{t}\right)^{1/2}\int_{\max\{K^{-1+\delta'},Kaq/t\}}^{1}\frac{1}{\sqrt{x}}  \tilde{g}_n(\nu) \mathrm{d}x.
\end{align*}
We have, using that $m\asymp q$,
$$\hat{g_n}(\nu)\ll  \left(\frac{aq}{t}\right)^{1/2}\tilde{g_n}(\nu) \ll \frac{t^{\sigma+\epsilon}}{K^3aq}.$$
In order to control $\hat{g_n}'(\nu),$ we will first execute the $x$-integral, using integration by parts. Looking at the definition of $p_n$, we note that it is a rational function in $f''(v_0),f'''(v_0),\cdots, g(v_0),g'(v_0)\cdots$ and will describe what the terms of $p_n$ depending on $x$ look like. We first recall that by (\ref{pn}) and (\ref{G}), 
$$p_n(v_0)=\frac{\sqrt{2\pi}e^{\pi i/4}}{n!} \left(\frac{i}{2\tilde{f}^{''}(v_0)}\right)^n G^{(2n)}(v_0) ,$$
where $G^{(2n)}(v_0)$ is a linear combination of elements of the form
$$\hat{g}_n^{(l_0)}(v_0) \tilde{f}^{(l_1)}(v_0)\cdots \tilde{f}^{(l_j)}(v_0),$$
where $l_0+\cdots +l_j= 2n.$ Using (\ref{f2v0}), (\ref{fjv0}) and (\ref{gv0}), we therefore have that those terms of $p_n$ depending on $x$ are of the shape
$$x^i (x-ma)^{j+1/2} V^{(l)}\left(\frac{\tau}{2K}-\frac{(\nu+\frac{\tau}{2})x}{Kma}\right),$$
for some $i,j\geq 1$ and $l\geq 0$. We thus compute
\small
\begin{align*}
&\frac{\mathrm{d}}{\mathrm{d}\nu}\int_{\max\{K^{-1+\delta'},Kaq/t\}}^1 x^{i-1/2} (x-ma)^{j+1/2} V^{(l)}\left(\frac{\tau}{2K}-\frac{(\nu+\tau/2)x}{Kma}\right) \mathrm{d}x\\
&=\frac{\mathrm{d}}{\mathrm{d}\nu} \left(\left[x^{i-1/2}(x-ma)^{j+1/2} \frac{-Kma}{t\nu+\tau/2}V^{(l-1)}\left(\frac{\tau}{2K}-\frac{(\nu+\tau/2)x}{Kma}\right)\right]_{\max\{K^{-1+\delta'},Kaq/t\}}^1\right.\\
&\left. + \int_{\max\{K^{-1+\delta'},Kaq/t\}}^1 (x^{i-1/2}(x-ma)^{j+1/2})' \frac{Kma}{t\nu+\tau/2}V^{(l-1)}\left(\frac{\tau}{2K}-\frac{(\nu+\tau/2)x}{Kma}\right)\right)\mathrm{d}x\\
&\ll \frac{(ma)^{j+1/2}}{\nu+\frac{\tau}{2}}.
\end{align*}
\normalsize
These calculations show that 
$$\int_\R \left|\frac{\mathrm{d}}{\mathrm{d}\nu}\hat{g_n}(\nu)\right| \mathrm{d}\nu \ll t^\epsilon \hat{g_n} \ll\frac{t^{\sigma+\epsilon}}{K^3aq}, $$
and by the second derivative bound for oscillatory integrals, 
$$\int_\R \hat{g_n}(\nu) e(\tilde{f}(\nu)) \mathrm{d}\nu \ll \frac{t^{1/2+\sigma + \epsilon}}{K^3aq},$$
which is the same bound we obtained for $x\in (0, K^{\delta'-1}).$ We therefore obtain
\begin{align*}
\left(\frac{aq}{t}\right)^{1/2}\int_0^1\int_\R g(v)e(f(v))\mathrm{d}v\frac{\mathrm{d}x}{x^{1/2}} &= \left(\frac{aq}{t}\right)^{1/2} \int_{K^{-1+\delta'}}^1 \frac{g(v_0)e(f(v_0)+1/8)}{x^{1/2}\sqrt{f''(v_0)}}\mathrm{d}x + E^{*},
\end{align*}
where $E^*$ is an error term such that
\begin{align*}
\int_\R M_t(\sigma+i\nu) W_l(\nu) t^{-i\nu} E^{*} \mathrm{d}\nu \ll \frac{t^{1/2+\sigma+\epsilon}}{aqK^3}.
\end{align*}
Now, plugging in the value for $v_0$, we get that the leading term above reduces to 
\begin{align*}
&c_3\frac{\nu+\frac{\tau}{2}}{K} \left(\frac{-q}{mt}\right)^{3/2}V\left(\frac{-(\nu+\frac{\tau}{2})q}{2\pi mt}\right) \left(-\frac{(\nu+\frac{\tau}{2})q}{2e\pi tm}\right)^{-i(\nu+\tau/2)} \left(\frac{tm}{(\nu+\frac{\tau}{2})q}\right)^\sigma \\
&\times \check{U}\left(-\frac{(\nu+\frac{\tau}{2})q}{2\pi tm}\right)  \int_{K^{-1+\delta'}}^1 V\left(\frac{\tau}{2K}-\frac{(\nu+\frac{\tau}{2})x}{Kma}\right)\mathrm{d}x,
\end{align*}
for some absolute constant $c_3$. Set 
$$B(C,\tau,\nu)= t^{-5/2}+ E+E^{*},$$
and note that
\begin{equation}\label{bctau}
\int_{-\frac{(tK)^{1/2}t^\epsilon}{C}}^{\frac{(TK)^{1/2}t^\epsilon}{C}} \int_\R M_t(\sigma+i\nu)W_l(\nu)t^{-i\nu}B(C,\tau,\nu)\mathrm{d}\nu\mathrm{d}\tau \ll \frac{t^{\sigma+\epsilon}}{t^{1/2}K^{1/2}} \left(1+\frac{t}{C^2 K^{3/2}}\right).
\end{equation}
We may now derive from these computations the following: 
\begin{lemma}
We have 
$$I^{**}(q,m,\tau,\nu)= I_1(q,m,\tau,\nu) + I_2(q,m,\tau,\nu),$$ 
where
\small
\begin{align*}
I_1(q,m,\tau,\nu) &= \frac{c_4}{(\nu+\frac{\tau}{2})^{1/2}K} \left(-\frac{(\nu+\frac{\tau}{2})q}{2\pi etm}\right)^{3/2-i(\nu+\frac{\tau}{2})} V\left(-\frac{(\nu+\frac{\tau}{2})q}{2\pi mt}\right) \left(\frac{tm}{(\nu+\frac{\tau}{2})aq}\right)^\sigma\\
& \times \check{U}\left(-\frac{(\nu+\frac{\tau}{2})q}{2\pi mt}\right) \int_{K^{-1+\delta'}}^1 V\left(\frac{\tau}{2K}- \frac{(\nu+\frac{\tau}{2})x}{Kma}\right)\mathrm{d}x,
\end{align*}
\normalsize
where $c_4$ is an absolute constant, and 
$$I_2(q,m,\tau,\nu):= I^{**}(q,m,\tau,\nu) - I_1(q,m,\tau,\nu) = B(C,\tau,\nu).$$
\end{lemma}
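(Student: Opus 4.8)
The plan is to obtain this decomposition by collecting the stationary‑phase computations carried out in the previous two subsections; no new ideas are needed, only careful bookkeeping. Recall that $I^{**}(q,m,\tau,\nu)$ is a double integral over $v\in[0,1]$ and $x\in\R$ of $V(v)$ against the two transforms $V^{\dagger}\!\big(\tfrac{tx}{aq},i(Kv-\tfrac{\tau}{2})+\tfrac12\big)$ and $U^{\dagger}\!\big(\tfrac{t(ma-x)}{aq},1-\sigma-i(Kv+\nu)\big)$. First I would use Lemma~\ref{newstat} to peel off the main term of the $U^{\dagger}$‑factor, with error $O(t^{-5/2})$ — the stationary point lies in the admissible range because the support of $\check U$ forces $t(x-ma)/(aq)\asymp\nu+Kv\asymp t$ — and then Munshi's asymptotic (Lemma~5 of \cite{MR3369905}) to peel off the main term of $V^{\dagger}$, with an error I call $E$. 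The product of the two main terms is the amplitude‑times‑phase integral in $v$ that I then attack by stationary phase.

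Before that I would dispose of the error terms $E$ and $O(t^{-5/2})$. The crucial observation is that the $V^{\dagger}$‑error does not depend on $\nu$, so the $\nu$‑integral against $M_t(\sigma+i\nu)W_l(\nu)t^{-i\nu}$ can be carried out first; by (\ref{condad}) the resulting phase has second derivative $\gg\nu^{-1}\asymp t^{-1}$, and the second‑derivative bound for oscillatory integrals gives exactly the estimate (\ref{extint}), a saving of size $t^{1/2}$. The remaining $(v,x)$‑integral is the $\min\{|Kv-\tfrac{\tau}{2}|^{-3/2},(tx/aq)^{-3/2}\}$ integral already evaluated in \cite{MR3369905}, contributing $K^{-3/2}\min\{1,10K/|\tau|\}t^{\epsilon}$; together with the trivial bound on the $t^{-5/2}$ piece this produces the $\tau$‑integrated bound (\ref{bctau}) for this part of $B$.

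For the main term I would split the $x$‑integral at $x=K^{\delta'-1}$ with $\delta'=2\epsilon/3$. On $x<K^{\delta'-1}$ the rapid decay of $\check U$ and $V$ together with (\ref{extint}) bounds the contribution by $t^{1/2+\sigma+\epsilon}K^{-3+\epsilon}(aq)^{-1}$, which is of the size allowed in $B$. On $K^{\delta'-1}\le x\le1$ I run stationary phase in $v$: the phase has a unique critical point $v_0=-\big((2\nu+\tau)x-\tau ma\big)/(2Kma)$, and using $\nu\gg t^{1-\epsilon}$ one verifies $f^{(j)}\asymp(tx/aq)(Kaq/tx)^{j}$ and $g^{(j)}(v)\ll t^{-1/2+\epsilon}(1+Kaq/tx)^{j}$. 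When $v_0\notin[.5,3]$ one has $|f'(v)|\gg K\min\{1,Kaq/tx\}$ on the support, and Lemma~8.1 of \cite{BKY} (with $K>t^{1/3+\epsilon}$) gives a negligible contribution; when $v_0\in[.5,3]$, Lemma~\ref{BKY} with $\delta=\tfrac1{100}$ and $A=10000\delta'^{-1}$ gives the expansion $\frac{e(f(v_0))}{\sqrt{2\pi f''(v_0)}}\sum_{n\leq 300A}p_n(v_0)+O((tx/aq)^{-A})$, the error negligible since $tx/aq\gg K^{\delta'}$.

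It then remains to split off the $n=0$ term. For the tail $n\ge1$ I would again perform the $\nu$‑integral first; by (\ref{pn})–(\ref{G}) each $p_n(v_0)$ is a rational expression in the $f^{(j)}(v_0),g^{(\ell)}(v_0)$, and formulas (\ref{f2v0}), (\ref{fjv0}), (\ref{gv0}) show its only $x$‑dependence sits in factors $x^{i}(x-ma)^{j+1/2}V^{(\ell)}\big(\tfrac{\tau}{2K}-\tfrac{(\nu+\tau/2)x}{Kma}\big)$; distinguishing $x\lessgtr Kaq/t$ and integrating by parts in $x$ \emph{inside} the $\nu$‑integral so as to keep $\int|\hat g_n'(\nu)|\,\mathrm d\nu\ll t^{\epsilon}\hat g_n$, the second‑derivative bound (again via (\ref{condad})) yields a tail error $E^{*}$ with $\int M_t(\sigma+i\nu)W_l(\nu)t^{-i\nu}E^{*}\,\mathrm d\nu\ll t^{1/2+\sigma+\epsilon}(aqK^3)^{-1}$. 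Plugging $v_0$ into $f(v_0),f''(v_0),g(v_0)$ then collapses the $n=0$ term to the explicit expression in the statement, so setting $B(C,\tau,\nu)=t^{-5/2}+E+E^{*}$ gives $I_2=I^{**}-I_1=B$ together with the bound (\ref{bctau}). The hard part is precisely this last step: one must track the $x$‑dependence of $p_n(v_0)$ through both regimes $x\lessgtr Kaq/t$ and integrate by parts in $x$ before applying the $\nu$‑second‑derivative bound, since a crude bound on $\hat g_n'(\nu)$ loses the saving; everything else is a routine iteration of the stationary‑phase lemmas of Section~2 together with (\ref{condad}).
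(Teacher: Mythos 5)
Your proposal follows the paper's own argument step for step: the same two applications of Lemma~\ref{newstat} and Munshi's asymptotic, the same observation that the $V^\dagger$-error is $\nu$-independent so that (\ref{condad}) and the second-derivative bound give (\ref{extint}), the same split at $x=K^{\delta'-1}$ with $\delta'=2\epsilon/3$, the same stationary-phase analysis in $v$ with the two regimes for $v_0$, and the same treatment of the $n\geq1$ tail via $x$-integration by parts inside the $\nu$-integral distinguishing $x\lessgtr Kaq/t$, arriving at $B=t^{-5/2}+E+E^*$ and the bound (\ref{bctau}). This is a correct reconstruction that matches the paper's proof; there is nothing to add.
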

Consequently from Lemma \ref{lemmastc} we arrive at:
\begin{lemma}
We have 
$$S_l(t,C)= \sum_{J\in \mathcal{J}}\left\{S_{1,J}(t,C)+S_{2,J}(t,C)\right\}+O(t^{-1000}),$$
where
\begin{align*}
S_{r,J}(t,C)&=\frac{iKt^{1/2}}{4\pi^2} \sum_\pm \sum_{n\ll Kt^\epsilon} \frac{\r_f(\mp n)}{\sqrt{n}}\\
& \sum_{C<q\leq 2C}\sum_{\substack{(m,q)=1 \\ 1\leq |m|\ll qt^\epsilon}} \frac{e\left(\pm \frac{n\bar{m}}{q}\right)}{aq}I_{r,J,\pm}(q,m,n),
\end{align*}
and
\small
\begin{align*}
I_{r,J,\pm}(q,m,n)= \int_{\R^2}  M_t(\sigma+i\nu)W_l(\nu)t^{-i\nu} \left(\frac{2\pi\sqrt{nt}}{q}\right)^{-i\tau} \gamma_\pm(i\tau+1) I_r(q,m,\tau,\nu)W_J(\tau)\mathrm{d}\tau\mathrm{d}\nu.
\end{align*}
\end{lemma}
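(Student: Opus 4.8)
The plan is to obtain this lemma as an immediate consequence of Lemma~\ref{lemmastc} combined with the decomposition $I^{**}(q,m,\tau,\nu)=I_1(q,m,\tau,\nu)+I_2(q,m,\tau,\nu)$ proved in the preceding lemma. Recall that Lemma~\ref{lemmastc} writes
$$S_l(t,C)=\frac{iKt^{1/2}}{4\pi^2}\sum_{\pm}\sum_{J\in\mathcal{J}}\sum_{n\ll Kt^\epsilon}\frac{\r_f(\mp n)}{\sqrt{n}}\sum_{C<q\leq 2C}\sum_{\substack{(m,q)=1\\ 1\leq|m|\ll qt^\epsilon}}\frac{e\!\left(\pm\frac{n\bar{m}}{q}\right)}{aq}\,I^*_\pm(q,m,n)+O(t^{-10000}),$$
where $I^*_\pm(q,m,n)$ is the double integral over $(\nu,\tau)\in\R^2$ whose integrand is $M_t(\sigma+i\nu)W_l(\nu)t^{-i\nu}\left(2\pi\sqrt{nt}/q\right)^{-i\tau}\gamma_\pm(i\tau+1)\,I^{**}(q,m,\tau,\nu)\,W_J(\tau)$. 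The factor $W_J$ makes $I^*_\pm$ depend also on $J$, and it is this $J$-indexed family that is summed over $\mathcal{J}$.

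First I would substitute $I^{**}=I_1+I_2$ into the integrand defining $I^*_\pm$. The $\tau$-integration runs over the compact support of $W_J$, and $M_t(\sigma+i\nu)W_l(\nu)$ is supported on $|\nu|\asymp t$ where $M_t$ is of polynomial size, so the $(\nu,\tau)$-integral is absolutely convergent; hence by linearity $I^*_\pm(q,m,n)=I_{1,J,\pm}(q,m,n)+I_{2,J,\pm}(q,m,n)$, with $I_{r,J,\pm}$ precisely the quantity in the statement. Distributing the finite outer sums over $\pm$, $J\in\mathcal{J}$, $n\ll Kt^\epsilon$, $C<q\leq 2C$ and $1\leq|m|\ll qt^\epsilon$ through the two terms yields $S_l(t,C)=\sum_{J\in\mathcal{J}}\{S_{1,J}(t,C)+S_{2,J}(t,C)\}$ up to the error inherited from Lemma~\ref{lemmastc}. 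Since $O(t^{-10000})$ is in particular $O(t^{-1000})$, this is exactly the assertion.

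I expect no genuine obstacle here: the lemma is a bookkeeping device recording the separation of $S_l(t,C)$ into a main part assembled from the explicit $I_1$ (to be analysed in the following subsections) and an error part assembled from $I_2=B(C,\tau,\nu)$ (whose contribution will be controlled by~(\ref{bctau})). The only points deserving a line of justification are that the implicit $J$-dependence of $I^*_\pm$ is propagated correctly and that interchanging the finite sums with the $(\nu,\tau)$-integral is legitimate, both immediate from absolute convergence. All the analytic content, namely the stationary-phase evaluation producing $I_1$ and the estimates behind $I_2$, has already been established above, so the proof here consists only in collecting those results.
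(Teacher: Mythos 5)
Your proof is correct and matches the paper's own (implicit) argument: the lemma follows immediately from Lemma~\ref{lemmastc} by substituting $I^{**}=I_1+I_2$ into $I^*_\pm$, using linearity of the absolutely convergent double integral, and distributing the finite sums, with the error $O(t^{-10000})$ absorbed into $O(t^{-1000})$.
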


\normalsize
\subsection{Application of Cauchy and Poisson I}
We will estimate here 
$$\tilde{S}_2(t,C) := \sum_{J \in \mathcal{J}} S_{2,J}(t,C).$$
Taking a dyadic subdivision and using the bound $|\gamma_\pm(i\tau +1)|\ll 1$, we get
\begin{align*}
\tilde{S}_2(t,C)&\ll K t^{1/2} \int_{\frac{-(tK)^{1/2}t^\epsilon}{C}}^{\frac{(tK)^{1/2}t^\epsilon}{C}} \sum_\pm\sum_{\substack{1\leq L \ll Kt^\epsilon \\ L \hbox{ dyadic}}} \sum_{n\in \Z} \frac{|\r_f(\mp n)|}{\sqrt{n}} U\left(\frac{n}{L}\right)\\
& \times \left|\sum_{C<q\leq 2C} \sum_{\substack{(m,q)=1 \\ 1\leq |m| \ll qt^\epsilon}} \frac{e\left(\pm\frac{n\bar{m}}{q}\right)}{aq^{1-i\tau}} B(C,\tau)\right|\mathrm{d}\tau,
\end{align*}
where
\begin{align*}
B(C,\tau) &:= \int_\R M_t(\sigma+i\nu)W_l(\nu) t^{-i\nu}B(C,\tau,\nu) \mathrm{d}\nu\\
&\ll t^{\sigma+\epsilon} \left(\frac{1}{t^{1/2}K^{3/2}} \min\left\{1,\frac{10K}{|\tau|}\right\}+\frac{1}{CK^{5/2}}\right)
\end{align*}
By Cauchy and Rankin-Selberg, we get
\begin{align*}
\tilde{S}_2(t,C)&\ll Kt^{1/2+\epsilon}\int_{-\frac{(tK)^{1/2}t^\epsilon}{C}}^{\frac{(tK)^{1/2}t^\epsilon}{C}} \sum_\pm \sum_{\substack{1\leq L\ll Kt^\epsilon\\ L \hbox{ dyadic}}} L^{1/2} \left[S_{2,\pm}(t,C,L,\tau)\right]^{1/2}\mathrm{d}\tau,
\end{align*}
where
\begin{align*}
S_{2,\pm}(t,C,L,\tau)&= \sum_{n\in \Z} \frac{1}{n} U\left(\frac{n}{L}\right) \left|\sum_{C<q\leq 2C} \sum_{\substack{(m,q)=1 \\ 1\leq |m| \ll qt^\epsilon}} \frac{e\left(\pm \frac{n\bar{m}}{q}\right)}{aq^{1-i\tau}} B(C,\tau)\right|^2.
\end{align*}
Opening the absolute square and interchanging the order of summation, we obtain
$$S_{2,\pm}(t,C,L,\tau) = \sum_{C<q,q'\leq 2C} \sum_{\substack{(m,q)=1 \\ 1\leq |m|\ll qt^\epsilon}} \sum_{\substack{(m',q')=1\\ 1\leq |m'|\ll q't^\epsilon}} \frac{|B(C,\tau)|^2}{aa'q^{1-i\tau}q'^{1+i\tau}} T,$$
where
$$T:= \sum_{n\in \Z} \frac{1}{n} U\left(\frac{n}{L}\right) e\left(\pm\frac{n\bar{m}}{q} \mp \frac{n\bar{m'}}{q'}\right).$$
Splitting in congruence classes mod $qq'$ and applying Poisson summation, we get
\begin{align*}
T
&= \sum_{n\in \Z} \delta_{\pm q'\bar{m}\mp q\bar{m'}+n \equiv 0 \mods{qq'}} \int_\R\frac{1}{y}U(y)e\left(-\frac{Lny}{qq'}\right).
\end{align*}
We may now truncate the $n$-sum to $n\ll C^2t^\epsilon/L$, for otherwise the oscillatory integral is negligibly small. We may therefore estimate
\begin{align*}
S_{2,\pm}(t,C,L,\tau)& \ll  \sum_{C<q,q'\leq 2C}\sum_{\substack{(m,q)=1\\ 1\leq |m|\ll qt^\epsilon}} \sum_{\substack{(m',q')=1\\ 1\leq |m'|\ll q't^\epsilon}} \sum_{\substack{n\ll \frac{C^2t^\epsilon}{L}\\ n\equiv \pm q\bar{m'} \mp q'\bar{m} \mods{qq'}}} \frac{K|B(C,\tau)|^2}{tC^2}\\
& \ll \frac{t^\epsilon C^3K |B(C,\tau)|^2}{tL}.
\end{align*}
Thus, by (\ref{bctau}), we have
\begin{align*}
\tilde{S}_2(t,C) & \ll  \sum_{\substack{1\leq L \ll Kt^\epsilon\\ L \hbox{ dyadic}}} C^{3/2}K^{3/2} t^\epsilon \int_{-\frac{(tK)^{1/2}t^\epsilon}{C}}^{\frac{(tK)^{1/2}t^\epsilon}{C}}|B(C,\tau)|\mathrm{d}\tau\\
&\ll t^{\sigma+\epsilon} \left(\frac{C^{3/2}K}{t^{1/2}}+\frac{t^{1/2}}{C^{1/2}K^{1/2}}\right).
\end{align*}
The contribution of $S_2(t,C)$ to $S_l^+(t)$ is therefore bounded by
$$t^\epsilon \left(\frac{t^{5/4}}{K^{3/4}}+ \frac{t^{3/2}}{K^{3/2}}\right).$$
Upon taking $K=t^{1/2},$ we note that this is bounded by $t^{1-1/8+\epsilon}$.

\subsection{Application of Poisson and Cauchy II}
The analysis for $S_{1,J}$ is more delicate as we need to exploit some cancelation coming from both the $\nu$ and $\tau$ integrals. The idea is to use Cauchy and Rankin-Selberg as before, but keeping the integrals over $\tau$ and $\nu$ inside. We may bound
\begin{align*}
S_{1,J}(t,C) \ll Kt^{1/2} \sum_\pm \sum_{\substack{1\leq L \ll Kt^\epsilon \\ L \hbox{ dyadic }}} L^{1/2} \left[S_{1,J,\pm}(t,C,L)\right]^{1/2},
\end{align*}
where  
\small
\begin{align*}
S_{1,J,\pm}(t,C,L) =& \sum_{n\in \Z} \frac{1}{n} U\left(\frac{n}{L}\right) \left| \int_\R \int_\R  M_t(\sigma+i\nu) W_l(\nu) t^{-i\nu} (2\pi \sqrt{nt})^{-i\tau} \right.\\
& \left. \times \gamma_\pm(i\tau+1) \sum_{C<q\leq 2C} \sum_{\substack{(m,q)=1 \\ 1\leq |m| \ll qt^\epsilon}} \frac{e\left(\pm \frac{n\bar{m}}{q}\right)}{aq^{1-i\tau}} I_1(q,m,\tau,\nu) W_J(\tau) \mathrm{d}\tau\mathrm{d}\nu \right|^2.
\end{align*}
\normalsize
Opening the absolute square and interchanging the order of summation, we find that $S_{1,J,\pm}(t,C,L)$ is given by
\small
\begin{align*}
& \int_{\R^4} M_t(\sigma +i\nu) \overline{M_t(\sigma+i\nu')} W_l(\nu)W_l(\nu') t^{i(\frac{\tau'-\tau}{2}+\nu'-\nu)} \gamma_{\pm}(1+i\tau) \overline{\gamma_\pm(1+i\tau')}W_J(\tau)W_J(\tau')\\
&\sum_{C<q,q'\leq 2C} \sum_{\substack{(m,q)=1\\ 1\leq |m|\ll qt^\epsilon}} \sum_{\substack{(m',q')=1\\ 1\leq |m'|\ll q't^\epsilon}} \frac{I_1(q,m,\tau,\nu)\overline{I_1(q',m',\tau',\nu')}}{aa'(2\pi)^{i(\tau-\tau')}q^{1-i\tau}q'^{1+i\tau'}} \mathcal{T}' \mathrm{d}\tau\mathrm{d}\tau' \mathrm{d}\nu \mathrm{d}\nu',
\end{align*}
\normalsize
where
$$\mathcal{T}' = \sum_{n\in \Z} \frac{1}{n^{1+i\frac{\tau-\tau'}{2}}} U\left(\frac{n}{L}\right) e\left(\pm \frac{n\bar{m}}{q} \mp \frac{n\bar{m'}}{q'}\right).$$
Applying Poisson summation, similarly to the previous section, we obtain
$$\mathcal{T}' = \frac{L^{i\frac{\tau'-\tau}{2}}}{qq'} \sum_{n\in \Z} \delta\pm(n,m,m',q,q') U^\dagger\left(\frac{nL}{qq'}, -i\frac{\tau-\tau'}{2}\right),$$
where
$$\delta_\pm(n,m,m',q,q')= qq' \delta_{\pm q'\bar{m} \mp q\bar{m'} + n \equiv 0 \mods{qq'}}.$$
Since $|\tau-\tau'|\ll (tK)^{1/2}t^\epsilon/C$ and $q,q'\asymp C$, we have by Lemma \ref{Munshilemma} that if $|n|\gg C(tK)^{1/2} t^\epsilon/L,$ then the contribution is negligibly small.

\begin{lemma}
The sum $S_{1,J,\pm}(t,C,L)$ is dominated by the sum
$$\frac{K}{tC^2} \sum_{C<q,q' \leq 2C} \sum_{\substack{(m,q)=1 \\ 1\leq |m| \ll qt^\epsilon}} \sum_{\substack{(m',q')=1\\ 1\leq |m'| \ll q't^\epsilon}} \sum_{\substack{|n|\ll C(tK)^{1/2}t^\epsilon/L\\ n\equiv \pm q\bar{m'}\mp q'\bar{m} \mods{qq'}}} |\mathcal{K}_\pm| + O(t^{-1000}),$$
where
\begin{align*}
\mathcal{K}_\pm&= \int_{\R^4} M_t(\sigma+i\nu) \overline{M_t(\sigma+i\nu')} W_l(\nu)W_l(\nu') t^{i(\nu'-\nu)} \frac{(4\pi^2 tL)^{-i\frac{\tau-\tau'}{2}}}{q^{-i\tau}q'^{i\tau'}} W_J(\tau)W_J(\tau')\\
& \gamma_\pm(i\tau+1) \overline{\gamma_\pm(i\tau'+1)} I_1(q,m,\tau,\nu) \overline{I_1(q',m',\tau',\nu')} U^\dagger\left(\frac{nL}{qq'},i\frac{\tau'-\tau}{2}\right) \mathrm{d}\tau\mathrm{d}\tau'\mathrm{d}\nu\mathrm{d}\nu'.
\end{align*}
\end{lemma}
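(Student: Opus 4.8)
The plan is to assemble the ingredients just obtained: substitute the Poisson-evaluated expression for $\mathcal{T}'$ into the opened square for $S_{1,J,\pm}(t,C,L)$, pull the (now Poisson-dual) $n$-sum outermost, truncate it, extract a prefactor, and recognise the leftover $\nu,\nu',\tau,\tau'$-integral as $\mathcal{K}_\pm$. First I would insert
$$\mathcal{T}' = \frac{L^{i(\tau'-\tau)/2}}{qq'}\sum_{n\in\Z}\delta_\pm(n,m,m',q,q')\,U^\dagger\!\left(\tfrac{nL}{qq'},-\tfrac{i(\tau-\tau')}{2}\right)$$
into the displayed formula for $S_{1,J,\pm}(t,C,L)$ and interchange the resulting $n$-sum with the finite sums over $q,q',m,m'$ and the compactly supported integrals over $\nu,\nu',\tau,\tau'$; this is legitimate since $\sum_{n}U^\dagger(nL/qq',-i(\tau-\tau')/2)$ converges absolutely by Lemma~\ref{Munshilemma}. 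The factor $qq'$ hidden in $\delta_\pm(n,m,m',q,q')=qq'\,\delta_{\pm q'\bar m\mp q\bar m'+n\equiv0\,(qq')}$ cancels the $1/(qq')$ in front of $\mathcal{T}'$ and pins $n$ to the residue class $n\equiv\pm q\bar m'\mp q'\bar m\mods{qq'}$.

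Next, as already noted just before the lemma, Lemma~\ref{Munshilemma} applied with a large exponent, together with $1+|\tau-\tau'|\ll(tK)^{1/2}t^\epsilon/C$ on $\mathrm{Supp}(W_J)^2$ and $qq'\asymp C^2$, shows that the terms with $|n|\gg C(tK)^{1/2}t^\epsilon/L$ contribute only $O(t^{-1000})$: in that range one gains a negative power of $t$ per unit of the exponent, which outweighs the $\ll C^4t^{O(\epsilon)}$ quadruples $(q,q',m,m')$, the tail in $n$, and the polynomial-in-$t$ sizes of $M_t$, $I_1$, $\gamma_\pm$ and of the $\nu,\nu',\tau,\tau'$-integration box. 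This leaves precisely the truncated $n$-sum in the statement. On the surviving range I would then extract the prefactor: the coefficient $\frac{1}{aa'\,q^{1-i\tau}q'^{1+i\tau'}}$ factors as $\frac{1}{aa'qq'}\cdot q^{i\tau}q'^{-i\tau'}=\frac{1}{aa'qq'}\cdot\frac{1}{q^{-i\tau}q'^{i\tau'}}$, and since $a,a'\in(Q,q+Q]$ with $Q=(t/K)^{1/2}$ and $q\le2C\le2Q$, one has $\frac{1}{aa'qq'}\asymp\frac{1}{Q^2C^2}=\frac{K}{tC^2}$, which I pull out; the unimodular factor $\frac{1}{q^{-i\tau}q'^{i\tau'}}$ stays inside. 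Combining the three unimodular monomials $(2\pi)^{-i(\tau-\tau')}$ and $t^{i(\tau'-\tau)/2}$ (both from $(2\pi\sqrt{nt})^{-i(\tau-\tau')}$) with $L^{i(\tau'-\tau)/2}$ (from the Poisson formula) into $(4\pi^2tL)^{-i(\tau-\tau')/2}$, the $\nu,\nu',\tau,\tau'$-integral left for each fixed $(n,m,m',q,q')$ is exactly $\mathcal{K}_\pm$ as displayed; moving absolute values inside the sums over $n,m,m',q,q'$ then gives the asserted bound.

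The only step requiring genuine care is the truncation of the Poisson-dual sum: one must confirm that the power saving from Lemma~\ref{Munshilemma} genuinely dominates the product of all the polynomial-in-$t$ factors above, which forces its exponent $j$ to be taken comfortably larger than $1000$ (and slightly larger than the $t^\epsilon$-cushion built into the displayed truncation range), after which the $\epsilon$'s are absorbed as usual. Everything else is routine bookkeeping of residue classes, of the signs $\pm$, and of the unimodular monomials in $t$, $L$ and $2\pi$.
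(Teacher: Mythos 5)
Your proposal is correct and matches the paper's (implicit) argument: the paper presents this lemma without a separate proof, relying on the immediately preceding Poisson evaluation of $\mathcal{T}'$, the truncation via Lemma \ref{Munshilemma} using $|\tau-\tau'|\ll (tK)^{1/2}t^\epsilon/C$ and $qq'\asymp C^2$, and the bookkeeping $\frac{1}{aa'qq'}\asymp K/(tC^2)$ with the unimodular monomials collected into $(4\pi^2 tL)^{-i(\tau-\tau')/2}/(q^{-i\tau}q'^{i\tau'})$. You carry out exactly these steps, so the approach is essentially identical to the paper's.
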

We are thus only left with understanding $\mathcal{K}_\pm$. Writing out explicitly $I_1(q,m,\tau,\nu)$, we obtain
\begin{align*}
\mathcal{K}_\pm &= \frac{|c_4|^2}{K^2}  \int_{\R^4} W_J(q,m,\tau,\nu)\overline{W_J(q',m',\tau',\nu')} e(f_t(\sigma+i\nu)-f_t(\sigma+i\nu'))\\
&\times t^{i(\nu'-\nu)}U^\dagger\left(\frac{nL}{qq'},i\frac{\tau'-\tau}{2}\right) \left(-\frac{(\nu+\frac{\tau}{2})q}{2\pi etm}\right)^{-i(\nu+\frac{\tau}{2})} \left(-\frac{(\nu'+\frac{\tau'}{2})q'}{2\pi e tm'}\right)^{i(\nu'+\frac{\tau'}{2})}\\
&\times \gamma_\pm(1+i\tau)\overline{\gamma_\pm(1+i\tau')} \frac{(4\pi^2tL)^{i\frac{\tau'-\tau}{2}}}{q^{-i\tau}q'^{i\tau'}} \mathrm{d}\tau\mathrm{d}\tau'\mathrm{d}\nu\mathrm{d}\nu',
\end{align*}
where
\begin{align*}
W_J(q,m,\tau,\nu)&= g_t(\sigma +i\nu) \frac{W_l(\nu) W_J(\tau)}{(\nu+\frac{\tau}{2})^{1/2}}  \left(-\frac{(\nu+\frac{\tau}{2})q}{2\pi etm}\right)^{3/2} V\left(-\frac{(\nu+\frac{\tau}{2})q}{2\pi tm}\right) \\
&\times\left(\frac{tm}{(\nu+\frac{\tau}{2})q}\right)^\sigma \check{U}\left(-\frac{(\nu+\frac{\tau}{2})q}{2\pi mt}\right) \int_{K^{\sigma'-1}}^1 V\left(\frac{\tau}{2K}-\frac{(\nu+\frac{\tau}{2})x}{Kma}\right)\mathrm{d}x.
\end{align*}
We note in passing the following estimates 
\begin{equation}\label{w1}
\frac{\mathrm{d}}{\mathrm{d}\tau} W_J(q,m,\tau,\nu) \ll \frac{|\nu|^{\sigma-1}}{|\tau|},
\end{equation}
and
\begin{equation}\label{w2}
\frac{\mathrm{d}}{\mathrm{d}\nu}W_J(q,m,\tau,\nu) \ll |\nu|^{\sigma-2}.
\end{equation}
We first analyse the case $n=0$; it will be sufficient to consider 
\begin{align*}
&\int_{\R^2} W_J(q,m,\tau,\nu)\overline{W_J(q',m',\tau',\nu')} e(f_t(\sigma+i\nu)-f_t(\sigma+i\nu')) t^{i(\nu'-\nu)}\\
&\times \left(-\frac{(\nu+\frac{\tau}{2})q}{2\pi etm}\right)^{-i(\nu+\frac{\tau}{2})} \left(-\frac{(\nu'+\frac{\tau'}{2})q'}{2\pi etm'}\right)^{i(\nu'+\frac{\tau'}{2})}\mathrm{d}\nu\mathrm{d}\nu'\\
&= \int_{\R^2} W_J(q,m,\tau,\nu)\overline{W_J(q',m',\tau',\nu')} e(f(\nu,\nu'))\mathrm{d}\nu\mathrm{d}\nu',
\end{align*}
where we temporarily define
\begin{align*}
f(\nu,\nu')&= f_t(\sigma+i\nu)-f_t(\sigma+i\nu') +\frac{\nu'-\nu}{2\pi} \log t \\
&-\frac{\nu+\frac{\tau}{2}}{2\pi} \log\left(-\frac{(\nu+\frac{\tau}{2})q}{2\pi etm}\right) +\frac{\nu'+\frac{\tau'}{2}}{2\pi} \log\left(-\frac{(\nu'+\frac{\tau'}{2})q'}{2\pi e tm'}\right).
\end{align*}
We compute
$$\frac{\mathrm{d}f}{\mathrm{d}\nu} = f_t'(\sigma+i\nu) - \frac{\log t}{2\pi} - \frac{1}{2\pi} \log\left(-\frac{(\nu+\frac{\tau}{2})q}{2\pi etm}\right) - \frac{1}{2\pi},$$
$$\frac{\mathrm{d}f}{\mathrm{d}\nu'} = -f_t'(\sigma+i\nu') + \frac{\log t}{2\pi} + \frac{1}{2\pi} \log\left(-\frac{(\nu'+\frac{\tau'}{2})q'}{2\pi etm'}\right) +\frac{1}{2\pi}.$$
and thus
$$\frac{\mathrm{d}^2f}{\mathrm{d}\nu\mathrm{d}\nu'} = 0,$$
while by (\ref{condad}), we have
$$\frac{\mathrm{d}^2f}{\mathrm{d}\nu^2} =f_t''(\sigma+i\nu) - \frac{1}{2\pi (\nu+\frac{\tau}{2})} \gg |\nu|^{-1},$$
and
$$\frac{\mathrm{d}^2f}{\mathrm{d}\nu'^2} = -f_t''(\sigma+i\nu') + \frac{1}{2\pi (\nu'+\frac{\tau'}{2})} \gg |\nu'|^{-1}.$$
We also note that by (\ref{w2}), we have
$$\hbox{Var}(W_J(q,m,\tau,\nu)\overline{W_J(q',m',\tau',\nu')})\ll t^{2\sigma-2+\epsilon}.$$
We now have by the second derivative bound for oscillatory integrals in multivariables (see \cite{Srinstat}) that 
\begin{equation}\label{2var}
\int_{\R^2} W_J(q,m,\tau,\nu)\overline{W_J(q',m',\tau',\nu')} e(f(\nu,\nu'))\mathrm{d}\nu\mathrm{d}\nu' \ll t^{2\sigma-1+\epsilon}.
\end{equation}
By integration by parts, if $|\tau-\tau'| \gg t^\epsilon$, then $U^\dagger\left(0,i\frac{\tau-\tau'}{2}\right)$ is negligibly small. The contribution from $n=0$ to $\mathcal{K_\pm}$ is thus bounded by
$$K^{-2} \int\int_{\substack{|\tau-\tau'|\ll t^\epsilon\\ |\tau|,|\tau'|\asymp J}} t^{2\sigma-1+\epsilon} \ll  \frac{t^{2\sigma+\epsilon}}{Ct^{1/2}K^{3/2}}.$$ 
We now treat the case $n\not = 0$. We have by Lemma 5 of \cite{MR3369905} that
\begin{align*}
U^\dagger\left(\frac{nL}{qq'}, -i\frac{\tau-\tau'}{2}\right) & = \frac{c_5}{(\tau'-\tau)^{1/2}} U\left(\frac{(\tau'-\tau)qq'}{4\pi nL}\right) \left(\frac{(\tau'-\tau)qq'}{4\pi enL}\right)^{-i(\tau-\tau')/2}\\
& +O\left(\min\left\{\frac{1}{|\tau-\tau'|^{3/2}}, \frac{C^3}{(|n|L)^{3/2}}\right\}\right),
\end{align*}
for some constant $c_5$ (which depends on the sign of $n$). In order to bound the error term, we use (\ref{2var}) to see that the contribution is bounded by
$$\frac{t^{2\sigma-1+\epsilon}}{K^2} \int_{[J,2J]^2} \min\left\{\frac{1}{|\tau-\tau'|^{3/2}}, \frac{C^3}{(|n|L)^{3/2}}\right\}.$$
We first estimate
\begin{align*}
\frac{t^{2\sigma-1+\epsilon}}{K^2} \int_{\substack{[J,2J]^2 \\ |\tau-\tau'|\leq  |nL|/C^2}} \frac{C^3}{(|n|L)^{3/2}} \mathrm{d}\tau\mathrm{d}\tau' \ll \frac{t^{2\sigma-1+\epsilon}CJ}{K^2(|n|L)^{1/2}}\ll \frac{t^{2\sigma-1/2+\epsilon}}{K^{3/2}(|n|L)^{1/2}},
\end{align*}
and then
\begin{align*}
 \frac{t^{2\sigma-1+\epsilon}}{K^2} \int_{\substack{[J,2J]^2\\ |\tau-\tau'|> |nL|/C^2}} \frac{1}{|\tau-\tau'|^{3/2}} \mathrm{d}\tau\mathrm{d}\tau' &\ll \frac{Ct^{2\sigma-1+\epsilon}}{K^2(|nL|)^{1/2}} \int_{[J,2J]^2} \frac{1}{|\tau-\tau'|^{1-\epsilon}}\mathrm{d}\tau\mathrm{d}\tau'\\
& \ll \frac{CJt^{2\sigma-1+\epsilon}}{K^2(|nL|)^{1/2}} \ll \frac{t^{2\sigma-1/2+\epsilon}}{K^{3/2}(|nL|)^{1/2}}.
\end{align*}
We thus set 
$$B^*(C,0)= \frac{t^{2\sigma+\epsilon}}{K^{3/2}Ct^{1/2}},$$
and for $n\not= 0$, 
$$B^*(C,n)= \frac{t^{2\sigma+\epsilon}}{K^{3/2}t^{1/2}(|n|L)^{1/2}}.$$

We now consider the main term. As noted in Section 4.1, the contribution from $\gamma_+$ is simpler, and thus we will only focus on $\gamma_-$. We first note that by Fourier inversion, we have
$$\left(\frac{4\pi nL}{(\tau'-\tau)qq'}\right)^{1/2}  U\left(\frac{(\tau'-\tau)qq'}{4\pi nL}\right) = \int_\R U^\dagger\left(r,\frac{1}{2}\right) e\left(r\frac{(\tau'-\tau)qq'}{4\pi nL}\right)\mathrm{d}r.$$
Pulling out the oscillation from the $\gamma_-$ factors, we conclude that for some constant $c_6$ (depending on the sign of $n$), we have
\begin{align*}
\mathcal{K}_- =& \frac{c_6}{K^2} \left(\frac{qq'}{|n|L}\right)^{1/2} \int_\R U^\dagger\left(r,\frac{1}{2}\right) \int_{\R^4} g(\tau,\tau',\nu,\nu') e(f(\tau,\tau',\nu,\nu',r)) \mathrm{d}\tau\mathrm{d}\tau' \mathrm{d}\nu\mathrm{d}\nu'\mathrm{d}r\\
&+ O(B^*(C,n)),
\end{align*}
where
\small
\begin{align*}
f(\tau,\tau',\nu,\nu',r) =& f_t(\sigma+i\nu) - f_t(\sigma+i\nu') + \frac{\nu'-\nu}{2\pi}\log t +\frac{\tau}{2\pi}\log\left(\frac{|\tau|}{e}\right) - \frac{\tau'}{2\pi} \log\left(\frac{|\tau'|}{e}\right)\\
& + \frac{\tau'-\tau}{4\pi} \log\left(\frac{(\tau'-\tau)4\pi t qq'}{en}\right) - \frac{\nu+\frac{\tau}{2}}{2\pi} \log\left(-\frac{(\nu+\frac{\tau}{2})q}{2\pi etm}\right) + \frac{\tau}{2\pi} \log q\\
&- \frac{\tau'}{2\pi} \log q' + \frac{\nu'+\frac{\tau'}{2}}{2\pi} \log\left(-\frac{(\nu'+\frac{\tau'}{2})q'}{2\pi etm'}\right) + \frac{r(\tau'-\tau)qq'}{4\pi nL},
\end{align*}
\normalsize
and
$$g(\tau,\tau',\nu,\nu') = W_J(q,m,\tau,\nu)\overline{W_J(q',m',\tau',\nu')} \Phi_-(\tau) \overline{\Phi_-(\tau')}.$$
We will use the second derivative bound for multivariable oscillatory integrals as can be found in \cite{Srinstat} and hence compute
\begin{align*}
&2\pi \frac{\mathrm{d}^2f}{\mathrm{d}\tau^2} = \frac{1}{\tau}- \frac{1}{4(\nu+\frac{\tau}{2})} + \frac{1}{2 (\tau'-\tau)}, \, 2\pi \frac{\mathrm{d}^2f}{\mathrm{d}\tau\mathrm{d}\tau'} = \frac{1}{2(\tau-\tau')}, \,2\pi \frac{\mathrm{d}^2f}{\mathrm{d}\tau\mathrm{d}\nu}  = -\frac{1}{2(\nu+\frac{\tau}{2})},\\
& 2\pi \frac{\mathrm{d}^2f}{\mathrm{d}\tau'^2} = -\frac{1}{\tau'} + \frac{1}{4(\nu'+\frac{\tau'}{2})}+\frac{1}{2(\tau'-\tau)}, \, 2\pi \frac{\mathrm{d}^2f}{\mathrm{d}\tau'\mathrm{d}\nu'} = \frac{1}{2(\nu'+\frac{\tau'}{2})},\\
&\frac{\mathrm{d}^2f}{\mathrm{d}\nu^2} = f_t''(\sigma+i\nu) - \frac{1}{2\pi (\nu+\frac{\tau}{2})}, \, \frac{\mathrm{d}^2f}{\mathrm{d}\nu'^2} = \frac{1}{2\pi (\nu'+\frac{\tau'}{2})} - f_t''(\sigma+i\nu'),
\end{align*}
while
$$\frac{\mathrm{d}^2f}{\mathrm{d}\tau\mathrm{d}\nu'}=\frac{\mathrm{d}^2f}{\mathrm{d}\tau'\mathrm{d}\nu}= \frac{\mathrm{d}^2f}{\mathrm{d}\nu\mathrm{d}\nu'} = 0.$$
Computing the minors of the Hessian matrix , we see from \cite[Lemma 5]{Srinstat} that for $D$ a box in $R^4$,
\begin{equation}\label{geq1} 
\int_{D} e(f(\tau,\tau',\nu,\nu')) \mathrm{d}\tau\mathrm{d}\tau'\mathrm{d}\nu\mathrm{d}\nu' \ll t^\epsilon Jt,
\end{equation}
where we used $r_1=r_2=J^{-1/2}$ and $r_3=r_4= t^{-1/2}$ as can be seen from our calculations of the second derivatives and that $\tau,\tau' \in [J,\frac{4}{3} J]$. Using (\ref{w1}) and (\ref{w2}), we compute the total variation, using that $t^{1-\epsilon} \ll|\nu|\ll t^{1+\epsilon}$:
\begin{align}\label{totvar}
\nonumber \hbox{Var}(g(\tau,\tau',\nu,\nu')) & := \int_{\R^4}  \left|\frac{\mathrm{d}g}{\mathrm{d}\tau\mathrm{d}\tau'\mathrm{d}\nu\mathrm{d}\nu'}\right| \mathrm{d}\tau\mathrm{d}\tau'\mathrm{d}\nu\mathrm{d}\nu'\\
\nonumber &\ll \int_{\R^4} \frac{|\nu|^{\sigma-2}|\nu'|^{\sigma-2}}{|\tau||\tau'|} Jt^{1+\epsilon} \mathrm{d}\tau\mathrm{d}\tau'\mathrm{d}\nu\mathrm{d}\nu'\\
&\ll t^{2\sigma-2+\epsilon}.
\end{align} 
 By integration by parts, we note that by (\ref{geq1}) and (\ref{totvar}), we have
\begin{align*}
&\int_{\R^4} g(\tau,\tau',\nu,\nu') e(f(\tau,\tau',\nu,\nu',r)) \mathrm{d}\tau\mathrm{d}\tau' \mathrm{d}\nu\mathrm{d}\nu'\\
&\ll \int_{\R^4} \left|\frac{\mathrm{d}g}{\mathrm{d}\tau\mathrm{d}\tau'\mathrm{d}\nu\mathrm{d}\nu'}\right| Jt^{1+\epsilon}\mathrm{d}\tau\mathrm{d}\tau'\mathrm{d}\nu\mathrm{d}\nu'\\
&\ll J t^{2\sigma-1+\epsilon}.
\end{align*}
Then, integrating trivially over $r$ and using the rapid decay of Fourier transforms, we arrive at the following result:

\begin{lemma}
We have 
$$\mathcal{K}_- \ll  B^*(C,n).$$
\end{lemma}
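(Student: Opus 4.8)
The plan is to assemble the estimates established in the preceding paragraphs. Recall that we have decomposed $\mathcal{K}_-$ into a main term --- a five-fold oscillatory integral over $(\tau,\tau',\nu,\nu',r)$ with amplitude $g(\tau,\tau',\nu,\nu')$, phase $f(\tau,\tau',\nu,\nu',r)$, weight $U^\dagger(r,1/2)$, and prefactor $\frac{c_6}{K^2}(qq'/|n|L)^{1/2}$ with $c_6 \ll 1$ --- plus error terms coming from the asymptotic expansions of $U^\dagger$ and $V^\dagger$, all of which have already been bounded by $O(B^*(C,n))$; the $n=0$ contribution has likewise already been shown to be $O(B^*(C,0))$. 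So it remains only to bound the main term for $n\neq 0$.

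First I would dispose of the $r$-integral trivially: since $U^\dagger(r,1/2)$ is, up to normalisation, the Fourier transform of the smooth compactly supported function $U$, it decays rapidly in $r$, so $\int_\R |U^\dagger(r,1/2)|\,\mathrm{d}r \ll 1$ and no factor is lost. For the inner four-fold integral I would integrate by parts in $\tau,\tau',\nu,\nu'$ against the amplitude $g$, combining the box bound $(\ref{geq1})$, namely $\int_D e(f) \ll t^\epsilon Jt$ --- which is exactly \cite[Lemma 5]{Srinstat} applied with $r_1=r_2=J^{-1/2}$ and $r_3=r_4=t^{-1/2}$, the scales forced by the Hessian computed above, using $(\ref{condad})$ to ensure $\frac{\mathrm{d}^2f}{\mathrm{d}\nu^2},\frac{\mathrm{d}^2f}{\mathrm{d}\nu'^2}\gg|\nu|^{-1}$ --- with the total variation bound $(\ref{totvar})$, $\mathrm{Var}(g)\ll t^{2\sigma-2+\epsilon}$. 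This yields $\int_{\R^4} g\, e(f)\,\mathrm{d}\tau\mathrm{d}\tau'\mathrm{d}\nu\mathrm{d}\nu' \ll Jt^{2\sigma-1+\epsilon}$, a step in fact already carried out just before the statement. Multiplying by the prefactor, using $q,q'\asymp C$ so that $(qq'/|n|L)^{1/2}\asymp C(|n|L)^{-1/2}$, and recalling $J \ll (tK)^{1/2}t^\epsilon/C$, we obtain
$$\mathcal{K}_- \ll \frac{1}{K^2}\cdot\frac{C}{(|n|L)^{1/2}}\cdot \frac{(tK)^{1/2}t^\epsilon}{C}\cdot t^{2\sigma-1+\epsilon} + B^*(C,n) \ll \frac{t^{2\sigma-1/2+\epsilon}}{K^{3/2}(|n|L)^{1/2}} = B^*(C,n),$$
which is the claim.

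The only genuine content, beyond bookkeeping, is the integration-by-parts bound for the four-dimensional integral, and hence the applicability of the multivariable second derivative bound to our phase $f$. The hard point is precisely that the $\nu,\nu'$ directions must be non-degenerate: this is where $(\ref{condad})$ --- the lower bound $f_t''(\sigma+i\nu) - \frac{1}{2\pi\nu}\gg \nu^{-1}$ --- is indispensable, since without it the relevant Hessian minors collapse and one cannot save the crucial factor of $(tK)^{1/2}$. Everything else (the trivial $r$-integration via the Fourier decay of $U^\dagger$, and the collection of the previously established error terms) is routine.
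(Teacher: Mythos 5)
Your proposal is correct and follows essentially the same route as the paper: the error contributions were already absorbed into $B^*(C,n)$, the four-fold $(\tau,\tau',\nu,\nu')$-integral is bounded by $Jt^{2\sigma-1+\epsilon}$ via integration by parts combined with the box bound (\ref{geq1}) and the variation bound (\ref{totvar}), the $r$-integral is trivial by rapid decay of $U^\dagger(r,1/2)$, and then multiplying by the prefactor $\tfrac{1}{K^2}(qq'/|n|L)^{1/2}\asymp \tfrac{C}{K^2(|n|L)^{1/2}}$ together with $J\ll (tK)^{1/2}t^\epsilon/C$ recovers $B^*(C,n)$. Your bookkeeping is somewhat more explicit than the paper's, and your emphasis that (\ref{condad}) is what makes the Hessian minors nondegenerate in the $\nu,\nu'$ directions is exactly the right thing to highlight.
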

We now write 
$$S_{1,J,-}(t,C,L) = S_{1,J,-}^\flat(t,C,L) + S_{1,J,-}^\sharp(t,C,L),$$
where $S_{1,J,-}^\flat(t,C,L)$ corresponds to $n=0$ contribution, while $S_{1,J,-}^\sharp(t,C,L)$ corresponds to the $n\not= 0$ frequencies. We first estimate
\begin{align*}
S_{1,J,-}^\flat(t,C,L) &\ll \frac{K}{tC^2} \sum_{C<q,q' \leq 2C} \sum_{\substack{(m,q)=1 \\ 1\leq |m| \ll qt^\epsilon}} \sum_{\substack{(m',q')=1\\ 1\leq |m'| \ll q't^\epsilon}} \delta_{- q \overline{m'} + q' \overline{m} \equiv 0 \mods{qq'}} \frac{t^{2\sigma +\epsilon}}{K^{3/2}Ct^{1/2}}\\
&\ll \frac{t^{2\sigma+\epsilon}}{t^{3/2}K^{1/2}}.
\end{align*}
Taking a dyadic subdivision, we estimate
\small
\begin{align*}
S_{1,J,-}^\sharp(t,C,L)& \ll \frac{K}{tC^2} \sum_{C<q,q'\leq 2C} \sum_{\substack{(m,q)=1\\ 1\leq |m|\ll qt^\epsilon}} \sum_{\substack{(m',q')=1\\ 1\leq |m'|\ll q't^\epsilon}} \sum_{\substack{1\leq |n|\ll \frac{C(tK)^{\frac{1}{2}} t^\epsilon}{L}\\ n\equiv - q\overline{m'}+ q'\overline{m} \mods{qq'}}}B^*(C,n)\\
&\ll\frac{K}{tC^2} \sum_{C<q,q'\leq 2C} \sum_{\substack{(m,q)=1\\ 1\leq |m| \ll qt^\epsilon}} \sum_{\substack{(m',q')=1\\ 1\leq |m'|\ll q't^\epsilon}}  \sum_{\substack{1\leq |n|\ll \frac{C(tK)^{\frac{1}{2}} t^\epsilon}{L}\\ n\equiv q'\overline{m}- q\overline{m'} \mods{qq'}}} \frac{t^{2\sigma +\epsilon}}{K^{3/2}t^{1/2}(|n|L)^{\frac{1}{2}}}\\
&\ll \frac{t^{2\sigma+\epsilon}}{t^{3/2}K^{1/2}C^2L^{1/2}} \sum_{\substack{H\leq \frac{C(tK)^{1/2}t^\epsilon}{L}\\ H \hbox{ Dyadic}}} \sum_{C<q,q'\leq 2C} \sum_{\substack{(m,q)=1 \\ 1\leq |m| \ll qt^\epsilon}}  \\
&\times  \sum_{\substack{(m',q')=1\\ 1\leq|m'|\ll q't^\epsilon}} \sum_{\substack{H< |n| \leq 2H \\ n \equiv -q\overline{m'}+q'\overline{m} \mods{qq'}}} H^{-1/2} \\
& \ll \frac{t^{2\sigma+\epsilon}}{t^{3/2}K^{1/2}C^2L^{1/2}} \sum_{\substack{H\leq \frac{C(tK)^{1/2}t^\epsilon}{L}\\ H \hbox{ Dyadic}}} H^{-1/2} \sum_{C< q,q'\leq 2C} \sum_{H<|n|\leq 2H}\\
&\times \sum_{\substack{1\leq|m| \ll qt^\epsilon\\ (m,q)=1}} \sum_{\substack{1\leq |m'| \ll q't^\epsilon\\ (m',q')=1}}  \delta_{-q\overline{m'}+q'\overline{m} \equiv n \mods{qq'}}.
\end{align*}
\normalsize
We let $d= (q,q')$ and notice that looking at the congruence condition above modulo $q$ implies that $q'\overline{m} \equiv n \mod q$, which in turn implies that $d$ divides $n$. We let $q_0:= q/d, q_0'= q'/d$ and $n_0:= n/d$, so that 
$$n_0 \equiv q_0'\overline{m} \mods{q_0}, \, \hbox{and }n_0 \equiv q_0 \overline{m'} \mods{q_0'}.$$
We may thus bound 
\begin{align*}
S_{1,J,-}^\sharp (t,C,L)&\ll\frac{t^{2\sigma+\epsilon}}{t^{3/2}K^{1/2}C^2L^{1/2}} \sum_{\substack{H\leq \frac{C(tK)^{1/2}t^\epsilon}{L}\\ H \hbox{ Dyadic}}}H^{-1/2} \sum_{C<q,q'\leq 2C} \sum_{\frac{H}{d} < n_0 \leq \frac{2H}{d}} \\
&\times  \sum_{\substack{1\leq |m|\ll qt^\epsilon \\ (m,q)=1}} \sum_{\substack{1\leq |m'|\ll q't^\epsilon \\ (m',q')=1}} \delta_{q_0'\overline{m} \equiv n_0 \mods{q_0}} \delta_{q_0 \overline{m'} \equiv n_0 \mods{q_0'}}\\
&\ll \frac{t^{2\sigma+\epsilon}}{t^{3/2}K^{1/2}C^2L^{1/2}} \sum_{\substack{H\leq \frac{C(tK)^{1/2}t^\epsilon}{L}\\ H \hbox{ Dyadic}}}H^{-1/2} \sum_{C<q,q'\leq 2C} \sum_{\frac{H}{d} < n_0 \leq \frac{2H}{d}} t^\epsilon d^2\\
&\ll \frac{t^{2\sigma +\epsilon}(tK)^{1/4}}{t^{3/2} K^{1/2} C^{3/2} L} \sum_{C<q,q'\leq 2C} d\\
&\ll \frac{t^{2\sigma +\epsilon}(tK)^{1/4}}{t^{3/2} K^{1/2} C^{3/2} L}  \sum_{d\leq 2C} \sum_{\frac{C}{d}\leq q_0,q_0'\leq \frac{2C}{d}} d\\
&\ll \frac{t^{2\sigma+\epsilon}}{tK^{1/2}L}.
\end{align*}
We conclude that 
$$S_{1,J,-}(t,C,L) \ll t^{2\sigma + \epsilon} \left( \frac{1}{t^{3/2}K^{1/2}} +  \frac{1}{tK^{1/2}L}\right).$$
The same bound holds for $S_{1,J,+}(t,C,L)$, via the same analysis, so that
\begin{align*}
S_{1,J}(t,C) &\ll Kt^{\sigma+1/2+\epsilon} \sum_{\substack{1\leq L \ll Kt^\epsilon \\ L \hbox{ Dyadic}}} \left(\frac{L^{1/2}}{t^{3/4}K^{1/4}}+ \frac{1}{t^{1/2}K^{1/4}} \right)\\
&\ll t^{\sigma+\epsilon} \left(\frac{K^{5/4}}{t^{1/4}} + K^{3/4} \right).
\end{align*}
The same bound holds for all values of $J$. Since there are $O(\log t)$ many terms, we can sum over them without worsening the bound, and so the same bound holds for $\hat{S}_{1}(t,C):= \sum_J S_{1,J}(t,C)$. Thus the total contribution of $\hat{S}_1(t,C)$ to $S_l^+(t)$ is bounded by
$$\frac{t^{1+\epsilon}}{K} \left( \frac{K^{5/4}}{t^{1/4}} + K^{3/4} \right) \ll t^\epsilon \left( t^{3/4}K^{1/4} + \frac{t}{K^{1/4}}\right).$$
Choosing $K=t^{1/2}$, we obtain
$$S_l^+(t) \ll t^{1-1/8 +\epsilon}.$$

\section{Examples}\label{sectex}
In this section, we study some examples of analytic trace functions to motivate the analogy with Frobenius trace functions studied in \cite{FKM}. The analog of Kloosterman sums is given in the following example.
\begin{prop}
Let 
$$F_{it}(x) := t^{1/2} \Gamma\left(\frac{1}{2}+it\right) J_{it}(x)$$
be the normalized $J$-Bessel function of order $t$. Then, $F_{it}$ is an analytic trace function.
\end{prop}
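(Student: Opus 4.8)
The plan is to exhibit $M_t$ explicitly and read the decomposition $M_t=g_t\,e(f_t)$ demanded by Definition \ref{anatrace} off Stirling's formula. First I would take the classical Mellin transform formula for the $J$-Bessel function,
\[
\int_0^\infty J_\mu(x)\,x^{s-1}\,\mathrm dx=2^{s-1}\,\frac{\Gamma\!\left(\frac{s+\mu}{2}\right)}{\Gamma\!\left(1+\frac{\mu-s}{2}\right)},\qquad -\Re\mu<\Re s<\tfrac32,
\]
specialised to $\mu=it$, which gives
\[
M_t(s)=t^{1/2}\,\Gamma\!\left(\tfrac12+it\right)\,2^{s-1}\,\frac{\Gamma\!\left(\frac{s+it}{2}\right)}{\Gamma\!\left(1+\frac{it-s}{2}\right)},
\]
holomorphic in the strip $0<\Re s<\tfrac32$ (the only pole, at $s=-it$, lies on $\Re s=0$). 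Mellin inversion for $J_{it}$ then yields (\ref{mellintrace}) with $K_t=F_{it}$ for every $0<\sigma<\tfrac32$; if one insists on absolute convergence one first passes to $F_{it}(x)V(x/t)$ as in the remark after Definition \ref{anatrace}. I would fix $\sigma\in(1,\tfrac32)$, the lower bound being forced by the amplitude estimate near $|\nu|=t$.

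Second, I would feed the three Gamma factors into Stirling. Writing $s=\sigma+i\nu$, the moving arguments are $\frac\sigma2+i\frac{\nu+t}{2}$ and $1-\frac\sigma2+i\frac{t-\nu}{2}$, and the modulus $g_t:=|M_t|$ comes out as
\[
g_t(\sigma+i\nu)\;\asymp\; t^{1/2}\,\bigl|\nu^2-t^2\bigr|^{(\sigma-1)/2}\,\exp\!\Bigl(-\tfrac{\pi t}{2}-\tfrac\pi4\bigl(|\nu+t|-|\nu-t|\bigr)\Bigr),
\]
so $g_t$ is exponentially small in $t$ whenever $\nu>-t$ stays away from $-t$, and is of polynomial size $\asymp t^{1/2}|\nu|^{\sigma-1}$ for $\nu\ll -t$. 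For the phase, differentiating $\log M_t$ and using $\psi(z)=\log z+O(|z|^{-1})$ gives the clean formulas
\[
f_t'(\sigma+i\nu)=\frac{1}{4\pi}\log\bigl|\nu^2-t^2\bigr|+O\!\Bigl(\tfrac1{|\nu+t|}+\tfrac1{|\nu-t|}\Bigr),\qquad f_t''(\sigma+i\nu)=\frac{1}{2\pi}\,\frac{\nu}{\nu^2-t^2}+O(\cdots).
\]
Hence the stationary points of $f_t(\sigma+i\nu)-\frac{\nu}{2\pi}\log x$ sit at $\nu^2=t^2\pm x^2$; for $x\in[t,2t]$ only $|\nu|=\sqrt{t^2+x^2}\in[\sqrt2\,t,\sqrt5\,t]$ is real, so every stationary point lies in a fixed annulus $|\nu|\asymp t$. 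Choosing $c_1=\tfrac12$, $c_2=3$: for $|\nu|\le c_1 t$ one has $|f_t'-\frac1{2\pi}\log x|\gg 1$ (and in fact $g_t$ is exponentially small there), and for $|\nu|\ge c_2 t$ one has $|f_t'-\frac1{2\pi}\log x|\gg|\log(|\nu|/x)|\gg 1$, which is (\ref{condfone}). On the annulus $c_1t\le|\nu|\le c_2t$, at the stationary points $\nu^2-t^2=x^2\asymp t^2$, so (\ref{condftwo}) holds, $f_t''\asymp\nu^{-1}$; the bounds (\ref{condf}) follow by differentiating $\frac1{4\pi}\log|\nu^2-t^2|$; and the decisive condition (\ref{condad}) is the identity
\[
f_t''(\sigma+i\nu)-\frac{1}{2\pi\nu}=\frac{1}{2\pi}\,\frac{t^2}{\nu(\nu^2-t^2)}+O(\cdots)\;\asymp\;\nu^{-1}
\]
valid on that annulus away from $|\nu|=t$ — this is precisely the algebraic feature distinguishing $F_{it}$ from $e(x)$. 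The amplitude condition (\ref{condg}) then follows in the bulk because the logarithmic derivatives of $|M_t|$ are $\ll|\nu^2-t^2|^{-1}|\nu|\ll|\nu|^{-1}$, while $t^{1/2}|\nu|^{\sigma-1}\ll|\nu|^{\sigma-1/2}$ once $|\nu|\gg t$.

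The genuinely delicate point — and the step I expect to be the main obstacle — is the transitional band $|\nu+t|\lesssim t$, above all the immediate neighbourhood of $\nu=-t$, where the Stirling expansion of $\Gamma(\frac\sigma2+i\frac{\nu+t}{2})$ degenerates. There $g_t$ is only $\asymp t^{\sigma/2}$ (this is why $\sigma>1$ is imposed, so that still $g_t\ll t^{\sigma-1/2}$), and its derivatives can be as large as $t^{\sigma/2}$ pointwise, so the naive form of (\ref{condg}) is not literally available on a bounded $\nu$-interval around $-t$; one must observe instead that this large amplitude is confined to an interval of length $O(1)$, so that the $L^1$-type quantities actually entering the stationary-phase lemmas (total variation, sup norm over a dyadic block) still obey the bounds used in Lemma \ref{locK}, and that $f_t'-\frac1{2\pi}\log x$ is of size $\asymp\log t$ near $\nu=-t$, killing that contribution by integration by parts. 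The endpoint $x=t$, where a spurious stationary point materialises at $\nu=0$, is harmless for the same reason: $g_t$ is exponentially small in $t$ near $\nu=0$, and in any event $x=t$ contributes one term of $S(t)$. Granting this bookkeeping, all clauses of Definition \ref{anatrace} reduce to the two displayed formulas for $f_t'$, $f_t''$ together with the Stirling asymptotics of $g_t$.
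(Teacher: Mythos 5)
Your proof follows the same route as the paper's: write down the explicit Mellin transform of $J_{it}$, apply Stirling's formula to split $M_t = g_t\,e(f_t)$, and verify conditions (\ref{condg})--(\ref{condad}) from the resulting formulas for $g_t$, $f_t'$, $f_t''$; the key computation $f_t'' - \frac{1}{2\pi\nu} = \frac{1}{2\pi}\frac{t^2}{\nu(\nu^2-t^2)} \gg \nu^{-1}$ is identical. The one substantive divergence is your choice $\sigma\in(1,\tfrac32)$ versus the paper's $\sigma\in(0,1)$: your choice is actually the more careful one, since with $\sigma<1$ the bound $g_t\ll \nu^{\sigma-1/2}$ already fails at $j=0$ on the band $|\nu+t|\ll t$, whereas the paper dispatches that band only by the large phase derivative (i.e.\ by (\ref{condfone})), not by verifying (\ref{condg}) there. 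One small imprecision on your side: the region where the pointwise derivative bounds in (\ref{condg}) degrade is $|\nu+t|\ll t$, not merely an $O(1)$-neighbourhood of $-t$, though this is harmless since $|f_t'-\frac{1}{2\pi}\log x|\asymp\log(t/|\nu+t|)\gg 1$ holds across that whole band.
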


\begin{proof}
By \cite[p. 331]{tablestransf}, the Mellin inversion theorem holds for $F_{it}$ and the Mellin transform is given by
$$M_{F,t}(s) := \int_0^\infty F_{it}(x) x^{s-1}\mathrm{d}x = t^{1/2} \Gamma\left(\frac{1}{2}+it\right) 2^{s-1}  \frac{\Gamma\left(\frac{s+it}{2}\right)}{\Gamma\left(1+\frac{it-s}{2}\right)}, $$
for any $0< \sigma <1 $, where $s=\sigma +i\nu$. We will assume for simplicity that $t\geq 1$, the same argument holding also for negative $t$. In order to understand $M_{F,t}(\sigma +i\nu),$ we differentiate between three cases, using Stirling's formula for some of the Gamma factors. We first note that \begin{equation}\label{g1}
\left|\Gamma\left(\frac{1}{2}+it\right)\right| = \sqrt{2\pi} \exp\left(-\frac{\pi t}{2}\right) (1+O(|t|^{-1}).
\end{equation}

First assume we are in the range where $|t\pm \nu|\geq 1$, then we may apply Stirling's formula to all the Gamma factors, and find that
$$M_{F,t}(s) = t^{1/2}g_{F,t}(s)e(f_{F,t}(s)),$$
where, up to a constant, 
$$g_{F,t}(s) = \exp\left(\frac{\pi}{4} (|t-\nu| -|\nu+t| -2t)\right) |(\nu+t)(t-\nu)|^{\frac{\sigma-1}{2}} (1+O(\max\{ t^{-1}, |t\pm \nu|^{-1}\})),$$ 
and 
$$2\pi f_{F,t}(s) = \frac{\nu+t}{2} \log\left|\frac{\nu + t}{2e}\right| + \frac{\nu-t}{2}\log\left|\frac{t-\nu}{2e}\right| + \nu \log 2.$$
We note that if $\nu \geq -\frac{t}{2}$, then $g_{F,t}(s)$ is negligible. We therefore only focus on the case where $\nu < -\frac{t}{2}$ and verify condition (\ref{condfone}) for $f_{F,t}$. We thus compute 
$$2\pi \frac{\mathrm{d}}{\mathrm{d}\nu}f_{F,t}(s) = \frac{1}{2} \log\left|\frac{t^2- \nu^2}{4e^2}\right|  + 1 +\log 2.$$ 
Since we only consider $\nu \gg t$ by exponential decay of $g_{F,t}$ otherwise, we find that 
$$\log\left|\frac{(t^2-\nu^2)^{1/2}}{x}\right| \ll 1,$$
may only occur if $\nu \asymp t$, for $x\in [t,2t]$.

On the other hand, if we are in the range $|t-\nu| < 1$, then we may not apply Stirling's formula for the Gamma factor in the denominator. However, we will have that $|t+\nu| \gg  t$, and thus by (\ref{g1}) and the exponential decay of Gamma factors, we get that the contribution is negligible. Finally, if we are in the range $|t+\nu| <1$, then the phase of $M_{F,t}(s)$ will be of the form 
$$2\pi \tilde{f}_{F,t}(s) := \frac{\nu-t}{2} \log\left|\frac{t-\nu}{2e}\right| + \nu \log 2,$$
and so 
$$2\pi \frac{\mathrm{d}}{\mathrm{d}\nu} \tilde{f}_{F,t}(s) - \log(x) \gg 1$$
in this region, and is thus negligible by integration by parts. Moreover, looking at $f_{F,t}$, there can be no stationary point in any region such that $\nu = -t +o(t).$  \\
We thus assume from now on that we are in the region where $|t\pm \nu|\gg t$, and $t\ll \nu\leq -t$, and will show that conditions (\ref{condg}), (\ref{condftwo}), (\ref{condf}) and (\ref{condad}) hold for $g_{F,t}(s)$ and $f_{F,t}(s)$. Indeed, in this region, 
$$t^{1/2}g_{F,t}(s)= t^{1/2}|(\nu+t)(\nu-t)|^{\frac{\sigma-1}{2}} (1 + O (t^{-1})) \ll t^{\sigma-1/2},$$
and thus 
$$t^{1/2}\frac{\mathrm{d}^j}{\mathrm{d}\nu^j}g_{F,t}(s) \ll t^{\sigma-1/2-j},$$
for all $j\geq 0$, proving (\ref{condg}). We now compute
$$2\pi \frac{\mathrm{d}^2}{\mathrm{d}\nu^2} f_{F,t}(s) =  \frac{\nu}{(\nu^2-t^2)} \gg \nu^{-1},$$
and thus
$$2\pi \frac{\mathrm{d}^j}{\mathrm{d}\nu^j} f_{F,t}(s) \ll_{j,\epsilon} \nu^{1+\epsilon -j},$$
for all $j\geq 0$, proving (\ref{condftwo}) and (\ref{condf}). Finally we look at
$$2\pi \frac{\mathrm{d}^2}{\mathrm{d}\nu^2} f_{F,t}(s) - \frac{1}{\nu} = \frac{t^2}{\nu (\nu^2-t^2)} \gg \nu^{-1}, $$
proving (\ref{condad}), concluding the proof that $F_{it}$ is an analytic trace function.

\end{proof}
Another interesting example is that of Bessel functions of high rank. These can be thought of as analogs to hyper-Kloosterman sums. We study here higher rank Bessel functions appearing in the Voronoi summation formulas in higher rank (as in \cite{HighBessel}).
\begin{prop}
For any $n\geq 3$, let 
$$J_{n,t} := \frac{t^{\frac{n-1}{2}}}{2\pi in} \int_{(\frac{1}{4})} \Gamma\left(\frac{s-int}{n}\right) \Gamma\left(\frac{s}{n}+ \frac{it}{n-1}\right)^{n-1} e\left(\frac{s}{4}\right) x^{-s}\mathrm{d}s.$$
Then $J_{n,t}$ is an analytic trace function.
\end{prop}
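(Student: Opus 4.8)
The plan is to follow the same route as for $F_{it}$: compute the Mellin transform, apply Stirling to the Gamma factors to split it as $g_{n,t}\,e(f_{n,t})$, identify the range of $\nu$ on which the integrand is not exponentially negligible, and check conditions (\ref{condg})--(\ref{condad}) there. By Mellin inversion — the integrand is holomorphic in $\Re s>0$ and on $\Re s=\tfrac14$ decays like $|\Im s|^{\Re s-n/2}$, which for $n\ge 3$ is absolutely integrable — the Mellin transform of $J_{n,t}$ is
$$M_{n,t}(s)=\frac{t^{(n-1)/2}}{n}\,\Gamma\!\Big(\frac{s-int}{n}\Big)\Gamma\!\Big(\frac{s}{n}+\frac{it}{n-1}\Big)^{n-1}e\!\Big(\frac{s}{4}\Big),\qquad s=\sigma+i\nu,\ \ \sigma=\tfrac14 .$$
Running Stirling on both Gamma factors wherever their arguments exceed $1$ in modulus writes $M_{n,t}(\sigma+i\nu)=g_{n,t}(\sigma+i\nu)\,e(f_{n,t}(\sigma+i\nu))$ with $f_{n,t}$ real and, up to an affine function of $\nu$,
$$2\pi f_{n,t}(\sigma+i\nu)=\Big(\tfrac{\nu}{n}-t\Big)\log\Big|\tfrac{\nu}{n}-t\Big|+(n-1)\Big(\tfrac{\nu}{n}+\tfrac{t}{n-1}\Big)\log\Big|\tfrac{\nu}{n}+\tfrac{t}{n-1}\Big|,$$
while $|g_{n,t}(\sigma+i\nu)|\asymp t^{(n-1)/2}\big|\tfrac{\nu}{n}-t\big|^{\frac{\sigma}{n}-\frac12}\big|\tfrac{\nu}{n}+\tfrac{t}{n-1}\big|^{(n-1)(\frac{\sigma}{n}-\frac12)}\exp\!\big(\tfrac{\pi}{2}\Psi(\tfrac{\nu}{n})\big)$ with the piecewise-linear weight $\Psi(\mu)=-n\mu-|\mu-t|-(n-1)\big|\mu+\tfrac{t}{n-1}\big|$.

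A direct computation gives $\Psi\equiv 0$ for $\mu\le-\tfrac{t}{n-1}$ and $\Psi(\mu)\le-c\,t$ for $\mu\ge-\tfrac{t}{n-1}+t^{\epsilon}$; hence $g_{n,t}$ is $O(t^{-1000})$ unless $\nu\le-\tfrac{nt}{n-1}+O(t^{\epsilon})$, and on that half-line $|g_{n,t}(\sigma+i\nu)|\ll t^{(n-1)/2}|\nu|^{\sigma-n/2}\ll|\nu|^{\sigma-1/2}$ once $|\nu|\gg t$. So all the mass lives on $\nu\asymp-t$. Two transition points need attention. Near $\nu=nt$, where $\tfrac{\nu}{n}-t$ is small, we have $\Psi\le-cnt$, so that zone is already negligible and Stirling is not needed. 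The delicate point is $\nu_0:=-\tfrac{nt}{n-1}$, where $\Gamma(\tfrac{s}{n}+\tfrac{it}{n-1})^{n-1}$ has argument $O(t^{\epsilon})$ and Stirling fails for it, and the crude size bound there overshoots the target $|\nu|^{\sigma-1/2}$. Exactly as in the $F_{it}$ case, the surviving Stirling term from $\Gamma(\tfrac{s-int}{n})$ gives $2\pi f_{n,t}'\asymp\tfrac1n\log t$ near $\nu_0$, so $\big|f_{n,t}'-\tfrac{1}{2\pi}\log x\big|\asymp\log t$ for $x\asymp t$; one then deforms the part of the contour with $|\nu-\nu_0|\ll t^{\epsilon}$ to $\Re s=\tfrac14+A$ for a large constant $A$, which crosses no poles since both Gamma factors have all their poles in $\Re s\le 0$. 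On the shifted horizontal segment the gain $x^{-A}$, combined with the unchanged exponential weight and $|\Gamma(\tfrac{s}{n}+\tfrac{it}{n-1})^{n-1}|=O_A(1)$, forces a bound $O(t^{-1000})$, and the vertical connectors are negligible because $\Gamma(\tfrac{s}{n}+\tfrac{it}{n-1})$ decays exponentially at imaginary part $\asymp t^{\epsilon}$. Thus the $\nu_0$-zone contributes only to the error term.

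On the bulk range $t^{1-\epsilon}\ll|\nu|\ll t^{1+\epsilon}$, $\nu<0$, bounded away from $\nu_0$, one reads off $g_{n,t}^{(j)}(\sigma+i\nu)\ll|\nu|^{\sigma-1/2-j}$ (condition (\ref{condg})) and $f_{n,t}^{(j)}\ll_{\epsilon}|\nu|^{1+\epsilon-j}$ (condition (\ref{condf})). Differentiating the phase,
$$2\pi f_{n,t}''(\sigma+i\nu)=\frac{1}{n(\nu-nt)}+\frac{(n-1)^2}{n((n-1)\nu+nt)},$$
a sum of two same-sign terms of size $\asymp|\nu|^{-1}$ on this range, so $|f_{n,t}''|\gg|\nu|^{-1}$, giving (\ref{condftwo}) wherever (\ref{condfone}) fails. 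The equation $2\pi f_{n,t}'=\log x$ has a unique solution $\nu^{\ast}=-n\gamma^{\ast}t$ on the bulk range, with $(\gamma^{\ast}+1)(\gamma^{\ast}-\tfrac{1}{n-1})^{n-1}=1$ and $\gamma^{\ast}>\tfrac{1}{n-1}$; this produces a genuine stationary point and yields $\|J_{n,t}\|_\infty\asymp 1$. Since $\gamma^{\ast}>\tfrac{1}{n-1}$ forces $|\nu^{\ast}|>|\nu_0|$, choosing $c_1\in(\tfrac{n}{n-1},n\gamma^{\ast})$ and $c_2>n\gamma^{\ast}$ puts $\nu_0$ into the regime $|\nu|\le c_1t$, where (\ref{condfone}) was already verified. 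Finally, the identity
$$2\pi\Big(f_{n,t}''(\sigma+i\nu)-\tfrac{1}{2\pi\nu}\Big)=\frac{n^2t^2}{\nu(\nu-nt)\big((n-1)\nu+nt\big)}$$
has numerator identically $n^2t^2\ne0$, so no cancellation can occur and (\ref{condad}) holds with left side $\asymp|\nu|^{-1}$, in direct analogy with the formula $\tfrac{t^2}{\nu(\nu^2-t^2)}$ obtained for $F_{it}$.

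The main obstacle is the transition zone at $\nu_0=-\tfrac{nt}{n-1}$: because the degenerate Gamma factor occurs to the $(n-1)$st power, its naive contribution there overshoots the target size, and one must instead exploit holomorphy together with the $e(s/4)$-weight via the contour shift above, taking care that the deformation stays in the pole-free half-plane $\Re s>0$. The remaining work — tracking the affine and lower-order terms from Stirling so that they respect (\ref{condf}) over the whole bulk range, and the entirely analogous analysis for $t<0$ — is routine.
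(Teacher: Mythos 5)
Your overall route — Mellin inversion, Stirling on both Gamma factors, splitting into amplitude and phase, isolating the half-line $\nu\le -\tfrac{nt}{n-1}$ on which the exponential weight $\exp\bigl(\tfrac{\pi}{2}\Psi(\nu/n)\bigr)$ is nontrivial, and then checking (\ref{condg})--(\ref{condad}) on the bulk $\nu\asymp -t$ — is precisely the paper's route, and the key formulae agree: your $2\pi f_{n,t}''$ matches the paper's, and your numerator $n^2t^2$ in the identity for (\ref{condad}) is in fact the correct one (the paper has $nt^2$, a harmless typo). Two remarks are worth making. First, a small imprecision: for $\mu\ge -\tfrac{t}{n-1}+t^{\epsilon}$ one has $\Psi(\mu)=-2t-2(n-1)\mu\le -2(n-1)t^{\epsilon}$, so the claim ``$\Psi(\mu)\le -ct$'' in that range is only ``$\Psi\le -ct^{\epsilon}$''; this is still superpolynomially small after exponentiation, so the conclusion that the integrand is negligible for $\nu>\nu_0+O(t^{\epsilon})$ stands.

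The more substantive point concerns your contour shift near $\nu_0=-\tfrac{nt}{n-1}$. You are right that the paper does not really address the fact that near $\nu_0$ the modulus of $M_{J_{n,t}}(\sigma+i\nu)$ is of size $t^{(n-2)/2+\sigma/n}$, which for $\sigma=1/4$ and $n\ge 3$ strictly overshoots $|\nu|^{\sigma-1/2}$; the paper merely asserts that ``the phase cannot vanish and the contribution is also negligible'' and leaves it at that, and your attempt to make this rigorous is a genuine improvement in intent. However, the contour-shift argument as written does not close the gap. The claim that the connectors at $\nu=\nu_0\pm t^{\epsilon}$ are negligible ``because $\Gamma\bigl(\tfrac{s}{n}+\tfrac{it}{n-1}\bigr)$ decays exponentially'' is correct for the upper connector at $\nu_0+t^{\epsilon}$ (there $\Psi<0$), but fails for the lower connector at $\nu_0-t^{\epsilon}$: for $\nu\le\nu_0$ the exponential decay of $\Gamma\bigl(\tfrac{s}{n}+\tfrac{it}{n-1}\bigr)^{n-1}$ together with that of $\Gamma\bigl(\tfrac{s-int}{n}\bigr)$ is \emph{exactly} compensated by the exponential growth $|e(s/4)|=e^{-\pi\nu/2}$ — this is precisely the identity $\Psi\equiv 0$ on that half-line that you yourself computed, and it holds for every $\Re s$, not just $\Re s=1/4$. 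Hence on the lower connector the modulus of the integrand at $\sigma=1/4$ is $\asymp t^{(n-2)/2-\sigma(n-1)/n}(t^{\epsilon})^{(n-1)(\sigma/n-1/2)}$, which for $n=3,\sigma=1/4$ is $\asymp t^{1/3-5\epsilon/6}$: polynomially large, not $O(t^{-1000})$. So the shift merely moves the problematic mass from the vertical segment onto the lower horizontal connector without killing it, and one still needs to exploit the $\asymp\log t$ non-stationarity of the phase (as the paper vaguely invokes) or a more elaborate deformation to control this region. As it stands, the transition zone at $\nu_0$ remains unjustified in your write-up, just as it does in the paper.
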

\begin{proof}
Let 
$$M_{J_{n,t}}(s):= \frac{t^{\frac{n-1}{2}}}{n} \Gamma\left(\frac{s-int}{n}\right) \Gamma\left(\frac{s}{n}+\frac{it}{n-1}\right)^{n-1}e\left(\frac{s}{4}\right),$$
with $s=\frac{1}{4}+i\nu$. We assume again for simplicity that $t>1$ and want to show that $M_{J_{n,t}}$ satisfies all the conditions in Definition \ref{anatrace}. As in the case of the Bessel function, we wish to use Stiriling's formula to understand the phase and amplitude of $M_{J_{n,t}}$. Again we distinguish three different cases. First assume we are in the range $|\nu- t|\geq n$ and $|(n-1)\nu + nt|\geq n(n-1).$ We may then apply Stirling's formula to both Gamma factors to obtain
$$M_{J_{n,t}}(s) = e\left(\frac{1}{8}\right)\frac{t^{\frac{n-1}{2}}}{n} g_{J_{n,t}}(s) e(f_{J_{n,t}}(s)),$$
where $g_{J_{n,t}}(s) $ is given by

\begin{align*}
&\exp\left(-\frac{\pi(|\nu-nt| + |(n-1)\nu+nt| +n\nu)}{2n}\right) \left|\frac{\nu-nt}{n}\right|^{\frac{1}{4n}-\frac{1}{2}} \left|\frac{\nu}{n}+\frac{t}{n-1}\right|^{(n-1)(\frac{1}{4n}-\frac{1}{2})}\\
&\times\left(1+O\left((1+|\nu-nt|)^{-1}+\left(1+\left|\frac{\nu}{n}+\frac{t}{n-1}\right|\right)^{-1}\right)\right),
\end{align*}
and 
$$2\pi f_{J_{n,t}}(s) = \frac{(n-1)\nu+nt}{n} \log\left|\frac{\nu}{en}+\frac{t}{e(n-1)}\right| + \frac{\nu-nt}{n}\log\left|\frac{\nu-nt}{ne}\right|.$$
We note that if $\nu \geq -\frac{n}{2(n-1)} t$, then $g_{J_{n,t}}$ is negligible. We therefore only focus on the case where $\nu< -\frac{n}{2(n-1)} t$ and verify condition (\ref{condfone}) for $f_{J_{n,t}}$. We thus compute
$$2\pi \frac{\mathrm{d}}{\mathrm{d}\nu} f_{J{n,t}} (s) = \frac{n-1}{n}\log\left|\frac{\nu}{en}+ \frac{t}{e(n-1)}\right| + \frac{1}{n} \log\left|\frac{\nu-nt}{ne}\right| +1.$$
Since we only consider $\nu \gg t$ by exponential decay of $g_{J_{n,t}}$ otherwise, we find that 
$$\log\left|\left(\frac{(n-1)\nu+nt}{n-1}\right)^{\frac{n-1}{n}} \frac{(\nu-nt)^{\frac{1}{n}}}{xn}\right| \ll 1, $$
may only occur if $\nu \asymp x$, for $x\asymp t$. Moreover, as in the Bessel function case, we see from this that in the two cases where we might not use Stirling's formula for one of the Gamma factors, either $g_{J_{n,t}}$ will be negligible, or the phase cannot vanish and the contribution is also negligible.  \\
We thus assume from now on that we are in the region where $|(n-1)\nu +nt|, |\nu-nt| \gg t$ and $t \ll \nu \leq -\frac{n}{(n-1)}t$, and will show that conditions (\ref{condg}), (\ref{condftwo}), (\ref{condf}) and (\ref{condad}) hold for $g_{J_{n,t}}(s)$ and $f_{J_{n,t}}(s)$. Indeed, in this region, 
$$t^{\frac{n-1}{2}} g_{J_{n,t}}(s) = t^{\frac{n-1}{2}} \left|\frac{\nu-nt}{n}\right|^{\frac{1}{4n}-\frac{1}{2}} \left|\frac{\nu}{n}+\frac{t}{n-1}\right|^{(n-1)(\frac{1}{4n}-\frac{1}{2})} (1+O(t^{-1}))\ll t^{\frac{1}{4}-\frac{1}{2}}, $$
and thus 
$$t^{\frac{n-1}{2}} \frac{\mathrm{d}^j}{\mathrm{d}\nu^j} g_{J_{n,t}}(s) \ll t^{\frac{1}{4}-\frac{1}{2}-j},$$
for all $j\geq 0$, proving (\ref{condg}). We now compute 
$$2\pi \frac{\mathrm{d}^2}{\mathrm{d}\nu^2} f_{J_{n,t}}(s) = \frac{(n-1)\nu + nt(2-n)}{(\nu-nt)((n-1)\nu+nt)} \gg \nu^{-1}, $$
since $\nu<0$, and thus 
$$2\pi \frac{\mathrm{d}^j}{\mathrm{d}\nu^j} f_{J_{n,t}}(s) \ll_{j,\epsilon} \nu^{1+\epsilon -j},$$
for all $j\geq 0$, proving (\ref{condftwo}) and (\ref{condf}). Finally, we look at 
$$2\pi \frac{\mathrm{d}^2}{\mathrm{d}\nu^2} f_{J_{n,t}}(s) - \frac{1}{\nu} = \frac{nt^2}{\nu(\nu-nt)((n-1)\nu+nt)} \gg \nu^{-1},$$
proving (\ref{condad}), concluding the proof that $J_{n,t}$ is an analytic trace function.
\end{proof}
We end this section with an example motivating condition (\ref{condad}). Namely, we study $e(x)$ in the range $x\in [t,2t]$ and show that it satisfies all the conditions to be an analytic trace function, besides (\ref{condad}). By Mellin inversion, we thus have
\begin{align*}
V\left(\frac{x}{t}\right) e(x) &=\frac{1}{2\pi} \int_\R t^{i\nu} V^\dagger(-t,i\nu) x^{-i\nu}\mathrm{d}\nu\\
&:= \frac{1}{2\pi} \int_\R M_{e,t}(i\nu) x^{-i\nu}\mathrm{d}\nu, 
\end{align*}
where
$$M_{e,t}(i\nu) = t^{i\nu} V^\dagger(-t,i\nu).$$
We first note that by Lemma \ref{Munshilemma}, we may assume that $\nu \asymp t$, for otherwise $V^\dagger(-t,i\nu)$ is negligible. We now use Lemma 5 in \cite{MR3369905} to write in this region 
$$M_{e,t}(i\nu)= g_{e,t}(i\nu) e(f_{e,t}(i\nu),$$
where, up to a constant,
$$g_{e,t}(i\nu) = \nu^{-1/2} V\left(-\frac{\nu}{2\pi t}\right)(1+O(\nu^{-3/2})),$$
and
$$f_{e,t}(i\nu)= \frac{\nu}{2\pi}\log\left(-\frac{\nu}{2\pi e}\right) .$$
One now verifies that 
$$g_{e,t}^{(j)}(i\nu) \ll_j \nu^{-1/2-j},$$
for all $j\geq 0$. We compute 
$$f_{e,t}'(i\nu) = \frac{1}{2\pi} \log\left(-\frac{\nu}{2\pi e}\right) + \frac{1}{2\pi},$$
and 
$$f_{e,t}^{(j)}(i\nu) = \frac{(-1)^{j}}{2\pi \nu^{j-1}},$$
for $j\geq 2$. We thus have that $f_{e,t}$ satisfies (\ref{condftwo}), (\ref{condf}), and the only condition not satisfied is (\ref{condad}). Given that our results should generalise to holomorphic forms as well as Eisenstein series, this example illustrates the necessity of condition (\ref{condad}), since the divisor function, $d(n)$, correlates with additive characters \cite[Theorem 7.15]{MR882550}.

\section{Horocycle twists}
In this section, we prove Theorem \ref{horothm}. We thus let $K_t : \R_{>0} \rightarrow \C$ be an analytic trace function, and $f$ be a Maass form as in the previous sections. Let $[\alpha,\beta] \subset [1,2]$ and $V$ be a smooth compactly supported function in $[\frac{1}{2},\frac{5}{2}]$, such that $x^jV^{j}(x) \ll_j 1$. We study 
\small
\begin{align*}
\int_\alpha^\beta f(x+iy) K_{1/y}\left(\frac{x}{y}\right) V(x) \mathrm{d}x&= \sum_{n\not=0} \frac{\r_f(n)}{ |n|^{1/2}} W_{it_f}(4\pi |n| y) \int_\alpha^\beta K_{1/y}\left(\frac{x}{y}\right)e(nx) V(x) \mathrm{d}x. 
\end{align*}
\normalsize
The proof of the theorem will then follow from the following proposition. 
\begin{prop}\label{ftrsf}
Let $K_t$ be an analytic trace function. Then there exists an analytic trace function, $\tilde{K}_t(x)$, such that the Fourier transform,
$$\hat{K}_t(x) := t^{1/2}\int_1^2 K_t(tu) V(u) e(-xu) \mathrm{d}u,$$
satisfies
$$\hat{K}_t(x) = \tilde{K}_t(x) +O(t^{-1/2}).$$
\end{prop}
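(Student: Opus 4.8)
The plan is to compute the Mellin transform of $\hat K_t$ in the $x$-variable in closed form, extract its main term, and show that this main term is the Mellin transform of an analytic trace function.

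First I would insert the localised spectral expansion. Since $tu\in[t,2t]$ whenever $u\in[1,2]$, Lemma \ref{locK} gives $K_t(tu)=\frac{1}{2\pi}\int_{|\nu|\asymp t}g_t(\sigma+i\nu)e(f_t(\sigma+i\nu))(tu)^{-\sigma-i\nu}\,\mathrm{d}\nu+O(t^{-1000})$, where $|\nu|\asymp t$ abbreviates the union of the supports of the relevant $W_l$. Substituting this into the definition of $\hat K_t$, the inner $u$-integral is exactly $V^\dagger(x,1-\sigma-i\nu)$; taking the Mellin transform in $x$ and using the classical identity $\int_0^\infty e(-xu)x^{w-1}\,\mathrm{d}x=\Gamma(w)(2\pi i u)^{-w}$ (valid for $0<\Re(w)<1$, the interchange being justified by a convergence-factor argument since $\hat K_t$ is the Fourier transform of a smooth compactly supported function and hence decays rapidly) one obtains, for $w=\rho_0+i\xi$,
$$M_{\hat K,t}(w)=\frac{t^{1/2-\sigma}\Gamma(w)(2\pi i)^{-w}}{2\pi}\int_{|\nu|\asymp t}g_t(\sigma+i\nu)\,e(f_t(\sigma+i\nu))\,t^{-i\nu}\,\tilde V\big((1-\sigma-\rho_0)-i(\nu+\xi)\big)\,\mathrm{d}\nu+O(t^{-500}),$$
where $\tilde V$ is the (entire, rapidly decaying in vertical strips) Mellin transform of $V$. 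In particular $M_{\hat K,t}$ is analytic in a vertical strip, and since $\tilde V$ forces $\nu\approx-\xi$ it is negligible outside $|\xi|\asymp t$.

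Next I would analyse the $\nu$-integral. The rapid decay of $\tilde V$ confines $\nu$ to within $O(t^{\epsilon})$ of $-\xi$; on that range $g_t(\sigma+i\nu)$ is essentially constant by (\ref{condg}), while Taylor-expanding $f_t(\sigma+i\nu)$ about $\nu=-\xi$ — the quadratic and higher terms being $O(t^{-1+\epsilon})$ by (\ref{condf}) — reduces the integral to the elementary Fourier integral of $\tilde V$ against a linear phase, producing a bounded factor depending only on $w$. Together with Stirling's formula for $\Gamma(w)$ (recall $|\xi|\asymp t$) this yields
$$M_{\hat K,t}(\rho_0+i\xi)=\tilde g_t(\rho_0+i\xi)\,e\big(\tilde f_t(\rho_0+i\xi)\big)+O\big(|\xi|^{\rho_0-1/2}\,t^{-1+\epsilon}\big),$$
where $\tilde g_t$ is non-oscillatory of size $|\xi|^{\rho_0-1/2}$ and, up to lower order, $\tilde f_t(\rho_0+i\xi)=f_t(\sigma-i\xi)+\tfrac{\xi}{2\pi}\log\!\big(\tfrac{\xi t}{2\pi e}\big)$. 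I would then \emph{define} $\tilde K_t$ to be the function whose Mellin symbol is this leading term $\tilde g_t(w)e(\tilde f_t(w))$, the required analyticity being inherited from $M_{\hat K,t}$. Most conditions of Definition \ref{anatrace} then transfer cleanly: differentiating the phase relation gives $\tilde f_t''(\rho_0+i\xi)=f_t''(\sigma-i\xi)+\tfrac{1}{2\pi\xi}$, hence $\tilde f_t''(\rho_0+i\xi)-\tfrac{1}{2\pi\xi}=f_t''(\sigma-i\xi)$, so (\ref{condftwo}) and (\ref{condad}) for $\tilde K_t$ follow respectively from (\ref{condad}) and (\ref{condftwo}) for $K_t$ (the two conditions exchanging roles), (\ref{condf}) transfers since the discrepancy is $O(|\xi|^{1-j})$, and (\ref{condg}) for $\tilde g_t$ follows from (\ref{condg}) for $g_t$ by the chain rule applied to Stirling's expansion and to the bounded $w$-factor. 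Finally, writing $\hat K_t(x)-\tilde K_t(x)=\frac{1}{2\pi i}\int_{(\rho_0)}\big(M_{\hat K,t}(w)-M_{\tilde K,t}(w)\big)x^{-w}\,\mathrm{d}w$ and inserting the error bound over the range $|\xi|\asymp t$ (length $\asymp t$) gives $\hat K_t(x)-\tilde K_t(x)\ll t^{-1/2+\epsilon}$ for $x\asymp t$, while both sides are negligibly small for $x$ outside that range; keeping one further term in the expansion of the $\nu$-integral removes the $t^{\epsilon}$ and yields the stated $O(t^{-1/2})$.

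The main obstacle is verifying condition (\ref{condfone}) for the constructed phase, i.e. showing that $\tilde f_t'(\rho_0+i\xi)=-f_t'(\sigma-i\xi)+\tfrac{1}{2\pi}\log(\xi t/2\pi)$ stays at distance $\gg 1$ from $\tfrac{1}{2\pi}\log x$, for $x\in[t,2t]$, whenever $|\xi|$ lies outside the expected window $[c_1't,c_2't]$ — equivalently, that the Fourier transform does not manufacture a spurious stationary point. This reduces to separating $f_t'(\sigma-i\xi)$ from $\tfrac{1}{2\pi}\log(\xi t/2\pi x)$, which must be extracted from (\ref{condfone}) for $K_t$ with careful bookkeeping of the exact support of $V$; the transfer of the remaining conditions is by contrast essentially mechanical. (Working through the Mellin transform of $\hat K_t$, rather than applying Lemma \ref{newstat} directly to $V^\dagger(x,1-\sigma-i\nu)$, is what makes this feasible: the latter route leaves an amplitude factor $\check V\big(\tfrac{|\nu|}{2\pi x}\big)$ that still depends on $x$ and would have to be removed by a second stationary phase before any $x$-independent Mellin symbol could be read off.)
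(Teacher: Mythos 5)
Your proof reaches essentially the same Mellin symbol but by a genuinely different route. The paper keeps the $x$-variable and the $\nu$-integral, applies the stationary phase asymptotic (Lemma 5 of \cite{MR3369905}) to $V^\dagger(x,1-\sigma-i\nu)$, re-reads the resulting $x^{i\nu}$-weighted $\nu$-integral as a Mellin inversion along $\Re(s)=1-\sigma$, and then removes the residual $x$-dependent amplitude $V\bigl(\frac{\nu}{2\pi x}\bigr)$ not by a second stationary phase as you suggest but by a support argument: on the support of the cut-off $W$, the combined phase is degenerate only for $\frac{\nu}{2\pi x}\in[\frac{1}{2},\frac{5}{2}]$, so outside that window repeated integration by parts kills the contribution. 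You instead Mellin-transform $\hat K_t$ in $x$ first, trading the $V^\dagger$-asymptotic for the closed form $\Gamma(w)(2\pi i u)^{-w}$; the rapid decay of $\tilde V$ collapses the $\nu$-integral to $\nu\approx-\xi$ and Stirling's expansion of $\Gamma(w)$ directly produces the phase correction $\frac{\xi}{2\pi}\log\bigl(\frac{\xi t}{2\pi e}\bigr)$. This sidesteps the $x$-dependent amplitude issue entirely, at the cost of an $\epsilon$-loss ($O(t^{-1/2+\epsilon})$ rather than $O(t^{-1/2})$) unless, as you note, you push the Taylor expansion of $f_t$ one order further; the paper gets the clean bound for free from the $O(|\nu|^{-3/2})$ error term in the $V^\dagger$-asymptotic. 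Your central observation that (\ref{condftwo}) and (\ref{condad}) exchange roles under $f_t\mapsto\tilde f_t$, because $\tilde f_t''-\frac{1}{2\pi\xi}=f_t''(\sigma-i\xi)$, is precisely the paper's verification and is the real content of the proposition. Finally, the ``main obstacle'' you flag --- verifying (\ref{condfone}) for the new phase outside the window $|\xi|\asymp t$ --- is not resolved by the paper by estimating the new phase there either, but by cutting off the new amplitude: the paper's $\tilde g_t$ carries the factor $W(-\nu)$, supported only where (\ref{o1}) already holds, so the Mellin symbol vanishes identically outside the relevant window and (\ref{condfone}) becomes vacuous. You should incorporate the same cut-off into your $\tilde g_t$ rather than attempt the phase bookkeeping you describe.
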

\begin{proof}
We have
\begin{align*}
\int_1^2 K_t(tu)V(u)e(-xu) \mathrm{d}u &= \frac{1}{2\pi i} \int_{(\sigma)} M_t(s) \int_1^2 (tu)^{-s} V(u) e(-xu) \mathrm{d}u \mathrm{d}s\\
&= \frac{1}{2\pi i} \int_{(\sigma)} M_t(s)t^{-s} V^\dagger(x,1-s) \mathrm{d}s.
\end{align*}

We note that by the properties of $M_t(s)$, discussed in Section \ref{anak}, it is sufficient to consider $\nu \asymp t$, such that for some $x\in [t,2t]$,
\begin{equation}\label{o1}
f_t'(\sigma+i\nu) - \frac{\log x}{2\pi} = o(1),
\end{equation}
for otherwise by repeated integration by parts, the integral is negligible. By Lemma 5 of \cite{MR3369905}, we may write
$$V^\dagger(x,1-\sigma-i\nu) = \frac{\sqrt{2\pi} e(1/8)}{\sqrt{\nu}} V\left(-\frac{\nu}{2\pi x}\right) \left(-\frac{\nu}{2\pi x}\right)^{1-\sigma} \left(-\frac{\nu}{2\pi ex}\right)^{-i\nu} + O(|\nu|^{-3/2}).$$
We thus have that the main term of $\hat{K}_t(x)$ is 
\begin{align*}
&\frac{e(1/8)t^{1/2-\sigma}}{\sqrt{2\pi} i} \int_{(\sigma)} M_t(\sigma+i\nu) W(\nu) \frac{t^{-i\nu}}{\sqrt{\nu}}   V\left(-\frac{\nu}{2\pi x}\right) \left(-\frac{\nu}{2\pi x}\right)^{1-\sigma} \left(-\frac{\nu}{2\pi ex}\right)^{-i\nu} \mathrm{d}\nu,
\end{align*}
where $W$ is a smooth compactly supported function such that $W^{(j)}(\nu)\ll_j \nu^{-j}$, and supported only whenever (\ref{o1}) holds. We may thus rewrite the main term as
$$\frac{1}{2\pi i} \int_{(1-\sigma)} \tilde{M}_{t,x}(1-\sigma+i\nu) x^{\sigma-1-i\nu} \mathrm{d}\nu,$$
where up to a constant, 
$$\tilde{M}_{t,x}(1-\sigma+i\nu) = t^{1/2-\sigma+i\nu} M_t(\sigma-i\nu) W(-\nu) V\left(\frac{\nu}{2\pi x}\right) \nu^{1/2-\sigma+i\nu}(2\pi e)^{-i\nu}.$$
We write
$$\tilde{M}_{t,x}(1-\sigma +i\nu) = \tilde{g}_{t,x}(1-\sigma+i\nu)e(\tilde{f}_t(1-\sigma+i\nu)),$$
where 
$$\tilde{g}_{t,x}(1-\sigma+i\nu)= t^{1/2-\sigma} W(-\nu)g_t(\sigma-i\nu)V\left(\frac{\nu}{2\pi x}\right) \nu^{1/2-\sigma},$$
and
$$\tilde{f}_t(1-\sigma+i\nu)= \frac{\nu}{2\pi} \log(t\nu) + f_t(\sigma-i\nu).$$
We compute
$$\frac{\mathrm{d}}{\mathrm{d}\nu} \tilde{f}_t(1-\sigma + i\nu)  -\frac{1}{2\pi} \log x= \frac{1}{2\pi} \log\left(\frac{t\nu}{2\pi x}\right) -f_t'(\sigma-i\nu),$$
and note that if $\frac{\nu}{2\pi x} \not \in [\frac{1}{2},\frac{5}{2}],$ then by (\ref{o1}), we have that (\ref{condfone}) holds, so that by repeated integration by parts the integral in that region is negligible. We may therefore write 
$$\int_{(1-\sigma)} \tilde{M}_{t,x}(1-\sigma+i\nu) x^{\sigma-1-i\nu}\mathrm{d}\nu = \int_{(1-\sigma)} \tilde{M}_t(1-\sigma+i\nu) x^{\sigma-1-i\nu}\mathrm{d}\nu + O(t^{-100}),$$
where $\tilde{M}_t(1-\sigma+i\nu) = \tilde{g}_t(1-\sigma+i\nu) e(\tilde{f}(1-\sigma+i\nu)),$ and
$$\tilde{g}_t(1-\sigma+i\nu)= t^{1/2-\sigma} W(-\nu) g_t(1-\sigma+i\nu) \nu^{1/2-\sigma}.$$
In the range $\nu \asymp t$, we have
$$\tilde{g}_t^{(j)} (1-\sigma+i\nu) \ll t^{1/2-\sigma-j},$$
and therefore $\tilde{g}_t$ satisfies condition (\ref{condg}). We moreover have
$$\frac{\mathrm{d}^2}{\mathrm{d}\nu^2} \tilde{f}_t(1-\sigma+i\nu) = \frac{1}{2\pi \nu} + f_t''(\sigma-i\nu) \gg \nu^{-1},$$
by (\ref{condad}) and thus (\ref{condftwo}) is satisfied for $\tilde{f}_t$. Moreover, by direct computation, we see that since (\ref{condf}) holds for $f_t$, it also holds for $\tilde{f}_t$. By (\ref{condftwo}), we have
$$\tilde{f}_t''(1-\sigma+i\nu) - \frac{1}{2\pi \nu} = f_t''(\sigma-i\nu) \gg \nu^{-1},$$
so that (\ref{condad}) holds for $\tilde{f}_t$.
\end{proof}
We deduce Theorem \ref{horothm} from Proposition \ref{ftrsf}. We first note that the exponential decay of $W_{it_f}$ restricts $n$ to the range $|n|\ll y^{-1}.$ Keeping in mind that the Fourier transform is negligible unless $n\asymp y^{-1},$ we only need to show that
$$\frac{1}{\beta-\alpha}\sum_{n\asymp y^{-1}} \frac{\r_f(n)}{y^{1/2}|n|^{1/2}} y^{1/2} \int_\alpha^\beta K_{1/y}\left(\frac{x}{y}\right) e(nx) V(x)\mathrm{d}x \rightarrow 0,$$
as $y\rightarrow 0$. However, by Fourier inversion, we have
\begin{align*}
y^{-1/2}\int_\alpha^\beta K_{1/y}\left(\frac{x}{y}\right) e(nx)V(x)\mathrm{d}x &= \int_\alpha^\beta \int_\R \hat{K}_{1/y}(z) e(zx)e(nx) \mathrm{d}z\mathrm{d}x\\
&= \int_\R \hat{K}_{1/y}(z+n) \int_\alpha^\beta e(zx) \mathrm{d}x\mathrm{d}z\\
&= \frac{1}{2\pi i}\int_\R \hat{K}_{1/y}(z+n) \frac{e(\beta z)-e(\alpha z)}{z} \mathrm{d}z.
\end{align*}
Now by Proposition \ref{ftrsf} and the properties of analytic trace functions, we must have $z+n \asymp y^{-1}$, for otherwise $\tilde{K}_{1/y}(z+n)$ is negligible. We may thus apply Theorem \ref{thm1} to conclude that
\begin{align*}
\frac{1}{\beta-\alpha}\sum_{n\asymp y^{-1}} \frac{\r_f(n)}{y^{1/2}|n|^{1/2}} y^{1/2} \int_\alpha^\beta K_{1/y}\left(\frac{x}{y}\right) e(nx) V(x)\mathrm{d}x \ll \frac{y^{1/8 -\epsilon}}{\beta-\alpha}, 
\end{align*}
proving Theorem \ref{horothm}.

\bibliography{refs}
\bibliographystyle{plain}

\end{document}